\newtheorem{thm}{Theorem}[section]
\newtheorem*{thm*}{Theorem}
\newtheorem{lem}[thm]{Lemma}
\newtheorem{fact}[thm]{Fact}
\newtheorem{prop}[thm]{Proposition}
\newtheorem*{prop*}{Proposition}
\newtheorem{cor}[thm]{Corollary}
\newtheorem*{cor*}{Corollary}
\theoremstyle{definition}
\newtheorem{defn}[thm]{Definition}
\newtheorem*{defn*}{Definition}
\newtheorem{remark}[thm]{Remark}
\newtheorem{remarks}[thm]{Remarks}
\newtheorem{question}[thm]{Question}
\newtheorem{examples}[thm]{Examples}
\newtheorem{claim}[thm]{Claim}
\newtheorem*{question*}{Question}
\newtheorem*{Pquestion*}{Popa's question}
\newtheorem*{conv*}{Convention}
\newcommand{\dminus}{ 
\buildrel\textstyle\ .\over{\hbox{ 
\vrule height3pt depth0pt width0pt}{\smash-} 
}}
\def\bb{\mathbb}
\def\bb{\mathbb}
\def\cal{\mathcal}
\def\u{\mathsf 1}
\newcommand{\cstar}{$\mathrm{C}^*$}
\def\dotminussym#1#2{%
  \setbox0=\hbox{$\m@th#1-$}%
  \kern.5\wd0%
  \hbox to 0pt{\hss\hbox{$\m@th#1-$}\hss}%
  \raise.6\ht0\hbox to 0pt{\hss$\m@th#1.$\hss}%
  \kern.5\wd0}
\newcommand{\dpr}{^{\prime\prime}}
\DeclareMathOperator{\alg}{alg}
\DeclareMathOperator{\id}{id}
\DeclareMathOperator{\op}{op}
\DeclareMathOperator{\Ad}{Ad}
\DeclareMathOperator{\Aut}{Aut}
\DeclareMathOperator{\Inn}{Inn}
\DeclareMathOperator{\Tr}{Tr}
\DeclareMathOperator{\Ball}{Ball}
\DeclareMathOperator{\core}{c}
\DeclareMathOperator{\diag}{diag}
\newcommand\bN{{\mathbb N}}
\newcommand\bR{{\mathbb R}}
\newcommand\bZ{{\mathbb Z}}
\newcommand\bC{{\mathbb C}}
\def \Th{\operatorname{Th}}
\def \R{\mathcal R}
\def \u{\mathcal U}
\def \v{\mathcal V}
\def\l@subsection{\@tocline{2}{0pt}{2.5pc}{5pc}{}}
\def\l@subsubsection{\@tocline{2}{0pt}{5pc}{7.5pc}{}}
\begin{document}


\title{Existentially closed W*-probability spaces}

\author{Isaac Goldbring}
\address{Department of Mathematics\\University of California, Irvine, 340 Rowland Hall (Bldg.\# 400),
Irvine, CA 92697-3875}
\email{isaac@math.uci.edu}
\urladdr{http://www.math.uci.edu/~isaac}
\thanks{Goldbring was partially supported by NSF grant DMS-2054477.}

\author{Cyril Houdayer}
\address{Universit\'e Paris-Saclay, Institut Universitaire de France, CNRS, Laboratoire de math\'ematiques d'Orsay, 91405 Orsay, FRANCE} \email{cyril.houdayer@universite-paris-saclay.fr}
\thanks{Houdayer is supported by Institut Universitaire de France.}
\urladdr{https://cyrilhoudayer.com}

\maketitle

\begin{abstract}
We study several model-theoretic aspects of W$^*$-probability spaces, that is, $\sigma$-finite von Neumann algebras equipped with a faithful normal state.  We first study the existentially closed W$^*$-spaces and prove several structural results about such spaces, including that they are type III$_1$ factors that tensorially absorb the Araki-Woods factor $R_\infty$.  We also study the existentially closed objects in the restricted class of W$^*$-probability spaces with Kirchberg's QWEP property, proving that $R_\infty$ itself is such an existentially closed space in this class.  Our results about existentially closed probability spaces imply that the class of type III$_1$ factors forms a $\forall_2$-axiomatizable class.  We show that for $\lambda\in (0,1)$, the class of III$_\lambda$ factors is not $\forall_2$-axiomatizable but is $\forall_3$-axiomatizable; this latter result uses a version of Keisler's Sandwich theorem adapted to continuous logic.  Finally, we discuss some results around elementary equivalence of III$_\lambda$ factors.  Using a result of Boutonnet, Chifan, and Ioana, we show that, for any $\lambda\in (0,1)$, there is a family of pairwise non-elementarily equivalent III$_\lambda$ factors of size continuum.  While we cannot prove the same result for III$_1$ factors, we show that there are at least three pairwise non-elementarily equivalent III$_1$ factors by showing that the class of full factors is preserved under elementary equivalence.     
\end{abstract}

\tableofcontents
\section{Introduction}

The model-theoretic study of von Neumann algebras began in earnest with the series of papers \cite{FHS13}, \cite{FHS141}, and \cite{FHS142} by Farah, Hart and Sherman.  There, a theory in a (continuous) first-order language was described for which there was an equivalence of categories between the models of the theory and the category of tracial von Neumann algebras for which the model-theoretic ultraproduct construction corresponded to the tracial ultraproduct construction.   In the time since these papers appeared, there has been a very interesting interplay between model-theoretic and operator-algebraic techniques; recent examples of applications of model-theoretic techniques to problems about tracial von Neumann algebras can be found in the papers \cite{AGK20}, \cite{Go20}, and \cite{GH20}.

That a model-theoretic study of a wider class of von Neumann algebras (that is, beyond the finite realm) should be possible is hinted at by the existence of the Ocneanu ultraproduct construction, which allows one to take the ultraproduct of a family of \emph{W$^*$-probability spaces}, that is, $\sigma$-finite von Neumann algebras equipped with a faithful, normal state. (The relevant facts about W$^*$-probability spaces needed in this paper are summarized in Section 2.) Motivated by the Ocneanu ultraproduct, Farah and Hart, in an unpublished work, observed that the category of $\sigma$-finite von Neumann algebras forms a so-called \emph{compact abstract theory} (or CAT), which is a logical framework predating the current incarnation of continuous first-order logic.  The first person to axiomatize (in the sense of the previous paragraph) W$^*$-probability spaces in an appropriate continuous first-order language was Dabrowski \cite{Da19}; in particular, the model-theoretic ultraproduct construction for this class corresponds to the Ocneanu ultraproduct construction.  Dabrowski's axiomatization is quite technical and uses a fair amount of modular theory.  A simpler (but less descriptive) axiomatization was given by  Hart, Sinclair, and the first author in \cite{GHS18}.

Now that the class of W$^*$-probability spaces has been established as an axiomatizable class in an appropriate language, it is only natural to begin a thorough model-theoretic study of this class.  In this paper, we initiate this endeavor.  Our main focus will be on studying the class of \emph{existentially closed W$^*$-probability spaces}.  The notion of an existentially closed structure is the model-theoretic generalization of the notion of an algebraically closed field.  Roughly speaking, given structures $\cal M$ and $\cal N$ (in some language) for which $\cal M$ is a substructure of $\cal N$, we say that $\cal M$ is an existentially closed substructure of $\cal N$ (or that $\cal M$ is existentially closed in $\cal N$) if any existential fact about some elements of $\cal M$ which is true in $\cal N$ is also true in $\cal M$.  Considering that we are in the setting of continuous logic, truth in $\cal M$ is really approximate truth.  Thankfully, this syntactic definition of existentially closed substructure can be given a semantic reformulation that aligns much more with the operator-algebraic perspective:  $\cal M$ is an existentially closed substructure of $\cal N$ if and only if there is an embedding of $\cal N$ into an ultrapower $\cal M^\u$ of $\cal M$ for which the restriction of the embedding to $\cal M$ is the usual diagonal embedding of $\cal M$ into its ultrapower.  If $\cal M$ belongs to some class $\frak C$ of structures, we say that the structure $\cal M$ is existentially closed for $\frak C$ if it is existentially closed in all extensions belonging to $\frak C$.

The Robinsonian school of model theory encourages one to understand the class of models of a theory by understanding its class of existentiallly closed models, that is, the models of the theory which are existentially closed for the class of models of the theory.  The study of existentially closed tracial von Neumann algebras was carried out in the papers \cite{GHS13}, \cite{FGHS16}, \cite{Go18}, and \cite{Go21}.  In Section 3 of this paper, we carry out a systematic study of the class of W$^*$-probability spaces.  In Subsection 3.1, we describe some properties of a W$^*$-probability space that are inherited by an existentially closed substructure.  In particular, we show that if the W$^*$-probability space $(M,\varphi)$ is existentially closed in $(N,\psi)$ and $N$ is a type III$_1$ factor, then so is $M$.  This aids us in our study of the class of existentially closed W$^*$-probability spaces in Subsection 3.2, where we show that any such W$^*$-probability space is necessarily a type III$_1$ factor, which generalizes the result that an existentially closed tracial von Neumann algebra is necessarily a type II$_1$ factor.  Other facts about existentially closed tracial von Neumann algebras are generalized to this setting, such as they tensorially absorb the Araki-Woods factor $R_\infty$ (generalizing the fact that any existentially closed II$_1$ factor tensorially absorbs the hyperfinite II$_1$ factor $R$) and that every automorphism of an existentially closed W$^*$-probability space is approximately inner.  

If one restricts one's attention to the class of tracial von Neumann algebras that satisfy the conclusion of the Connes Embedding Problem, that is, that admit a trace-preserving embedding into the tracial ultrapower of the hyperfinite II$_1$ factor $R$, then one obtains the fact that $R$ itself is an existentially closed element of this class.  Since a tracial von Neumann algebra embeds into the tracial ultrapower of $R$ precisely when it has Kirchberg's QWEP property \cite{Ki93}, it is natural to restrict attention to the class of QWEP W$^*$-probability spaces.  In fact, a result of Ando, Haagerup, and Winslow \cite{AHW13} shows that this class of W$^*$-probability spaces can be characterized by admitting an embedding (with expectation) into the Ocneanu ultrapower of the Araki-Woods factor $R_\infty$, or, as we show below, in model-theoretic terms, is a model of the universal theory of $R_\infty$.  We show that $R_\infty$ is an existentially closed QWEP W$^*$-probability space and prove a few further results about this class of structures.  Subsection 3.4 is concerned with the technique of building W$^*$-probability spaces by games, which is a technique (first introduced in the continuous setting in \cite{Go21}) that is very useful when trying to build e.c.\ objects with extra properties.  The section concludes with Subsection 3.5, which contains some open questions about existentially closed W$^*$-probability spaces.

Section 4 contains two further collections of results about the model theory of W$^*$-probability spaces.  The first collection of results concerns the axiomatizability of various classes of type III factors.  It is shown in \cite{AHW13} that, given any $\lambda\in (0,1]$, the Ocneanu ultraproduct of a family of type III$_\lambda$ factors is again a type III$_\lambda$ factor and a factor is of type III$_\lambda$ if its Ocneanu ultrapower is as well.  Model-theoretically, this implies that the class of W$^*$-probability spaces whose underlying von Neumann algebra is a type III$_\lambda$ factor forms an axiomatizable class.  In Section 4.1, we show that the results of our analysis of existentially closed W$^*$-probability spaces implies that the class of type III$_1$ W$^*$-probability spaces necessarily has a $\forall_2$-axiomatization, that is, has a set of axioms of the form $\sup_x\inf_y\theta(x,y)$, where $x$ and $y$ are finite tuples of variables and $\theta$ is a quantifier-free formula.  We show that for a fixed $\lambda\in (0,1)$, the class of III$_\lambda$ factors cannot be axiomatized using two quantifiers but can be axiomatized using three quantifiers.  In none of these cases do we provide explicit axiomatizations but instead use a ``soft'' criterion for establishing the existence of such axiomatizations given by the Keisler Sandwich theorem.

In Subsection 4.2, we study the notion of elementary equivalence of W$^*$-probability spaces.  Two W$^*$-probability spaces are elementarily equivalent if they cannot be distinguished using a first-order sentence.  Using the Keisler-Shelah theorem, this can be given a semantic reformulation, namely that they have isomorphic ultrapowers.  We first show how the result of Boutonnet, Chifan, and Ioana \cite{BCI15} stating that McDuff's family \cite{McD69} of pairwise non-isomorphic separable II$_1$ factors are in fact pairwise non-elementarily equivalent can be used to show that there exist continuum many non-elementarily equivalent separable type III$_\lambda$ W$^*$-probability spaces for any $\lambda\in (0,1)$.  We are currently unable to extend this result to include $\lambda=1$ but are able to identify at least three non-elementarily equivalent separable type III$_1$ W$^*$-probability spaces.  In order to accomplish this, we show that the class of \emph{non-full} III$_\lambda$ factors (for fixed $\lambda\in (0,1]$) is an axiomatizable class, generalizing the theorem of Farah, Hart, and Sherman \cite{FHS142} that the class of type II$_1$ factors with property Gamma is axiomatizable.  This subsection includes a number of interesting open questions about the study of elementary equivalence of W$^*$-probability spaces.

There are two appendices in this paper.  The first appendix contains results about embedding AFD W$^*$-probability spaces into ultraproducts that are needed in various portions of the paper; most of the results in this appendix are unpublished results of Ando and the second author.  The second appendix concerns Keisler's Sandwich Theorem, which is the main model-theoretic tool needed in our axiomatization results appearing in Subsection 4.1.  Since the continuous logic version of this result has never appeared, we include a complete proof of the result here.  Moreover, we present the result using ultrapowers rather than arbitrary elementary extensions in the interest of the operator-algebraic audience. 

We have made every attempt to keep the model-theoretic prerequisites for this paper to a minimum and try to use ``semantic'' definitions and proofs whenever possible.  That being said, on a few occasions, we need to refer to basic model-theoretic terminology, such as elementary equivalence, elementary embedding, or first-order formula.  A short introduction aimed towards operator algebraists (albeit in the language of tracial von Neumann algebras) can be found in Subsections 2.1 and 2.3 of \cite{AGK20}.

\tableofcontents

\section{Preliminaries}

\subsection{Basic facts about W*-probability spaces}

For every von Neumann algebra $M$, we denote by $\|\cdot\|_\infty$ its uniform norm, by $M_\ast$ its predual, by $\mathcal Z(M)$ its center, by $\mathfrak S_{{\rm fn}}(M)$ the set of faithful normal states on $M$, by $\mathcal U(M)$ its unitary group, by $\Ball(M)$ its unit ball with respect to the uniform norm, by $\Aut(M)$ its automorphism group and by $(M, L^2(M), J, L^2(M)^+)$ its standard form (see \cite{Ha73}). Under the identification $M = (M_\ast)^\ast$, the ultraweak topology on $M$ coincides with the weak-$\ast$ topology on $(M_\ast)^*$. A linear map $\Phi : M \to N$ is said to be \emph{normal} if it is ultraweakly continuous.

A W$^*$-\emph{probability space} is a pair $(M, \varphi)$ that consists of a $\sigma$-finite von Neumann algebra $M$ endowed with a faithful normal state $\varphi \in \frak S_{fn}(M)$.  For every $x \in M$, write $\|x\|_\varphi = \varphi(x^*x)^{1/2}$ (resp.\ $\|x\|_\varphi^\sharp = \varphi(x^*x)^{1/2} + \varphi(xx^*)^{1/2}$). On uniformly bounded sets, the topology induced by the norm $\|\cdot\|_\varphi$ (resp.\ $\|\cdot\|_\varphi^\sharp$) coincides the strong (resp.\ $\ast$-strong) operator topology. We denote by $\sigma^\varphi$ the modular automorphism group associated with the state $\varphi$. By definition, the \emph{centralizer} $M_\varphi$ of the state $\varphi$ is the fixed point algebra of $(M, \sigma^\varphi)$. The \emph{continuous core} of $M$ with respect to $\varphi$ is the crossed product von Neumann algebra $\core_\varphi(M) = M \rtimes_{\sigma^\varphi} \bR$.  The natural inclusion $\pi_\varphi: M \to \core_\varphi(M)$ and the strongly continuous unitary representation $\lambda_\varphi: \bR \to \core_\varphi(M)$ satisfy the covariance relation
$$
  \lambda_\varphi(t) \pi_\varphi(x) \lambda_\varphi(t)^*
  =
  \pi_\varphi(\sigma^\varphi_t(x))
  \quad
  \text{ for all }
  x \in M \text{ and all } t \in \bR.
$$
Set $L_\varphi (\bR) = \lambda_\varphi(\bR)\dpr \subset \core_\varphi(M)$. There is a unique faithful normal conditional expectation $E_{L_\varphi (\bR)}: \core_{\varphi}(M) \to L_\varphi(\bR)$ satisfying $E_{L_\varphi (\bR)}(\pi_\varphi(x) \lambda_\varphi(t)) = \varphi(x) \lambda_\varphi(t)$. The faithful normal semifinite weight defined by $f \mapsto \int_{\bR} \exp(-s)f(s) \, {\rm d}s$ on $L^\infty(\bR)$ gives rise to a faithful normal semifinite weight $\Tr_\varphi$ on $L_\varphi(\bR)$ {\em via} the Fourier transform. The formula $\Tr_\varphi = \Tr_\varphi \circ E_{L_\varphi (\bR)}$ extends it to a faithful normal semifinite trace on $\core_\varphi(M)$. Define the \emph{dual action} $\theta^\varphi : \bR \curvearrowright \core_\varphi(M)$ by the formula 
$$
  \theta_s^\varphi(\pi_\varphi(x) \lambda_\varphi(t))
  =
  \exp(- {\rm i}st) \, \pi_\varphi(x) \lambda_\varphi(t)
  \quad
  \text{ for all }
  x \in M \text{ and all } s,t \in \bR.
$$
Then $\theta_\varphi^\varphi : \bR \curvearrowright \core_\varphi(\bR)$ is a trace-scaling action in the sense that $\Tr_\varphi \circ \theta^\varphi_s = \exp(-s) \Tr_\varphi$ for every $s \in \bR$.

Let $\psi \in \frak S_{fn}(M)$ be any other faithful normal state. By Connes'\ Radon--Nikodym cocycle theorem \cite[Th\'eor\`eme 1.2.1]{Co72} (see also \cite[Theorem VIII.3.3]{Ta03a}), there is a $\ast$-strongly continuous map $u : \bR \to \mathcal U(M) : t \mapsto u_t$ such that 
\begin{enumerate}
    \item $u_{s + t} = u_s \sigma^\varphi_s(u_t)$ for all $s, t \in \bR$,
    \item $\sigma_t^\psi(x) = u_t \sigma_t^\varphi(x) u_t^*$ for all $t \in \bR$ and all $x \in M$.
\end{enumerate}
Item $(1)$ says that $u : \bR \to \mathcal U(M)$ is a $1$-cocycle for $\sigma^\varphi$ while Item $(2)$ says that $\sigma^\varphi$ and $\sigma^\psi$ are cohomologous. Then the $\ast$-isomorphism $\Pi_{\varphi, \psi} : \core_\varphi(M) \to \core_\psi(M) : \pi_\varphi(x) u_t\lambda_\varphi(t) \mapsto \pi_\psi(x)  \lambda_\psi(t)$ satisfies $\Pi_{\varphi, \psi} \circ \pi_\varphi = \pi_\psi$, $\Pi_{\varphi, \psi} \circ \theta^\varphi = \theta^\psi \circ \Pi_{\varphi, \psi}$ and $\Tr_\psi \circ \Pi_{\varphi, \psi} = \Tr_\varphi$. Note however that $\Pi_{\varphi,\psi}$ does not map the subalgebra $L_\varphi(\bR) \subset \core_\varphi(M)$ onto the subalgebra $L_\psi(\bR) \subset \core_\psi(M)$. It follows that the triple $(\core_\varphi(M), \theta^\varphi, \Tr_\varphi)$ does not depend on the choice of the faithful normal state $\varphi \in \frak S_{fn}(M)$ and we simply denote it by $(\core(M), \theta, \Tr)$.

Assume now that $M$ is a factor. The restriction of $\theta$ to the center $\mathcal Z(\core(M))$ is called the \emph{flow of weights}. By factoriality of $M$, the flow of weights $\theta : \bR \curvearrowright \mathcal Z(\core(M))$ is ergodic. 
\begin{itemize}
    \item If $\theta : \bR \curvearrowright \mathcal Z(\core(M))$ corresponds to the translation action $\bR \curvearrowright \bR$, then $M$ is semifinite, that is, $M$ is of type I or II.
    \item If $\theta : \bR \curvearrowright \mathcal Z(\core(M))$ is periodic with period $T > 0$, then letting  $\lambda = \exp(-T)$, we say that $M$ is of type III$_\lambda$.
    \item If $\theta : \bR \curvearrowright \mathcal Z(\core(M))$ has no period, then we say that $M$ is of type III$_0$. 
    \item If $\theta : \bR \curvearrowright \mathcal Z(\core(M))$ is trivial, that is, $\mathcal Z(\core(M)) = \bC 1$, then we say that $M$ is of type III$_1$.
\end{itemize}

Next, we define Connes' $S$-invariant $S(M)$ as the intersection
$$S(M) = \bigcap_{\varphi \in \mathfrak S_{{\rm fn}} (M)} \sigma(\Delta_\varphi)$$
where $\sigma(\Delta_\varphi)$ is the spectrum of the modular operator $\Delta_\varphi$ associated with the faithful normal state $\varphi \in \mathfrak S_{{\rm fn}}(M)$. Then $S(M)\setminus \{0\}$ is a closed multiplicative subgroup of $\bR^*_+$ that completely determines the type of $M$. When $M$ is a type III factor, we have that:
\begin{itemize}
    \item $M$ is of type III$_0$ if and only if $S(M) = \{0, 1\}$;
    \item $M$ is of type III$_\lambda$ if and only if $S(M) =  \{0\} \cup \lambda^\bZ$, for $\lambda\in (0,1)$;
    \item $M$ is of type III$_1$ if and only if $S(M) = [0, +\infty)$.
\end{itemize}

We also define Connes' $T$-invariant $T(M)$ as the set of all $t \in \bR$ for which $\sigma_t^\varphi$ is an inner automorphism. By Connes' Radon--Nikodym cocycle theorem, the above definition does not depend on the choice of the faithful normal state $\varphi \in \frak S_{fn}(M)$. Note that $T(M)$ is a subgroup of $\bR$. In case $M$ is not of type III$_0$, then $T(M)$ is a closed subgroup of $\bR$ that completely determines the type of $M$. Indeed, we have that:
\begin{itemize}
    \item $M$ is semifinite if and only if $T(M) = \bR$;
    \item $M$ is of type III$_\lambda$ if and only if $T(M) = \log(\lambda) \bZ$, for $\lambda\in (0,1)$;
    \item $M$ is of type III$_1$ if and only if $T(M) = \{0\}$.
\end{itemize}   

We refer to \cite{Co72, Ta03a} for further details regarding the structure of type III factors. 

Throughout this paper, for $\lambda\in (0,1)$, $(R_{\lambda}, \varphi_{\lambda})$ denotes the Powers factor of type ${\rm III}_{\lambda}$ endowed with its (unique) $\frac{2 \pi}{|\log(\lambda)|}$-periodic faithful normal state. By definition, we have
$$(R_{\lambda}, \varphi_{\lambda}) \cong ( M_2(\bC), \omega_{\lambda})^{\otimes \bb{N}}$$
where $\omega_{\lambda} : M_2(\bC) \to \bb{C}$ is defined by 
$$\omega_{\lambda}\left( \begin{pmatrix}
x_{11} & x_{12} \\
x_{21} & x_{22}
\end{pmatrix}\right) = \frac{\lambda}{1 + \lambda} x_{11} + \frac{1}{1 + \lambda} x_{22}.$$
By Connes' result \cite{Co75b} (see also \cite[Theorem XVIII.1.1]{Ta03b}), $R_{\lambda_i}$ is the unique AFD factor of type ${\rm III}_{\lambda_i}$.  We also let $R_\infty$ denote the Araki-Woods factor.  Combining Connes'\ result \cite{Co85} and Haagerup's result \cite{Ha85} (see also \cite[Theorem XVIII.4.16]{Ta03b}), $R_{\infty}$ is the unique AFD factor of type ${\rm III_1}$ and moreover we have $R_\infty \cong R_{\lambda_1} \otimes R_{\lambda_2}$ whenever $\log(\lambda_1)/\log(\lambda_2)$ is irrational.

We next clarify what we mean by an inclusion of W$^*$-probability spaces.

\begin{defn}
For W$^*$-probability spaces $(M, \varphi)$ and $(N, \psi)$, we say that $(M, \varphi)$ {\em embeds into} $(N, \psi)$, denoted $(M, \varphi) \hookrightarrow (N, \psi)$, if there exist a unital normal $\ast$-embedding $\iota : M \to N$ such that $\psi \circ \iota = \varphi$ and a faithful normal conditional expectation $E : N \to \iota(M)$ such that $\varphi \circ \iota^{-1} \circ E = \psi$.
\end{defn}

In what follows, we identify $M$ with $\iota(M)$, regard $M \subseteq N$ as a von Neumann subalgebra, and assume that $\iota : M \to N$ is simply given by $\iota(x):=x$. In that case, we say that $(M, \varphi) \subseteq (N, \psi)$ is an {\em inclusion} of W$^*$-probability spaces. By \cite[Theorem IX.4.2]{Ta03a}, the following assertions are equivalent:
\begin{enumerate}
    \item $(M, \varphi) \subseteq (N, \psi)$.
    \item The modular automorphism group $\sigma^\psi$ leaves the subalgebra $M \subseteq N$ globally invariant, $\psi |_M = \varphi$, and $\sigma^{\varphi} = \sigma^\psi |_M$.
\end{enumerate}
In that case, $E : N \to M$ is the unique faithful normal conditional expectation such that $\varphi \circ E = \psi$. Moreover, we have
\begin{align*}
M_\varphi &= \{x \in M \mid \forall t \in {\bb R}, \sigma_t^\varphi(x) = x\} \\
&= \{x \in M \mid \forall t \in {\bb R}, \sigma_t^\psi(x) = x\} \quad (\text{since } \sigma^{\varphi} = \sigma^\psi |_M)\\
&\subseteq N_\psi.
\end{align*}

\subsection{Ocneanu ultraproducts of W*-probability spaces}

Let $I$ be any nonempty directed set and $\u$ any nonprincipal ultrafilter on $I$.  Let $(M_i, \varphi_i)_{i\in I}$ be any family of W$^*$-probability spaces. Following \cite{AH12}, define
\begin{align*}
\ell^\infty(I, M_i) &= \left\{ (x_i)_i \in \prod_{i \in I} M_i \mid \sup_{i \in I} \|x_i\|_\infty < +\infty\right\} \\
\mathfrak I_{\u} &= \left\{ (x_i)_i \in \ell^\infty(I, M_i) \mid \lim_{i \to \u} \|x_i\|_{\varphi_i}^\sharp = 0 \right\} \\
\mathfrak M^{\u} &= \left\{ (x_i)_i \in \ell^\infty(I, M_i) \mid  (x_i)_i \, \mathfrak I_{\u} \subset \mathfrak I_{\u} \quad \text{and} \quad \mathfrak I_{\u} \, (x_i)_i \subset \mathfrak I_{\u}\right\}.
\end{align*}
Observe that $\mathfrak I_{\u} \subseteq \mathfrak M^{\u}$. The {\em multiplier algebra} $\mathfrak M^{\u}$ is a C$^*$-algebra and $\mathfrak I_{\u} \subset \mathfrak M^{\u}$ is a norm closed two-sided ideal. Following \cite[\S 5.1]{Oc85}, we define the {\em ultraproduct von Neumann algebra} by $\prod_\u (M_i,\varphi_i):=(M_i, \varphi_i)^{\u} := \mathfrak M^{\u} / \mathfrak I_{\u}$. We note that the proof given in \cite[5.1]{Oc85} for the case when $I = \bN$ and $\u \in \beta(\bN) \setminus \bN$ applies {\em mutatis mutandis}. We denote the image of $(x_i)_i \in \mathfrak M^{\u}$ in $(M_i,\varphi_i)^\u$ by $(x_i)^{\u}$.  

We now focus on the particular case when $(M_i, \varphi_i) = (M, \varphi)$ for some fixed W$^*$-probability space $(M, \varphi)$. In that case, we write $(M, \varphi)^\u = (M^\u, \varphi^\u)$ for the ultraprower of $(M, \varphi)$.
For every $x \in M$, the constant sequence $(x)_i$ lies in the multiplier algebra $\mathfrak M^{\u}$. We then identify $M$ with $(M + \mathfrak I_{\u})/ \mathfrak I_{\u}$ and regard $M \subset M^{\u}$ as a von Neumann subalgebra. The map $E_{\u} : M^{\u} \to M $ given by $E_\u((x_i)^{\u}) = \sigma \text{-weak} \lim_{i \to {\u}} x_i$ is a faithful normal conditional expectation. Moreover, we have $\varphi \circ E_\u = \varphi^\u$. Thus, $(M, \varphi) \subseteq (M, \varphi)^\u$ is an inclusion of W$^*$-probability spaces. Following \cite[\S 2]{Co74a}, set
$$\mathfrak M_{\u}  = \left\{ (x_i)_i \in \ell^\infty(I, M) \mid \lim_{i \to \u} \|x_i \zeta - \zeta x_i\|  = 0, \forall \zeta \in L^2(M) \right\}.$$
Define the {\em asymptotic centralizer} von Neumann algebra by $M_{\u} = \mathfrak M_{\u} / \mathfrak I_{\u}$, which is a von Neumann subalgebra of $M^{\u}$. By \cite[Proposition 2.8]{Co74a} (see also \cite[Proposition 4.35]{AH12}), we have $M_\u = (M' \cap M^\u)_{\psi^\u}$ for every faithful normal state $\psi \in \frak S_{fn}(M)$.

Now let $(Q, \psi) \subseteq (M, \varphi)$ be any inclusion of W$^*$-probability spaces and denote by $E : M \to Q$ the unique faithful normal conditional expectation such that $\psi \circ E = \varphi$. We have $\ell^\infty(I, Q) \subset \ell^\infty(I, M)$, $\mathfrak I_{\u}(Q) \subseteq \mathfrak I_{\u}(M)$ and $\mathfrak M^{\u}(Q) \subseteq \mathfrak M^{\u}(M)$. We then identify $Q^{\u} = \mathfrak M^{\u}(Q) / \mathfrak I_{\u}(Q)$ with $(\mathfrak M^{\u}(Q) + \mathfrak I_{\u}(M)) / \mathfrak I_{\u}(M)$ and regard $Q^{\u} \subset M^{\u}$ as a von Neumann subalgebra. Observe that the norm $\|\cdot\|_{\psi^{\u}}$ on $Q^{\u}$ is the restriction of the norm $\|\cdot\|_{\varphi^{\u}}$ to $Q^{\u}$. Observe moreover that $(E(x_i))_i \in \mathfrak I_{\u}(Q)$ for all $(x_i)_i \in \mathfrak I_{\u}(M)$ and $(E(x_i))_i \in \mathfrak M^{\u}(Q)$ for all $(x_i)_i \in \mathfrak M^{\u}(M)$. Therefore, the mapping $E^\u : M^{\u}\to Q^{\u}$ given by $E^\u((x_i)^{\u})= (E(x_i))^{\u}$ is a well-defined conditional expectation satisfying $\psi^{\u} \circ E^\u= \varphi^{\u}$. Thus, $(Q, \psi)^\u \subseteq (M, \varphi)^\u$ is an inclusion of W$^*$-probability spaces.

\subsection{Automorphism group, fullness and w-spectral gap}

Let $M$ be any $\sigma$-finite von Neumann algebra. Recall that for every $\varphi \in (M_\ast)^+$, there is a unique vector $\xi_\varphi \in L^2(M)^+$ such that $\varphi(x) = \langle x \xi_\varphi, \xi_\varphi\rangle$ for all $x \in M$. The group $\Aut(M)$ of all automorphisms of $M$ acts on $M_*$ by $\theta(\varphi)=\varphi \circ \theta^{-1}$ for all $\theta \in \Aut(M)$ and $\varphi \in M_*$. Following \cite{Co74a, Ha73}, the $u$-topology on $\Aut(M)$ is the topology of pointwise norm convergence on $M_*$, meaning that a net $(\theta_i)_{i \in I}$ in $\Aut(M)$ converges to the identity $\id_M$ in the $u$-topology if and only if for all $\varphi \in M_*$ we have $\| \theta_i(\varphi) -\varphi \| \to 0$ as $i \to \infty$. This turns $\Aut(M)$ into a complete topological group. When $M_*$ is separable, $\Aut(M)$ is Polish. Since the standard form of $M$ is unique, the group $\Aut(M)$ also acts naturally on $L^2(M)$ and we have $\theta(\xi_\varphi)=\xi_{\theta(\varphi)}$ for every $\varphi \in M_*^+$. It follows that the $u$-topology coincides with the topology of pointwise norm convergence on $L^2(M)$.

We introduce the following terminology:

\begin{defn}
We say that $\theta \in \Aut(M)$ is
\begin{itemize}
    \item {\em approximately inner} if there exists a net $(u_i)_{i \in I}$ in $\mathcal U(M)$ such that $\Ad(u_i) \to \theta$ in the $u$-topology.
    \item {\em weakly inner} in the sense of \cite{Ma18} if the automorphism $\theta \odot \id$ of $M \odot M^{\op}$ extends to an automorphism of the C$^*$-algebra C$^*_{\lambda \cdot \rho}(M)$ generated by the standard representation $\lambda \cdot \rho : M \odot M^{\op} \to  \mathbf B(L^2(M))$.
\end{itemize}
\end{defn}

Any approximately inner automorphism is weakly inner (see \cite[Theorem 4.1]{AHHM18}) but the converse is not true (see \cite{Ma18}).

 We say that a $\sigma$-finite factor $M$ is {\em full} if whenever $(u_i)_{i \in I}$ is a net in $\mathcal U(Q)$ such that $\Ad(u_i) \to \id_M$ in the $u$-topology, there exists a bounded net $(\lambda_i)_{i \in I}$ in $\bb{C}$ such that $u_i - \lambda_i 1 \to 0$ $\ast$-strongly. When $M$ is tracial, $M$ is full if and only if $M$ does not have property Gamma of Murray and von Neumann. It is known that a full factor is never of type $III_0$ (see \cite[Theorem 2.12]{Co74a}). By \cite[Corollary 3.7]{HMV16}, if $M$ is a full factor, then for any nonprincipal ultrafilter $\u$ on any directed set $I$, we have $M' \cap M^\u = \bb{C}1$. (The converse is also true and follows readily from the definitions.)  It follows from the classification of amenable factors \cite{Co75b, Co85, Ha85} that any factor that is amenable and full is necessarily of type I.

 We say that an inclusion of von Neumann algebras $Q \subset M$ is {\em with expectation} if there exists a faithful normal conditional expectation $E : M \to Q$. Moreover, we say that $Q$ has {\em w-spectral gap} in $M$ if, for any nonprincipal ultrafilter $\u$ on any set $I$, we have $Q' \cap M^\u = (Q' \cap M)^\u$. By \cite[Theorem 4.4]{HMV16}, for any $\sigma$-finite full factor $M$ and any $\sigma$-finite von Neumann algebra $N$, $M$ has w-spectral gap in $M \otimes N$. 
 
 \subsection{W*-probability spaces as metric structures}
 
As mentioned in the introduction, Dabrowski \cite{Da19} introduced a first-order language  for axiomatizing W$^*$-probability spaces.  In this language, the sorts are given by operator norm balls centered at the origin of various natural number radii.  The metric on each ball is given by the norm $\|\cdot\|_\varphi^*$, a relative of the norm $\|\cdot\|_\varphi^\#$ used above, which has the advantage that the state is Lipschitz continuous on each sort.  While one has the natural symbols for scalar multiplication, addition, and adjoint, multiplication is not uniformly continuous on each sort and thus Dabrowski uses ``smeared'' multiplication maps defined using modular theory.  Finally, he includes function symbols for the modular automorphism group (for rational times to keep the language countable) as well as some auxiliary symbols needed to make the axiomatization work.  In this language, it is possible to axiomatize a class of structures which, as a category, is equivalent to the category of W$^*$-probability spaces with inclusions as defined above and for which the model-theoretic ultraproduct corresponds to the Ocneanu ultraproduct.

While quite explicit, Dabrowski's language is very technical and cumbersome.  An alternate axiomatization is given by Hart, Sinclair, and the first author in \cite{GHS18}.  There, the sorts are given by vectors of operator norm at most $N$ that are \emph{$K$-bounded} (in a sense akin to that used in bimodule theory).  On these sorts, the metric is induced by the norm $\|\cdot\|_\varphi^\#$ and then all symbols (including multiplication) are naturally uniformly continuous.  A much simpler axiomatization in this language can be given which once again yields an equivalence of categories capturing the Ocneanu ultraproduct except that an inclusion of models of this theory only corresponds to a normal, state-preserving embedding of von Neumann algebras.  In order to recover the correspondence with the above notion of inclusion of W$^*$-probability spaces, one must add the modular automorphism group to the language, which is a harmless move as the modular automorphism group is definable in this language, as shown in \cite{GHS18}.  However, the definability of the automorphism group is given by an abstract argument using the Beth Definability Theorem and thus this axiomatization lacks the concrete flavor of Dabrowski's axiomatization. 

In this paper, the specific first-order framework for studying W$^*$-probability spaces is not important and the reader can feel free to keep either of these two approaches in mind.

\section{Existentially closed W$^*$-probability spaces}

\subsection{Relative existential closedness}

We begin this subsection by officially defining what it means for a W$^*$-probability space to be existentially closed in another:

\begin{defn}
If $(M,\varphi)\subseteq (N,\psi)$, we say that $(M,\varphi)$ is \emph{existentially closed} (e.c.) in $(N,\psi)$ if and only if 
$$(M, \varphi) \subseteq (N, \psi) \subseteq (M, \varphi)^{\u}$$ so that $(M, \varphi) \subseteq (M, \varphi)^{\u}$ is the diagonal inclusion.
\end{defn}

\begin{remarks}

\

\begin{enumerate}
    \item As discussed in the introduction, the previous definition is not the usual ``syntactic'' definition of existential closedness.  Stated in syntactic terms, $(M,\varphi)$ is e.c. in $(N,\psi)$ if and only if:  for every existential formula $\theta$ with parameters from $M$, we have $\theta^{(M,\varphi)}=\theta^{(N,\psi)}$.  The above definition is convenient for operator algebraists who do not wish to understand the precise definition of existential sentence.
    \item The previous definition is rather vague as to the nature of the ultrafilter $\u$.  If $N$ in the previous definition is a separably acting von Neumann algebra, then $\u$ can be taken to be any nonprincipal ultrafilter on $\bb N$.  For general $N$, one needs to take a particular kind of ultrafilter (known as a \emph{good ultrafilter}) on some potentially large index set (depending on the density character of the metric associated to $\|\cdot\|_\psi$).
    \end{enumerate}
\end{remarks}

The following flexibility result allows us to change states when dealing with relatively existentially closed W$^*$-probability spaces.

\begin{prop}\label{prop:flexibility}
Suppose that $(M,\varphi)$ is e.c.\! in $(N,\psi)$.  Then for any faithful normal state $\rho$ on $M$, we have that $(M,\rho)$ is e.c.\! in $(N,\rho\circ E)$, where $E : N \to M$ is the unique faithful normal conditional expectation such that $\varphi\circ E=\psi$. 
\end{prop}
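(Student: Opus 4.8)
The plan is to unwind the semantic definition of existential closedness directly at the level of Ocneanu ultrapowers, exploiting that the \emph{von Neumann algebra} underlying $(M,\varphi)^{\u}$ does not depend on $\varphi$. By hypothesis we are given inclusions of W$^*$-probability spaces $(M,\varphi)\subseteq(N,\psi)\subseteq(M,\varphi)^{\u}=(M^{\u},\varphi^{\u})$ whose composite is the diagonal inclusion. Write $E\colon N\to M$ for the conditional expectation in the statement (so $\varphi\circ E=\psi$), $F\colon M^{\u}\to N$ for the faithful normal conditional expectation witnessing the second inclusion (so $\psi\circ F=\varphi^{\u}$), and $E_{\u}\colon M^{\u}\to M$, $(x_i)^{\u}\mapsto\sigma\text{-weak-}\lim_{i\to\u}x_i$, for the canonical expectation, which satisfies $\varphi\circ E_{\u}=\varphi^{\u}$. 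Since $\varphi\circ(E\circ F)=\psi\circ F=\varphi^{\u}=\varphi\circ E_{\u}$ and conditional expectations preserving a given faithful normal state are unique, the first thing I would record is $E\circ F=E_{\u}$.

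Next I would isolate the key point: the ideal $\mathfrak I_{\u}$, hence the multiplier algebra $\mathfrak M^{\u}$, hence the von Neumann algebra $M^{\u}=\mathfrak M^{\u}/\mathfrak I_{\u}$ and the expectation $E_{\u}$, are the same whether one works with $\varphi$ or with $\rho$; only the functional $\varphi^{\u}$ (vs.\ $\rho^{\u}$) changes. To see this it suffices to check that for a bounded sequence $(x_i)_i$ one has $\lim_{i\to\u}\|x_i\|^{\sharp}_{\varphi}=0$ iff $\lim_{i\to\u}\|x_i\|^{\sharp}_{\rho}=0$: letting $\xi_{\varphi},\xi_{\rho}\in L^2(M)^{+}$ implement $\varphi,\rho$ and using that $\xi_{\varphi}$ is cyclic for $M'$, given $\e>0$ pick $a'\in M'$ with $\|\xi_{\rho}-a'\xi_{\varphi}\|<\e$, so that $\|x\xi_{\rho}\|\le\|x\|_{\infty}\e+\|a'\|\,\|x\xi_{\varphi}\|$ for every $x\in M$, and symmetrically. (This is recorded in \cite{AH12}.) Consequently $(M,\rho)^{\u}=(M^{\u},\rho^{\u})$ with $\rho^{\u}((x_i)^{\u})=\lim_{i\to\u}\rho(x_i)$, and since $\rho$ is normal one gets $\rho\circ E_{\u}=\rho^{\u}$.

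With this, the conclusion assembles from three inclusions of W$^*$-probability spaces. By \cite[Theorem IX.4.2]{Ta03a} applied to the faithful normal conditional expectation $E\colon N\to M$, the pair $(M,\rho)\subseteq(N,\rho\circ E)$ is an inclusion (with associated expectation $E$). Applying the same theorem to $F\colon M^{\u}\to N$, together with $(\rho\circ E)\circ F=\rho\circ(E\circ F)=\rho\circ E_{\u}=\rho^{\u}$, gives that $(N,\rho\circ E)\subseteq(M^{\u},\rho^{\u})=(M,\rho)^{\u}$ is an inclusion. Finally the underlying von Neumann-algebra inclusions $M\subseteq N\subseteq M^{\u}$ are literally the ones from the hypothesis, so their composite is again the diagonal embedding $x\mapsto(x)^{\u}$; thus the chain $(M,\rho)\subseteq(N,\rho\circ E)\subseteq(M,\rho)^{\u}$ exhibits the diagonal inclusion, which is exactly the assertion that $(M,\rho)$ is e.c.\ in $(N,\rho\circ E)$.

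The only step here that is not formal manipulation with uniqueness of conditional expectations and Takesaki's theorem is the state-independence of $M^{\u}$ and $E_{\u}$ in the second paragraph — that is where I expect to have to be careful — but as indicated it is short, and in any case available in \cite{AH12}.
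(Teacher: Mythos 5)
Your proof is correct and follows essentially the same route as the paper's: one records $E\circ F=E_{\u}$ by uniqueness of state-preserving expectations and then reuses the same inclusions and the same expectations $E$ and $F$ with $\rho$ in place of $\varphi$, using $(\rho\circ E)\circ F=\rho\circ E_{\u}=\rho^{\u}$. The only difference is that you spell out the state-independence of the Ocneanu ultrapower and of $E_{\u}$, which the paper uses implicitly (and which is indeed available in \cite{AH12}); that verification is accurate and harmless.
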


\begin{proof}
By assumption, we have $(M, \varphi) \subseteq (N, \psi) \subseteq (M, \varphi)^{\u}$ so that $(M, \varphi) \subseteq (M, \varphi)^{\u}$ is the diagonal inclusion. Denote by $F : M^{\u} \to N$ the unique conditional expectation such that $\psi \circ F = \varphi^{\u}$. Recall from above that $E_{\u} : M^{\u} \to M$ given by $E_\u((x_i)^{\u})= w\text{-}\lim_{\u} x_i$ is the canonical faithful normal conditional expectation. By definition, we have $\varphi \circ E_{\u} = \varphi^{\u}$. Since $\varphi$ is faithful and since 
$$\varphi \circ (E \circ F) = \psi \circ F = \varphi^{\u} = \varphi \circ E_{\u},$$ by uniqueness of the conditional expectation, we have $E \circ F = E_{\u}$. 

Let now $\rho$ be any faithful normal state on $M$. We then have $(\rho \circ E) \circ F = \rho \circ E_{\u} = \rho^{\u}$. With respect to the same inclusions and the same conditional expectations $E$ and $F$, we have $(M, \rho) \subseteq (N, \rho \circ E)$ and $(N, \rho \circ E) \textcolor{red}{\subseteq} (M^{\u}, (\rho \circ E) \circ F) = (M, \rho)^{\u}$ and so $(M, \rho) \subseteq (N, \rho \circ E) \subseteq (M, \rho)^{\u}$. This shows that $(M,\rho)$ is e.c.\! in $(N,\rho\circ E)$. 
\end{proof}

\begin{lem}\label{basicecfactrs}
Suppose that $(M, \varphi)$ is e.c.\! in $(N, \psi)$.  Then:
\begin{enumerate}
    \item If $N$ is a factor, then $M$ is a factor.  
    \item $T(M)\subseteq T(N)$.
\end{enumerate}
\end{lem}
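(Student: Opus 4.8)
The plan is to treat the two items separately, using in each case only the semantic form of existential closedness provided by the definition: we are given a chain $(M,\varphi)\subseteq (N,\psi)\subseteq (M,\varphi)^{\mathcal U}=(M^{\mathcal U},\varphi^{\mathcal U})$ of inclusions of W$^*$-probability spaces in which the composite $(M,\varphi)\subseteq(M^{\mathcal U},\varphi^{\mathcal U})$ is the diagonal one. Throughout I will use the description of inclusions of W$^*$-probability spaces recalled in Section~2 (that is, \cite[Theorem IX.4.2]{Ta03a}): $\sigma^{\varphi^{\mathcal U}}$ leaves both $M$ and $N$ globally invariant and restricts to $\sigma^\varphi$ on $M$ and to $\sigma^\psi$ on $N$.

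To prove (1), I would observe that the diagonal copy of $\mathcal Z(M)$ lands inside $\mathcal Z(M^{\mathcal U})$. Indeed, if $z\in\mathcal Z(M)$ and $(x_i)^{\mathcal U}\in M^{\mathcal U}$, then $z\,(x_i)^{\mathcal U}=(zx_i)^{\mathcal U}=(x_iz)^{\mathcal U}=(x_i)^{\mathcal U}\,z$, because $z$ commutes with each $x_i\in M$; here I have only used that multiplication in $M^{\mathcal U}$ is computed coordinatewise on the multiplier algebra and that $z$ is represented by a constant sequence. In particular $z$ commutes with every element of $N\subseteq M^{\mathcal U}$, and since $z\in M\subseteq N$ we conclude $\mathcal Z(M)\subseteq\mathcal Z(N)$. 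If $N$ is a factor then $\mathcal Z(N)=\mathbb C 1$, so $\mathcal Z(M)=\mathbb C 1$ and $M$ is a factor.

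For (2), let $t\in T(M)$, so that by definition $\sigma^\varphi_t=\Ad(v)$ for some $v\in\mathcal U(M)$. The one genuinely non-elementary input is the standard fact about the Ocneanu ultrapower (see \cite{AH12}) that the modular automorphism group of $\varphi^{\mathcal U}$ acts coordinatewise, i.e.\ $\sigma^{\varphi^{\mathcal U}}_t\big((x_i)^{\mathcal U}\big)=\big(\sigma^\varphi_t(x_i)\big)^{\mathcal U}$; this is available because $\sigma^\varphi_t$ preserves $\varphi$ and is therefore a $\|\cdot\|_\varphi^\sharp$-isometry, hence preserves $\mathfrak I_{\mathcal U}$ and $\mathfrak M^{\mathcal U}$ and descends to the quotient. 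Substituting $\sigma^\varphi_t=\Ad(v)$ into this formula yields $\sigma^{\varphi^{\mathcal U}}_t=\Ad(v)$ as an automorphism of all of $M^{\mathcal U}$, where now $v$ is understood via its diagonal image in $M^{\mathcal U}$. Restricting to $N$ and invoking $\sigma^\psi=\sigma^{\varphi^{\mathcal U}}|_N$ gives $\sigma^\psi_t=\Ad(v)|_N$, and since $v\in M\subseteq N$ this exhibits $\sigma^\psi_t$ as an inner automorphism of $N$; hence $t\in T(N)$.

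Both arguments amount to bookkeeping once the structural facts are in hand, so the main obstacle is really the second one: being confident that one may quote (or, for a self-contained account, verify) that the modular group of a faithful normal state is compatible with the Ocneanu ultrapower in the coordinatewise sense used above. Part (1) needs nothing beyond coordinatewise multiplication and the sandwiching $M\subseteq N\subseteq M^{\mathcal U}$.
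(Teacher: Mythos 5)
Your proof is correct and follows essentially the same route as the paper: part (1) via the observation that the diagonal copy of $\mathcal Z(M)$ lies in $\mathcal Z(M^{\mathcal U})$, hence $\mathcal Z(M)\subseteq\mathcal Z(N)$, and part (2) by transporting $\sigma^\varphi_t=\Ad(v)$ to $M^{\mathcal U}$ using the fact that $\sigma^{\varphi^{\mathcal U}}_t$ acts coordinatewise and then restricting to $N$ via $\sigma^\psi=\sigma^{\varphi^{\mathcal U}}|_N$. The only small caveat is that your parenthetical gloss in (2) shows merely that the coordinatewise map descends to a $\varphi^{\mathcal U}$-preserving automorphism of $M^{\mathcal U}$, not that it equals the modular group of $\varphi^{\mathcal U}$, so that identity genuinely rests on the cited result of \cite{AH12}, exactly as it does (implicitly) in the paper's proof.
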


\begin{proof}
By assumption, we have $(M, \varphi) \subseteq (N, \psi) \subseteq (M, \varphi)^\u$ where $(M, \varphi) \subseteq (M, \varphi)^\u$ is the diagonal inclusion.

(1) Assume that $N$ is factor. Since $\mathcal Z(M) \subseteq \mathcal Z(N)$, it follows that $\mathcal Z(M) = \bC 1$ and so $M$ is a factor.

(2) Let $t \in T(M)$. Then $\sigma_t^\varphi \in \Inn(M)$ and there exists $u \in \mathcal U(M)$ such that $\sigma_t^\varphi = \Ad(u)$. Since $\sigma_t^\psi = \sigma_t^{\varphi^\u}|_N$ and since $\sigma_t^{\varphi^\u} = \Ad(u)$, it follows that $\sigma_t^\psi = \Ad(u) \in \Inn(N)$. Therefore, $t \in T(N)$.
\end{proof}

The next key result gives a necessary condition for an e.c.\! inclusion of W$^*$-probability spaces to be of type III$_0$. Unlike other results in this paper, the next theorem is a purely type III von Neumann algebraic statement.

\begin{thm}\label{ecintypethreezero}
Suppose that $(M, \varphi)$ is e.c.\! in $(N, \psi)$. If $M$ is a type ${\rm III_0}$ factor and $N$ is a factor, then $N$ is a type ${\rm III}_0$ factor.
\end{thm}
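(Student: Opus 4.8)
The plan is to leverage the two pieces of structural information already established: $T(M) \subseteq T(N)$ from Lemma~\ref{basicecfactrs}(2), together with the fact (from the Preliminaries) that Connes' $T$-invariant completely determines the type among factors that are not of type III$_0$. So suppose for contradiction that $N$ is a factor of type different from III$_0$, while $M$ is of type III$_0$. Since $M$ is type III$_0$, we have $T(M) = \{0\}$ is the only constraint the $T$-invariant imposes (indeed every type III$_0$ factor has $T(M)$ possibly nontrivial, so this alone is not enough — here is where the real work lies). The point is that $T(M) \subseteq T(N)$ forces $T(N)$ to contain whatever $T(M)$ contains; if $N$ were semifinite then $T(N) = \bR$, which is consistent with $T(M) = \{0\}$, so the $T$-invariant alone does \emph{not} immediately rule out $N$ semifinite. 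Hence I expect the proof to need the $S$-invariant, or rather the characterization of type III$_0$ via the flow of weights being non-transitive and aperiodic, transported through the e.c.\ hypothesis.

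The cleaner route I would actually pursue: use the semantic reformulation $(M,\varphi) \subseteq (N,\psi) \subseteq (M,\varphi)^\u$ with $E \circ F = E_\u$ (as in the proof of Proposition~\ref{prop:flexibility}), and argue at the level of the continuous cores. Since the core $(\core(M),\theta,\Tr)$ is independent of the state, and since an inclusion with expectation $(M,\varphi) \subseteq (N,\psi)$ induces a compatible inclusion of cores $\core(M) \subseteq \core(N)$ intertwining the dual actions $\theta$ and with $\Tr$ restricting correctly (this is a standard functoriality of the crossed product by the modular action, using condition (2) of the Takesaki characterization of inclusions), we obtain
$$
\core(M) \subseteq \core(N) \subseteq \core(M)^\u \ \text{(in a suitable sense)},
$$
equivariantly for $\theta$ and $\Tr$-preservingly. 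Restricting to centers, $\mathcal Z(\core(M)) \subseteq \mathcal Z(\core(N))$ need \emph{not} hold directly (centers don't behave well under inclusions), so instead I would use: if $N$ is \emph{not} type III$_0$, its flow of weights is either the translation flow (semifinite case) or periodic; in either case there is a $\theta$-fixed (semifinite case: affiliated) structure, e.g.\ a nonzero $\theta$-periodic or $\theta$-invariant central projection / a $1$-parameter family exhibiting periodicity, which one can pull back through the embedding $\core(N) \hookrightarrow \core(M)^\u$ and then through the expectation down to $\core(M)$, contradicting the aperiodicity and non-transitivity of the flow of weights of the type III$_0$ factor $M$.

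The main obstacle, and where I would concentrate the technical effort, is making precise the ``inclusion of cores'' statement and especially controlling the \emph{center} of $\core(N)$ via the e.c.\ hypothesis: one must show that the flow of weights of $N$ is captured (up to the relevant invariant) by data visible inside $\core(M)^\u$ and then pushed to $\core(M)$ by $E_\u$ at the core level. Concretely I would: (i) show $\core(M)^\u \cong \core(M^\u)$ compatibly with $\theta$ and $\Tr$ (a known compatibility of the crossed product with Ocneanu ultrapowers, cf.\ the behavior of $\core$ under ultraproducts), so that the chain of inclusions of W$^*$-probability spaces yields an equivariant chain of (semifinite) von Neumann algebras with trace-scaling flows; (ii) observe that a nonzero $\theta$-invariant projection in $\mathcal Z(\core(N))$ with $\theta|_{\mathcal Z}$ aperiodic but non-ergodic — no wait, $\mathcal Z(\core(N))$ is ergodic by factoriality — so rather: if $N$ is not III$_0$ then $\theta|_{\mathcal Z(\core(N))}$ is either the translation flow on $\bR$ or periodic, and in the periodic case of period $T$ there is $t \in T(N)$, namely $t = T$... this again loops back to $T$. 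So the honest statement is: the case $N$ semifinite must be excluded by a separate argument (e.g.\ $M$ type III$_0$ implies $M$ is not semifinite, i.e.\ $M$ has no faithful normal semifinite trace, and this property — ``not semifinite'' — is preserved downward under e.c.\ inclusions because semifiniteness of $N$ would make $\sigma^\psi$ inner at all times hence, via $\sigma^\varphi = \sigma^\psi|_M$ and approximate-inner transfer, would force... ) — and the case $N$ of type III$_\lambda$, $\lambda \in (0,1)$, is excluded since then $T(N) = \log(\lambda)\bZ \neq \{0\}$ is bounded away from $0$, yet $T(M) \subseteq T(N)$ combined with a lower bound argument on $T(M)$ for type III$_0$ factors... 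Given these tensions, I would ultimately structure the proof as: (a) rule out $N$ of type III$_\lambda$, $\lambda\in(0,1)$, and $N$ of type III$_1$ using $T(M)\subseteq T(N)$ after first proving the complementary fact that $(M,\varphi)$ e.c.\ in $(N,\psi)$ with $M$ type III$_0$ forces enough of $T(M)$ into $T(N)$ — actually, since a type III$_0$ factor can have $T(M)$ any countable subgroup, including $\{0\}$, this does \emph{not} work, so the genuinely correct and necessary tool must be the core/flow-of-weights transfer; (b) therefore the real argument is the core-level one, and the crux is proving $\mathcal Z(\core(N)) \subseteq \mathcal Z(\core(M))^\u$ in the appropriate sense, which then transfers the III$_0$ classification (flow of weights aperiodic and non-transitive) from $M$ to $N$ by an elementary-extension-style absoluteness argument on the ergodic flow. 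I expect this center-transfer step to be the heart of the proof and the place requiring the most care.
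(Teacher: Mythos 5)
Your own diagnosis is accurate: the $T$-invariant route collapses because a type III$_0$ factor may well have $T(M)=\{0\}$, so Lemma \ref{basicecfactrs}(2) gives no purchase on $N$, and your plan therefore throws everything onto the proposed ``core-level transfer.'' But that transfer is exactly where the proposal stops being a proof. You never establish the two statements it rests on: (i) the compatibility of the Ocneanu ultrapower with the continuous core, i.e.\ some form of $\core(M^\u)\cong \core(M)^\u$ --- this is not a formal functoriality (the core is a semifinite algebra, not a W$^*$-probability space, so ``its Ocneanu ultrapower'' is not even defined without extra choices), and the known ultrapower-versus-core compatibilities of Ando--Haagerup go through weights with special properties rather than a naive ultrapower of the core; and (ii) the ``center-transfer'' $\mathcal Z(\core(N))\subseteq \mathcal Z(\core(M))^\u$ in a suitable sense, which you yourself flag as the heart of the matter and leave unproved. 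Centers of intermediate algebras in a chain $P\subseteq Q\subseteq P^\u$ are not controlled in general, so (ii) is not a routine step but essentially the whole content of the theorem; as written, the proposal is a plan with the crucial lemma missing.

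For comparison, the paper avoids the continuous core altogether and uses Connes' discrete decomposition of III$_0$ factors: a lacunary faithful normal strictly semifinite weight $\Phi$ of infinite multiplicity gives $M=M_\Phi\rtimes_\theta\bZ$ with $M_\Phi$ of type II$_\infty$ with diffuse center and $\tau\circ\theta\le\lambda_0\,\tau$ for some $\lambda_0<1$. Lacunarity is what makes the ultrapower cooperate: $(M^\u)_{\Phi^\u}=(M_\Phi)^\u$ and $M^\u=(M_\Phi)^\u\rtimes_{\theta^\u}\bZ$ by Ando--Haagerup. The real work is then the Claim that the intermediate algebra $N$ is itself a crossed product, $N=N_\Psi\rtimes_{\theta^\u}\bZ$ with $\Psi=\Phi\circ E$, proved by showing $E_{\Phi^\u}(N)=N_\Psi$ via a Suzuki-style partition-of-unity argument exploiting freeness of the ergodic $\bZ$-action on the diffuse center $\mathcal Z(M_\Phi)$; factoriality of $N$ then yields ergodicity of $\theta^\u$ on $\mathcal Z(N_\Psi)$, and Connes' criterion ($\tau^\u\circ\theta^\u\le\lambda_0\,\tau^\u$ with diffuse center and ergodic action) forces $N$ to be of type III$_0$. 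So the ``transfer'' you hoped for is, in the paper, replaced by a concrete intermediate-subalgebra argument at the level of the discrete core --- precisely the step your proposal would still need to supply.
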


\begin{proof}
By assumption, we have $(M, \varphi) \subseteq (N, \psi) \subseteq (M, \varphi)^\u$ where $(M, \varphi) \subseteq (M, \varphi)^\u$ is the diagonal inclusion. Fix faithful normal conditional expectations $E : N \to M$ and $F : M^\u \to N$ so that $E \circ F : M^\u \to M$ is the canonical faithful normal conditional expectation $E_{\u} : M^\u \to M$. 

Since $M$ is a type III$_0$ factor, by \cite[Lemme 5.3.2]{Co72}, there exists a lacunary faithful normal strictly semifinite weight $\Phi$ with infinite multiplicity on $M$. Then the centralizer $M_{\Phi}$ is a type II$_\infty$ von Neumann algebra and there exists a unique faithful normal $\Phi$-preserving conditional expectation $E_{\Phi} : M \to M_{\Phi}$. By \cite[Th\'eor\`eme 5.3.1]{Co72}, there exist $0 < \lambda_0 < 1$ and $u \in M(\sigma^{\Phi}, (-\infty, \log(\lambda_0)])$ such that $u M_{\Phi} u^* = M_{\Phi}$ and $M$ is generated by $M_{\Phi}$ and $u$.  For the definition of the spectral subspace $M(\sigma^{\Phi}, (-\infty, \log(\lambda_0)])$, we refer to \cite[Subsection 2.2]{AH12}. Moreover, we canonically have $M = M_{\Phi} \rtimes_\theta {\bb Z}$ where $\theta \in \Aut(M_{\Phi})$ is given by $\theta = \Ad(u)|_{M_{\Phi}}$. Also, $M_{\Phi}$ has a diffuse center and $\theta |_{\mathcal Z(M_{\Phi})} \in \Aut(\mathcal Z(M_{\Phi}))$ is ergodic. Letting $\tau = \Phi |_{M_{\Phi}}$, we have $\tau(\theta(x)) \leq \lambda_0 \tau(x)$ for every $x \in (M_{\Phi})_+$.

Set $\Psi = \Phi \circ E$. Following \cite[Definition 4.25]{AH12}, we have $\Psi \circ F = \Phi \circ E \circ F = \Phi \circ E_{\u} = \Phi^{\u}$. Since $\Phi$ is lacunary, by \cite[Proposition 4.27]{AH12}, the equality $(M^{\u})_{\Phi^{\u}} = (M_\Phi)^{\u}$ holds. Then \cite[Proposition 6.23]{AH12} shows that $M^\u$ is generated by $(M_\Phi)^\u$ and $u$. Moreover, we canonically have $M^\u = (M_\Phi)^\u \rtimes_{\theta^\u} {\bb Z}$, where $\theta^\u \in \Aut((M_\Phi)^\u)$ is given by $\theta^\u = \Ad(u)|_{(M_\Phi)^\u}$. Letting  $\tau^\u = \Phi^\u|_{(M_\Phi)^\u}$, we have $\tau^\u(\theta^\u(x)) \leq \lambda_0 \tau^\u(x)$ for every $x \in ((M_\Phi)^\u)_+$.

 Note that $N \subseteq M^\u$ is globally invariant under $\sigma^{\Phi^\u}$ and $\sigma^\Psi = \sigma^{\Phi^\u}|_N$. This implies that $M_\Phi \subseteq N_\Psi \subseteq (M_\Phi)^\u$ which further implies that the centralizer $N_\Psi$ is a type ${\rm II_\infty}$ von Neumann algebra with diffuse center. Observe that $N_\Psi = N \cap (M_\Phi)^\u$ and that $\theta^\u(N_\Psi) = u N_\Psi u^* = N_\Psi$. Then $N_\Psi \subseteq (M_\Phi)^\u$ is a ${\bb Z}$-globally invariant von Neumann subalgebra and we have $N_\Psi \rtimes_{\theta^\u} {\bb Z} \subseteq N$.

\begin{claim}\label{claim:crossed-product}
The equality $N = N_\Psi \rtimes_{\theta^\u} {\bb Z}$ holds.
\end{claim}

Recall that $M^\u = (M_\Phi)^\u \rtimes_{\theta^\u} {\bb Z}$ and denote by $E_{\Phi^\u} : M^\u \to (M_\Phi)^\u$ the canonical faithful normal $\Phi^\u$-preserving conditional expectation. We will show below that $E_{\Phi^\u}(N) = N_\Psi$. Once this is proven, \cite[Corollary 3.4]{Su18} implies that 
$$N \subseteq  \left\{x \in (M_\Phi)^\u \rtimes_{\theta^\u} {\bb Z} \mid \forall j \in {\bb Z}, E_{\Phi^\u}(x u^{-j}) \in N_\Psi \right\} = N_\Psi \rtimes_{\theta^\u} {\bb Z}$$
and thus $N = N_\Psi \rtimes_{\theta^\u} {\bb Z}$. 

First observe that $N_\Psi = E_{\Phi^\u}(N_\Psi) \subset E_{\Phi^\u}(N)$. Next, let $x \in  (M_\Phi)^\u \rtimes_{\alg} {\bb Z}$ be any element in the algebraic crossed product. Denote by $\mathcal F \subset {\bb Z}$ the finite support of $x$. Since $\mathcal Z(M_\Phi) = L^\infty(X, \nu)$ is diffuse and since the action ${\bb Z} \curvearrowright \mathcal Z(M_\Phi)$ is ergodic, it follows that the action ${\bb Z} \curvearrowright (X, \nu)$ is essentially free. Then \cite[Lemma 3.1]{Su18} (whose proof works for arbitrary diffuse probability spaces) implies that there exists a finite partition of unity $\sum_{i} p_i = 1$ with projections $p_i \in \mathcal Z(M_\Phi)$ such that $p_i \theta^j(p_i) = 0$ for all $i$ and all $j \in \mathcal F \setminus \{0\}$. Then we have
\begin{align*}
     \sum_i p_i x p_i &= \sum_i \sum_{j \in \mathcal F} p_i \, E_{\Phi^\u}(x u^{-j}) u^j \, p_i \\
     &= \sum_i \sum_{j \in \mathcal F} p_i E_{\Phi^\u}(x u^{-j}) \theta^j(p_i) u^j \\
     &= \sum_i \sum_{j \in \mathcal F} p_i \theta^j(p_i) E_{\Phi^\u}(x u^{-j})  u^j  \quad (\text{since } \theta^j(p_i) \in \mathcal Z(M_\Phi) \subseteq \mathcal Z((M_\Phi)^\u)) \\
     &= \sum_i p_i \, E_{\Phi^\u}(x) \\
     &= E_{\Phi^\u}(x).
\end{align*}
 Thus, we have $E_{\Phi^\u}(x) = \sum_i p_i x p_i$. 
 
We use an idea in \cite[Lemma 3.2]{Su18}. For this, we choose a faithful normal state $\rho$ on $M^\u$ such that $\rho \circ E_{\Phi^\u} = \rho$. Then $(M_\Phi)^\u \subseteq M^\u$ is globally invariant under the modular automorphism group $\sigma^\rho$. Since $\mathcal Z(M_\Phi) \subseteq \mathcal Z((M_\Phi)^\u)$, it follows that $\mathcal Z(M_\Phi)$ is contained in the centralizer $(M^\u)_\rho$. Let now $y \in N$ be any element. For every $n \geq 1$, we may choose $x_n \in (M_\Phi)^\u \rtimes_{\alg} {\bb Z}$ so that  $\|y - x_n\|_{\rho} \leq \frac{1}{2n}$. We then have $\|E_{\Phi^\u}(y - x_n)\|_{\rho} \leq \|y - x_n\|_{\rho} \leq \frac{1}{2n}$. The above reasoning shows that there exists a finite partition of unity $\sum_{i} p^n_i = 1$ with projections $p^n_i \in \mathcal Z(M_\Phi)$ such that $E_{\Phi^\u}(x_n) = \sum_i p^n_i x p^n_i$. Since for every $i$, we have $p^n_i \in \mathcal Z(M_\Phi) \subset (M^\u)_\rho$ and $0 \leq p^n_i \leq 1$, we obtain
 \begin{align*}
     \| \sum_i p^n_i (x_n - y) p^n_i\|_{\rho}^2 &= \rho(\sum_i p^n_i (x_n - y)^* p^n_i (x_n - y) p^n_i) \\
     &\leq \rho(\sum_i p^n_i (x_n - y)^* (x_n - y) p^n_i) \\
     &= \rho(\sum_i p^n_i (x_n - y)^* (x_n - y)) \\
     &= \rho( (x_n - y)^* (x_n - y)) \\
     &= \|x_n - y\|^2_{\rho}.
 \end{align*}
 This implies that
 \begin{align*}
     \|E_{\Phi^\u}(y) - \sum_i p^n_i y p^n_i\|_{\rho} &\leq \|E_{\Phi^\u}(y - x_n)\|_{\rho} 
     + \|E_{\Phi^\u}(x_n) - \sum_i p^n_i x_n p^n_i\|_{\rho} \\
     & \quad + \| \sum_i p^n_i (x_n - y) p^n_i\|_{\rho} \\
     &\leq \|x_n - y\|_{\rho} + 0 + \|x_n - y\|_{\rho} \\
     &\leq \frac{1}{2n} + \frac{1}{2n} = \frac1n.
 \end{align*}
 For every $n \geq 1$, set $y_n := \sum_i p^n_i y p^n_i \in N$ (recall that $p_i^n \in \mathcal Z(M_\Phi) \subseteq N$). Since $\lim_n \|E_{\Phi^\u}(y) - y_n\|_{\rho} = 0$, the uniformly bounded sequence $(y_n)_{n \geq 1}$ converges strongly to $E_{\Phi^\u}(y)$. Since $y_n \in N$ for every $n \geq 1$ and since $N$ is strongly closed, this implies that $E_{\Phi^\u}(y) \in N$. This further implies that $E_{\Phi^\u}(N) \subset N \cap (M_\Phi)^\u = N_\Psi$ and so $E_{\Phi^\u}(N) = N_\Psi$. This finishes the proof of the claim.

By Claim \ref{claim:crossed-product}, we have $N = N_\Psi \rtimes_{\theta^\u} {\bb Z}$ where $N_\Psi$ is a type II$_\infty$ von Neumann algebra with diffuse center. Since $N$ is a factor, we have that $\theta^\u|_{\mathcal Z(N_\Psi)} \in \Aut(\mathcal Z(N_\Psi))$ is necessarily ergodic. Moreover, we have $\tau^\u(\theta^\u(x)) \leq \lambda_0 \tau^\u(x)$ for every $x \in (N_\Psi)_+$. Then \cite[Proposition 5.1.1]{Co72} implies that $N$ is a type III$_0$ factor.
\end{proof}

\begin{cor}\label{ecintypethreeone}
Suppose that $(M, \varphi)$ is e.c.\! in $(N, \psi)$. If $N$ is a type ${\rm III}_1$ factor, then so is $M$.
\end{cor}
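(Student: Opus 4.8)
The plan is to deduce the corollary purely formally from Lemma~\ref{basicecfactrs}, Theorem~\ref{ecintypethreezero}, and the characterization of the type of a factor by Connes' $T$-invariant recalled in Section~2; no new von Neumann algebraic input is needed, since all the genuine work is already contained in Theorem~\ref{ecintypethreezero}.

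First I would record that $M$ is a factor: this is immediate from Lemma~\ref{basicecfactrs}(1) since $N$ is a factor. Next, by Lemma~\ref{basicecfactrs}(2) we have $T(M)\subseteq T(N)$, and since $N$ is of type ${\rm III}_1$ we have $T(N)=\{0\}$; hence $T(M)=\{0\}$ as well. Now I would run the case analysis over the possible types of the factor $M$. If $M$ were semifinite, then $T(M)=\bR$, and if $M$ were of type ${\rm III}_\lambda$ for some $\lambda\in(0,1)$, then $T(M)=\log(\lambda)\bZ$; in either case $T(M)\neq\{0\}$, a contradiction. So $M$ is of type ${\rm III}_0$ or of type ${\rm III}_1$. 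To exclude the first possibility, I would invoke Theorem~\ref{ecintypethreezero}: if $M$ were of type ${\rm III}_0$, then, $N$ being a factor, $N$ would also be of type ${\rm III}_0$, contradicting the hypothesis that $N$ is of type ${\rm III}_1$. Therefore $M$ is of type ${\rm III}_1$.

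There is no real obstacle in the corollary itself; the delicate content is all in Theorem~\ref{ecintypethreezero} (the crossed-product decomposition $N=N_\Psi\rtimes_{\theta^\u}\bZ$ and the verification that $E_{\Phi^\u}$ carries $N$ onto $N_\Psi$). The one point to be careful about is that the $T$-invariant only pins down the type once $M$ is known not to be of type ${\rm III}_0$, which is precisely why the case split is required rather than a one-line appeal to $T(M)=\{0\}$, and why Theorem~\ref{ecintypethreezero} is needed to close the argument.
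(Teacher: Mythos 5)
Your proposal is correct and follows essentially the same route as the paper: factoriality of $M$ from Lemma~\ref{basicecfactrs}(1), exclusion of type ${\rm III}_0$ via Theorem~\ref{ecintypethreezero}, and $T(M)\subseteq T(N)=\{0\}$ from Lemma~\ref{basicecfactrs}(2) together with the $T$-invariant classification. The only difference is cosmetic — you run the $T$-invariant case analysis before invoking Theorem~\ref{ecintypethreezero}, whereas the paper rules out type ${\rm III}_0$ first — and your remark that $T(M)=\{0\}$ alone does not exclude type ${\rm III}_0$ is exactly the point the paper's ordering also addresses.
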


\begin{proof}
Lemma \ref{basicecfactrs}(1) implies that $M$ is a factor.  Theorem \ref{ecintypethreezero} implies that $M$ is not of type III$_0$.  Lemma \ref{basicecfactrs}(2) implies that $T(M)\subseteq T(N)=\{0\}$.  Since $M$ is not of type III$_0$, this further implies that $M$ has type III$_1$.
\end{proof}

\subsection{Global existential closedness}

As stated in the introduction, we say that $(M,\varphi)$ is \emph{existentially closed} (e.c.) if whenever $(M,\varphi)\subseteq (N,\psi)$, then $(M,\varphi)$ is e.c. in $(N,\psi)$.
An immediate consequence of Proposition \ref{prop:flexibility} is the following:
\begin{prop}\label{flexible}
If $M$ is a $\sigma$-finite von Neumann algebra, then for any two faithful, normal states $\varphi_1$ and $\varphi_2$ on $M$, we have that $(M,\varphi_1)$ is e.c.\! if and only if $(M,\varphi_2)$ is e.c.
\end{prop}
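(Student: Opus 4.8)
The plan is to deduce this directly from Proposition \ref{prop:flexibility}. By the obvious symmetry between $\varphi_1$ and $\varphi_2$, it suffices to prove one implication, say that $(M,\varphi_1)$ being e.c.\ implies $(M,\varphi_2)$ is e.c. So I would fix an arbitrary inclusion of W$^*$-probability spaces $(M,\varphi_2)\subseteq (N,\psi)$ and aim to show that $(M,\varphi_2)$ is e.c.\ in $(N,\psi)$; since the extension is arbitrary, this yields the global existential closedness of $(M,\varphi_2)$.

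The first step is to bring $\varphi_1$ into the picture. Let $E\colon N\to M$ be the unique faithful normal conditional expectation with $\varphi_2\circ E=\psi$, and set $\psi_1:=\varphi_1\circ E$. Because $E$ is idempotent, $\psi_1\circ E=\varphi_1\circ E\circ E=\varphi_1\circ E=\psi_1$, i.e.\ $E$ is $\psi_1$-preserving; by the characterization of inclusions of W$^*$-probability spaces recalled in Section 2 (via \cite[Theorem IX.4.2]{Ta03a}), this shows that $(M,\varphi_1)\subseteq (N,\psi_1)$ is again an inclusion of W$^*$-probability spaces, whose associated conditional expectation is precisely $E$ (the unique faithful normal conditional expectation $N\to M$ pulling $\psi_1$ back to $\varphi_1$). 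The second step uses the hypothesis: since $(M,\varphi_1)$ is e.c.\ and $(M,\varphi_1)\subseteq(N,\psi_1)$, we get that $(M,\varphi_1)$ is e.c.\ in $(N,\psi_1)$. The third step is to apply Proposition \ref{prop:flexibility} to this e.c.\ inclusion with the state $\rho:=\varphi_2$ on $M$: as the conditional expectation attached to $(M,\varphi_1)\subseteq(N,\psi_1)$ is $E$, the proposition yields that $(M,\varphi_2)$ is e.c.\ in $(N,\varphi_2\circ E)=(N,\psi)$, which is exactly what we wanted.

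The only point requiring a little care --- and what I'd regard as the main (small) obstacle --- is the bookkeeping in the first step: one must check that $(N,\psi_1)$ really is an extension of $(M,\varphi_1)$ in the precise sense of the paper (that $\sigma^{\psi_1}$ leaves $M$ globally invariant and restricts to $\sigma^{\varphi_1}$, not merely that $\psi_1|_M=\varphi_1$), and that the conditional expectation featuring in Proposition \ref{prop:flexibility} is genuinely the same $E$ we began with. Both follow from the equivalence and uniqueness clauses in \cite[Theorem IX.4.2]{Ta03a} together with the idempotency of $E$. Once that is in place, the rest is a one-line chase through Proposition \ref{prop:flexibility}, so the argument is indeed ``immediate''.
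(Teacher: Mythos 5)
Your argument is correct and is precisely the intended deduction: the paper states Proposition \ref{flexible} as an immediate consequence of Proposition \ref{prop:flexibility}, and your route (pass from the given extension $(M,\varphi_2)\subseteq(N,\psi)$ to $(M,\varphi_1)\subseteq(N,\varphi_1\circ E)$ via the same expectation $E$, invoke existential closedness of $(M,\varphi_1)$, then apply Proposition \ref{prop:flexibility} with $\rho=\varphi_2$ to return to $(N,\psi)$) is exactly that deduction, with the bookkeeping about the inclusion and the uniqueness of $E$ handled correctly.
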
 

By the previous proposition, it is sensible to call a $\sigma$-finite von Neumann algebra $M$ existentially closed (e.c.) if $(M,\varphi)$ is an e.c. W$^*$-probability space for some (equivalently any) $\varphi\in \frak S_{fn}(M)$.  

The next result enumerates many important facts about \textcolor{red}{e.c.}\ W$^*$-probability spaces.  

\begin{thm}\label{thm:ec}
Suppose that $M$ is an e.c.\! W$^*$-probability space.  Then the following assertions hold:
\begin{enumerate}
    \item $M$ is a type III$_1$ factor.
    \item $M\otimes R_\infty\cong M$.
    \item For any full factor $Q$, either $Q$ is of type I or $M \ncong Q \otimes R_\infty$.
    \item Any automorphism of $M$ is approximately inner.
    \item For every subfactor $N \subset M$ with expectation and with w-spectral gap, we have $(N'\cap M)'\cap M=N$.
\end{enumerate}
\end{thm}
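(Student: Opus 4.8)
The plan is to prove the five assertions in order, using throughout the semantic description of existential closedness recorded above: $(M,\varphi)$ is e.c.\ precisely when every W$^*$-probability space extension of it embeds into $(M,\varphi)^{\mathcal U}$ over the diagonal copy of $M$, together with a state-preserving normal faithful conditional expectation. Each item will be obtained by exhibiting a well-chosen extension $(N,\psi)\supseteq(M,\varphi)$ and then reading off information from the resulting embedding $N\hookrightarrow M^{\mathcal U}$; the arguments parallel, and in a few places genuinely extend, those known for existentially closed $\mathrm{II}_1$ factors.

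For (1) I would embed $(M,\varphi)$, with state-preserving expectation, into a W$^*$-probability space whose underlying algebra is a type $\mathrm{III}_1$ factor — for instance the reduced free product $(N,\psi)=(M,\varphi)\ast(R_\infty,\varphi_\infty)$ with its canonical expectation onto $M$, using that such free products are full type $\mathrm{III}_1$ factors. Then Lemma \ref{basicecfactrs}(1) gives that $M$ is a factor, and Corollary \ref{ecintypethreeone} upgrades this to: $M$ is a type $\mathrm{III}_1$ factor. For (2), the inclusion $(M,\varphi)\subseteq(M\otimes R_\infty,\varphi\otimes\varphi_\infty)$ with expectation $\mathrm{id}\otimes\varphi_\infty$ and existential closedness place $M\otimes R_\infty$ inside $M^{\mathcal U}$ over the diagonal $M$. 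Using (1) — so that $M$ is of type $\mathrm{III}$ and hence absorbs matrix amplifications, $M\otimes M_{2^k}\cong M$ — together with $R_\infty=\overline{\bigcup_k M_{2^k}}$, one checks that $M\otimes R_\infty$ is itself e.c. (a property independent of the state by Proposition \ref{flexible}); since an e.c.\ substructure of an e.c.\ structure is an elementary substructure (a consequence of the stability of e.c.\ under ultrapowers), $M\preceq M\otimes R_\infty$, and an intertwining/back-and-forth argument exploiting $R_\infty\otimes R_\infty\cong R_\infty$ promotes $M\equiv M\otimes R_\infty$ to an isomorphism $M\cong M\otimes R_\infty$.

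For (4), given $\theta\in\Aut(M)$ I would take $(N,\psi)=(M\rtimes_\theta\mathbb{Z},\varphi\circ E_0)$ with $E_0$ the canonical normal faithful conditional expectation onto $M$; one verifies this is an inclusion of W$^*$-probability spaces and that the generating unitary $v\in\mathcal U(N)$ satisfies $\mathrm{Ad}(v)|_M=\theta$. Existential closedness then yields $w\in\mathcal U(M^{\mathcal U})$ with $\mathrm{Ad}(w)|_M=\theta$; lifting $w$ to a net of unitaries $(w_i)_i$ in $\mathcal U(M)$ and invoking that the $u$-topology restricted to $\{\mathrm{Ad}(u):u\in\mathcal U(M)\}$ is implemented by the single faithful normal state $\varphi$, we conclude $\mathrm{Ad}(w_i)\to\theta$ in the $u$-topology, i.e.\ $\theta$ is approximately inner. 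For (5), the inclusion $N\subseteq(N'\cap M)'\cap M$ is automatic, so fix $x\in(N'\cap M)'\cap M$ and form the amalgamated free product $\widetilde M=M\ast_N(N\otimes P)$ over $N$ — with respect to $E:M\to N$ and $\mathrm{id}_N\otimes\omega_P:N\otimes P\to N$, for a suitably chosen factor $(P,\omega_P)$ — which contains $(M,\varphi)$ with expectation. Existential closedness gives $\widetilde M\hookrightarrow M^{\mathcal U}$ over $M$, and since $P\subseteq N'\cap\widetilde M\subseteq N'\cap M^{\mathcal U}=(N'\cap M)^{\mathcal U}$ (the last equality by the w-spectral gap of $N$ in $M$), while $x\in M$ commutes with $N'\cap M$ and hence with all of $(N'\cap M)^{\mathcal U}$, we get that $x$ commutes with $P$; the standard computation $P'\cap\widetilde M=N$ of relative commutants in amalgamated free products then forces $x\in N$.

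Finally (3): if $M\cong Q\otimes R_\infty$ with $Q$ a full factor, then $Q\otimes 1$ has w-spectral gap in $M$, so $Q'\cap M^{\mathcal U}=(Q'\cap M)^{\mathcal U}=(1\otimes R_\infty)^{\mathcal U}\cong R_\infty^{\mathcal U}$. Applying the argument of (2) to $(M,\varphi)\subseteq(M\otimes M,\varphi\otimes\varphi)$, the second tensor copy of $M$ embeds, with expectation, into $M'\cap M^{\mathcal U}\subseteq Q'\cap M^{\mathcal U}\cong R_\infty^{\mathcal U}$, so $M$ is QWEP; but an e.c.\ W$^*$-probability space cannot be QWEP, since it would then fail to be e.c.\ in an extension $M\ast P'$ with $P'$ a non-QWEP W$^*$-probability space (which exists by the failure of the Connes Embedding Problem). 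This contradiction yields (3), in fact showing $M\not\cong Q\otimes R_\infty$ for every full factor $Q$. The step I expect to be the main obstacle is the amalgamated free product construction in (5): one must realize $M\ast_N(N\otimes P)$ inside the category of W$^*$-probability spaces with the correct modular behavior (so that $(M,\varphi)\subseteq(\widetilde M,\tilde\varphi)$ is a genuine inclusion), choose $(P,\omega_P)$ so that $P'\cap\widetilde M=N$, and deploy the w-spectral-gap hypothesis at exactly the right point — this is where the genuine operator-algebraic content lies. Secondary difficulties are the back-and-forth step in (2) converting $M\equiv M\otimes R_\infty$ into an isomorphism, and checking that the auxiliary extensions used in (3)–(5) are legitimate inclusions of W$^*$-probability spaces.
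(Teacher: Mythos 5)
Your items (1) is exactly the paper's argument, but items (2) and (4) contain genuine gaps at precisely the hard steps. For (2): from the correct starting point ($M\otimes R_\infty$ embeds into $M^\u$ over the diagonal copy of $M$) you try to reach $M\cong M\otimes R_\infty$ via ``an e.c.\ substructure of an e.c.\ structure is an elementary substructure'' followed by a back-and-forth. The first claim is essentially model-completeness of the class of e.c.\ models, which is unavailable here -- the theory of W$^*$-probability spaces has no model companion (Corollary \ref{nomodcomp}) -- and even granting $M\equiv M\otimes R_\infty$, elementary equivalence cannot be promoted to isomorphism by a back-and-forth between non-saturated separable structures (this paper itself exhibits continuum many nonisomorphic, elementarily equivalent factors). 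What is actually needed is the type III analogue of McDuff's theorem: the paper shows that each Powers factor $(R_\lambda,\varphi_\lambda)$ embeds into $(M'\cap M^\u,\varphi^\u|_{M'\cap M^\u})$, so that $\lambda\in\sigma_p(\Delta_{\dot\varphi^\u})$, and then invokes \cite[Theorem 4.32]{AH12} to get $M\otimes R_\lambda\cong M$ for every $\lambda\in(0,1)$, hence $M\otimes R_\infty\cong M$. For (4): producing $u\in\mathcal U(M^\u)$ with $\Ad(u)|_M=\theta$ is the easy half; your lifting step, where representatives $w_i\in\mathcal U(M)$ are claimed to give $\Ad(w_i)\to\theta$ in the $u$-topology, does not work. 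Pointwise $\ast$-strong convergence $w_ixw_i^*\to\theta(x)$ does not give norm convergence of the states $w_i\varphi w_i^*$, and there is no principle that the $u$-topology on inner automorphisms is controlled by a single faithful normal state. Implementation by an ultrapower unitary yields only that $\theta$ is \emph{weakly} inner (\cite[Theorem 4.1]{AHHM18}), and weakly inner does not imply approximately inner in general (\cite{Ma18}); the paper closes exactly this gap with Marrakchi's Theorem F, which crucially uses $M\cong M\otimes R_\infty$ from item (2) -- so your (4) also silently depends on repairing (2).

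Items (3) and (5) are closer to correct. Your (3) is a genuinely different and valid argument: you place a commuting copy of $M$ inside $Q'\cap M^\u=(Q'\cap M)^\u=R_\infty^\u$ (w-spectral gap of the full factor $Q$), conclude $M$ is QWEP, and contradict existential closedness using a non-QWEP factor; but note this needs the negative solution of the Connes Embedding Problem \cite{JNVWY20} to supply the non-QWEP factor, whereas the paper's proof is CEP-free (flip crossed product, then ucp factorizations through matrix algebras to show $M$ is amenable, hence $Q$ is amenable and full, hence type I) -- indeed your stronger conclusion ``$M\ncong Q\otimes R_\infty$ for every full $Q$'' amounts to $M\ncong R_\infty$, which cannot be proved without CEP failing. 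Your (5) is the paper's argument in outline (the paper takes $P=L(\mathbb Z)$ with its trace), but the step you defer as ``standard'' is the actual content: one needs $L(\mathbb Z)\npreceq_{L(\mathbb Z)\otimes N}N$ (diffuseness) and \cite[Proposition 3.3]{Ue12} to compute the relative commutant, which is $L(\mathbb Z)'\cap\widetilde M=L(\mathbb Z)\otimes N$ rather than $N$ as you wrote, and then $x\in M\cap(L(\mathbb Z)\otimes N)=N$ via the amalgamated free product expectation.
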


\begin{proof}
(1) Choose any faithful normal state $\rho$ on $R_\infty$ and denote by $(N, \psi) := (M, \varphi)*(R_\infty, \rho)$ the corresponding free product von Neumann algebra.  Since $N$ is a type III$_1$ factor (see \cite[Theorem 4.1]{Ue10}), Corollary \ref{ecintypethreeone} implies that $M$ is a type III$_1$ factor.

(2) Fix $\lambda \in (0, 1)$. Then $(M,\varphi)\subseteq (M,\varphi)\otimes (R_{\lambda}, \varphi_{\lambda}) \hookrightarrow (M,\varphi)^\u$ with the composition being the diagonal embedding.  In particular, $(R_{\lambda}, \varphi_{\lambda})$ embeds into $(M'\cap M^\u, \dot\varphi^\u)$ where $\dot\varphi^\u = \varphi^\u|_{M' \cap M^\u}$. This implies that $\lambda \in \sigma_p(\Delta_{\dot\varphi^\u})$. Then \cite[Theorem 4.32]{AH12} shows that $M \otimes R_\lambda \cong M$. Since this is true for every $\lambda \in (0, 1)$, it follows that $M \otimes R_\infty \cong M$.

(3) Assume that there exists a full factor $Q$ such that $M = Q \otimes R_\infty$. Denote by $\alpha \in \Aut( M \otimes M)$ the flip automorphism defined by $\alpha(x \otimes y) = y \otimes x$. Regard $M \subseteq M \otimes M : x \mapsto x \otimes 1$ and set $N := (M \otimes M) \rtimes_\alpha \bb{Z}/2\bb{Z}$. Then $M \subseteq N \hookrightarrow M^\u$ with the composition being the diagonal embedding. Denote by $u  = (u_n)^\u \in \mathcal U(N) \subset \mathcal U(M^\u)$ the canonical unitary implementing the action $\bb{Z}/2 \bb{Z} \curvearrowright^\alpha M$. Then for every $x \in M$, we have $1 \otimes x = \alpha(x \otimes 1) = u(x \otimes 1)u^*$. This implies that $uMu^* \subseteq M' \cap M^\u$. Since $Q$ is a full factor, $Q$ has w-spectral gap in $M = Q \otimes R_\infty$ and so $Q' \cap M^\u = (Q' \cap M)^\u = R_\infty^\u$ (see \cite[Corollary 3.7]{HMV16}). We obtain $uMu^* \subseteq R_\infty^\u$ and so $M \subseteq u^* R_\infty^\u u \subseteq M^\u$. Fix a faithful normal state $\psi$ on $Q$ and consider the faithful normal conditional expectation $E :M \to R_\infty$ defined by $E =  \psi \otimes \id_{R_\infty}$. For every $n$, set $R_n = u_n^* R_\infty u_n \subseteq M$ and define the faithful normal conditional expectation $ E_n : M \to R_n$ by the formula $E_n = \Ad(u_n^*) \circ E \circ \Ad(u_n)$. Up to changing the net, we may assume that for every $x \in M$, we have $E_n(x) - x \to 0$ $\ast$-strongly as $n \to \infty$. For every $n$, we may choose a faithful normal state $\varphi_n$ on $R_n$ so that $(R_n, \varphi_n)$ is an AFD W*-probability space (see Definition \ref{defn:AFD}). Therefore, we may find nets of normal ucp maps $S_j : M \to M_{n_j}(\bb{C})$ and $T_j : M_{n_j}(\bb{C}) \to M$ such that for every $x \in M$, we have $(T_j \circ S_j)(x) - x \to 0$ $\ast$-strongly as $j \to \infty$. Then \cite[Theorem XV.3.1]{Ta03b} implies that $M = Q \otimes R_\infty$ is amenable. Then $Q$ is amenable and full and so $Q$ is a type I factor.

(4) Fix $\theta \in \Aut(M)$ and denote by $N := M \rtimes_\theta \bb{Z}$ the corresponding crossed product von Neumann algebra. Then $M\subseteq N \hookrightarrow M^\u$ with the composition being the diagonal embedding. Denote by $u  = (u_n)^\u \in \mathcal U(N) \subset \mathcal U(M^\u)$ the canonical unitary implementing the action $\bb{Z} \curvearrowright^\theta M$. Then for every $x\in M$, we have $\theta(x) = u x u^*$ and \cite[Theorem 4.1 $(\rm iv) \Rightarrow (\rm v)$]{AHHM18} shows that $\theta$ is  weakly inner. Since $M \cong M \otimes R_\infty$, \cite[Theorem F]{Ma18} implies that $\theta$ is approximately inner.

(5) It suffices to show that $(N' \cap M)' \cap M \subseteq N$. Define the amalgamated free product von Neumann algebra $Q := M \ast_N (L(\bb{Z}) \otimes N)$ with respect to the natural faithful normal conditional expectations $E : M \to N$ and $\tau_{\bb{Z}} \otimes \id_N : L(\bb{Z}) \otimes N \to N$. Then $M \subseteq Q \hookrightarrow M^\u$ with the composition being the diagonal embedding. Denote by $u \in \mathcal U(L(\bb{Z})) \subset \mathcal U(M^\u)$ the canonical Haar unitary. Then we have $u \in N' \cap Q \subseteq N' \cap M^\u = (N' \cap M)^\u$ and so we may write $u = (u_n)^\u$ where $u_n \in \mathcal U(N' \cap M)$ for every $n$. Now take now $b \in (N' \cap M)' \cap M$ and note that $b u_n = u_n b$ for every $n$ and so $b u = u b$. Since $L(\bb{Z})$ is diffuse, we have $L(\bb{Z}) \npreceq_{L(\bb{Z}) \otimes N} N$ in the sense of Popa's intertwining theory (see \cite{Po01, HI15}), whence \cite[Proposition 3.3]{Ue12} implies that $L(\bb{Z})' \cap Q = L(\bb{Z}) \otimes N$. Thus, we obtain $b \in M \cap (L(\bb{Z}) \otimes N) = N$. 
\end{proof}

\begin{remark}
All of the items in the previous theorem are appropriate generalizations of the corresponding facts about e.c.\  tracial von Neumann algebras.  Indeed, suppose that $M$ is an e.c. tracial von Neumann algebra.  Then the finite analog of (1) states that $M$ is a II$_1$ factor, which was proven in \cite{GHS13}, while the finite analog of (2) states that $M$ is McDuff, that is, tensorially absorbs the hyperfinite II$_1$ factor $R$, which was also proven in \cite{GHS13}.  The finite analog of (3) is that $M$ is not a strongly McDuff factor, where a strongly McDuff factor is one that is isomorphic to a factor of the form $Q\otimes R$, where $Q$ is a II$_1$ factor without property Gamma; this fact was proven in \cite[Proposition 6.2.11]{AGK20}.  The finite analogs of (4) and (5) have identical statements and were proven in \cite{FGHS16} and \cite[Proposition 5.16]{Go18} respectively.   
\end{remark}

Proposition \ref{prop:prime} in the appendix states that the ultrapower of a type III$_1$ factor is always a prime factor, that is, cannot be written as the tensor product of diffuse factors.  Combined with Theorem \ref{thm:ec}(2), we immediately obtain:

\begin{cor}
The class of e.c. W$^*$-probability spaces is not closed under ultrapowers.
\end{cor}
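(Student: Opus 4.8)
The plan is to derive a contradiction from the combination of Theorem~\ref{thm:ec} and Proposition~\ref{prop:prime}: the former forces every e.c.\ W$^*$-probability space to tensorially absorb the diffuse factor $R_\infty$, while the latter says that the ultrapower of a type ${\rm III}_1$ factor is prime, that is, admits no decomposition as a tensor product of diffuse factors. If the class of e.c.\ W$^*$-probability spaces were closed under ultrapowers, then a single e.c.\ object together with its ultrapower would witness both of these statements at once, which is impossible.

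In more detail, I would first record that e.c.\ W$^*$-probability spaces exist at all: the class of W$^*$-probability spaces is axiomatizable (as discussed in Section~2) and hence closed under unions of chains, so the standard Robinsonian construction produces e.c.\ models of its theory (and indeed various explicit such objects are built later via the game-theoretic method). Fix one such $(M,\varphi)$. By Theorem~\ref{thm:ec}(1), $M$ is a type ${\rm III}_1$ factor; in particular $M$ is diffuse, and by Proposition~\ref{prop:prime} the ultrapower $M^\u$ is a prime factor. Now suppose, toward a contradiction, that the class of e.c.\ W$^*$-probability spaces is closed under ultrapowers. Then $(M^\u,\varphi^\u)$ is again e.c., so Theorem~\ref{thm:ec}(2) gives $M^\u \otimes R_\infty \cong M^\u$. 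Since $R_\infty$ and $M^\u$ are both diffuse factors, this exhibits $M^\u$ as a tensor product of two diffuse factors, contradicting the primeness of $M^\u$. Hence the class is not closed under ultrapowers.

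There is essentially no obstacle here beyond correctly invoking the two ingredients; the point most worth double-checking is that the model-theoretic ultrapower $(M^\u,\varphi^\u)$ is formed with respect to an ultrafilter of the kind for which Proposition~\ref{prop:prime} applies (a good ultrafilter on a sufficiently large index set, exactly as in the Remarks following the definition of relative existential closedness), and that one interprets the ambient class as nonempty so that ``closed under ultrapowers'' is not vacuous.
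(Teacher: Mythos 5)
Your proof is correct and follows exactly the paper's intended argument: the corollary is stated as an immediate consequence of combining Proposition~\ref{prop:prime} (primeness of ultrapowers of type ${\rm III}_1$ factors) with Theorem~\ref{thm:ec}(1)--(2), which is precisely what you do. Your added remarks about the existence of e.c.\ objects and the choice of ultrafilter are fine but not essential, since the paper treats these as understood.
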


In particular, the class of e.c. W$^*$-probability spaces is not axiomatizable.  In model-theoretic language, this means:

\begin{cor}\label{nomodcomp}
The theory of W$^*$-probability spaces does not have a model companion.
\end{cor}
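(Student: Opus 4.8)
The plan is to derive this immediately from the preceding corollary by invoking the classical correspondence between model companions and existentially closed models. Let $T$ denote the theory of W$^*$-probability spaces, so that the models of $T$ are exactly the W$^*$-probability spaces; in particular the e.c.\ models of $T$ are precisely the e.c.\ W$^*$-probability spaces in the sense defined earlier in this section. Recall that the class of W$^*$-probability spaces is \emph{inductive}: an increasing union of W$^*$-probability spaces, together with the compatible states and conditional expectations, is again a W$^*$-probability space (its underlying von Neumann algebra being the ultraweak closure of the union). For the theory of an inductive class it is a standard fact of (continuous) model theory that $T$ has a model companion if and only if the class of e.c.\ models of $T$ is elementary, and that when this happens the model companion $T^{*}$ satisfies $\mathrm{Mod}(T^{*}) = \{\,\mathcal M : \mathcal M \text{ is an e.c.\ model of } T\,\}$. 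I would quote this fact; if a self-contained argument is wanted, the direction $\mathrm{Mod}(T^{*})\subseteq\{\text{e.c.\ models}\}$ is the clean one (given $\mathcal M\models T^{*}$ and an extension $\mathcal M\subseteq\mathcal N$ with $\mathcal N\models T$, embed $\mathcal N$ into some $\mathcal N'\models T^{*}$ and use model-completeness to get $\mathcal M\preceq\mathcal N'$, whence $\inf$-formulas with parameters in $\mathcal M$ take the same value in $\mathcal M$ and in $\mathcal N$), while the reverse inclusion uses inductiveness and an alternating chain of models of $T$ and $T^{*}$.

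Granting this, the argument is essentially immediate. Suppose toward a contradiction that $T$ has a model companion $T^{*}$. By the standard fact above, the class of e.c.\ W$^*$-probability spaces then coincides with $\mathrm{Mod}(T^{*})$, hence is an elementary class; in particular it is closed under ultrapowers, since the class of models of any theory is closed under ultraproducts. This directly contradicts the preceding corollary, which exhibits an e.c.\ W$^*$-probability space whose ultrapower fails to be e.c. Therefore $T$ has no model companion.

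I do not expect a genuine obstacle here: all the mathematical substance is already contained in the preceding corollary (and thus ultimately in Theorem~\ref{thm:ec}(1)--(2) together with the primeness of ultrapowers of type ${\rm III}_1$ factors, Proposition~\ref{prop:prime}), and the rest is the textbook equivalence between the existence of a model companion and the axiomatizability of the class of e.c.\ models. The only small points to check carefully are that W$^*$-probability spaces really do form an inductive class (so that the general theory applies) and that the general model-theoretic notion of ``e.c.\ model of $T$'' agrees with the notion of e.c.\ W$^*$-probability space used in this section — which it does precisely because every W$^*$-probability space is a model of $T$.
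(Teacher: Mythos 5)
Your proposal is correct and follows essentially the same route as the paper: the authors deduce from the preceding corollary that the class of e.c.\ W$^*$-probability spaces is not closed under ultrapowers, hence not axiomatizable, and then invoke exactly the standard equivalence (for an inductive theory) between the existence of a model companion and the elementarity of the class of e.c.\ models. Your additional remarks on inductiveness and on the agreement of the two notions of ``e.c.'' are the right points to check, and they hold as you say.
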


The analogous fact for tracial von Neumann algebras also holds and was the main result of \cite{GHS13}.

\subsection{The case of QWEP factors}

In \cite{FGHS16}, the authors consider the e.c. elements of the class of tracial von Neumann algebras that admit a trace-preserving embedding into the tracial ultrapower $R^\u$ of the hyperfinite II$_1$ factor $R$.  That this is a model-theoretically sensible thing to consider is substantiated by the basic fact that a tracial von Neumann algebra embeds into $R^\u$ if and only if it is a model of the \emph{universal theory} of $R$, denoted $\Th_\forall(R)$, consisting of all conditions of the form $\sup_x\varphi(x)=0$ with $\varphi(x)$ a quantifier-free formula.  As in the unrestricted case, any e.c. model of $\Th_\forall(R)$ is a McDuff II$_1$ factor with only approximately inner automorphisms.  In this case, however, one can name a concrete e.c. object, namely $R$ itself.  In fact, a positive solution to the Connes Embedding Problem is equivalent to the statement that $R$ is an e.c. tracial von Neumann algebra.

In this subsection, we consider the analogous situation for W$^*$-probability spaces.  To motivate the move that is to follow, we recall that a tracial von Neumann algebra embeds into $R^\u$ if and only if it has Kirchberg's QWEP property \cite{Ki93}.  Thus, it appears that the natural course of action to take in our current context is to consider restricting to the class of QWEP W$^*$-probability spaces.  To see that, once again, this is a natural move from the model-theoretic perspective, we recall the main result of \cite{AHW13}, which states that a von Neumann algebra has QWEP if and only if it embeds into $R_\infty^\u$.  (This is technically proven in the case that the von Neumann algebra under consideration is separably acting and the ultrafilter is a nonprincipal ultrafilter on $\bb N$; this result naturally extends to all QWEP von Neumann algebras by writing them as an increasing union of separably acting QWEP subalgebras and using an ultrafilter on a larger index set.) To see that this latter condition has model-theoretic meaning, we make the following observation.  Recall from the introduction that two W$^*$-probability spaces $(M,\varphi)$ and $(N,\psi)$ are \emph{elementarily equivalent} if and only if there are ultrafilters $\u$ and $\cal V$ such that $(M,\varphi)^\u\cong (N,\psi)^{\cal V}$. 

\begin{prop}\label{IIIee}
For any type ${\rm III}_1$ factor $M$ and faithful normal states $\varphi$ and $\psi$ on $M$, we have that $(M,\varphi)$ and $(M,\psi)$ are elementarily equivalent.
\end{prop}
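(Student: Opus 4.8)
The plan is to reduce the statement to the Connes--St\o rmer transitivity theorem for type III$_1$ factors, together with the robustness of the Ocneanu ultraproduct under norm-small perturbations of the states. Since an ultrapower is always an elementary extension, $(M,\varphi)$ is elementarily equivalent to $(M,\varphi)^\u$ and $(M,\psi)$ is elementarily equivalent to $(M,\psi)^\u$ for any nonprincipal ultrafilter $\u$; hence it suffices to produce one such $\u$ on $\bb N$ for which $(M,\varphi)^\u\cong (M,\psi)^\u$ as W$^*$-probability spaces.

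The first step is to invoke the Connes--St\o rmer homogeneity theorem for the normal state space of a type III$_1$ factor: for each $n\geq 1$ there is a unitary $u_n\in\mathcal U(M)$ with $\|\varphi - u_n\psi u_n^*\| < 1/n$, where $(u_n\psi u_n^*)(x)=\psi(u_n^* x u_n)$. Set $\varphi_n:=u_n\psi u_n^*\in\mathfrak S_{fn}(M)$; then $\Ad(u_n^*)\colon (M,\varphi_n)\to (M,\psi)$ is a state-preserving $\ast$-isomorphism, and each such map is $\|\cdot\|^\sharp$-isometric (that is, $\|\Ad(u_n^*)(x)\|^\sharp_\psi=\|x\|^\sharp_{\varphi_n}$), so taking the ultraproduct of these isomorphisms gives a W$^*$-probability space isomorphism $\prod_\u (M,\varphi_n)\cong \prod_\u (M,\psi)=(M,\psi)^\u$.

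The second step is to show that, on the other hand, $\prod_\u (M,\varphi_n)=(M,\varphi)^\u$ on the nose. For any norm-bounded sequence $(x_i)_i\in\ell^\infty(\bb N,M)$, writing $C=\sup_i\|x_i\|_\infty$, one has $|\varphi_i(x_i^* x_i)-\varphi(x_i^* x_i)|\leq \|\varphi_i-\varphi\|\cdot C^2\leq C^2/i$, and similarly for $x_i x_i^*$; hence $\lim_{i\to\u}\|x_i\|_{\varphi_i}^\sharp=\lim_{i\to\u}\|x_i\|_{\varphi}^\sharp$. Consequently the ideals $\mathfrak I_\u$ attached to the family $(M,\varphi_n)_n$ and to the constant family $(M,\varphi)$ coincide as subsets of $\ell^\infty(\bb N,M)$; therefore so do the multiplier algebras $\mathfrak M^\u$ and the quotient von Neumann algebras, and the two ultraproduct states agree because $|\varphi_i(x_i)-\varphi(x_i)|\to 0$. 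Chaining the two steps yields $(M,\varphi)^\u\cong(M,\psi)^\u$, as desired.

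The only genuinely nontrivial ingredient is the Connes--St\o rmer transitivity theorem; everything else is routine bookkeeping with the definition of the Ocneanu ultraproduct. The point requiring the most care is the identification $\prod_\u(M,\varphi_n)=(M,\varphi)^\u$ in the second step, namely verifying that a norm-null perturbation of the states leaves the $\|\cdot\|^\sharp$-ideal, the multiplier algebra, and the ultraproduct state unchanged. One should also note, for non-separable $M$, that the Connes--St\o rmer theorem still applies and that a nonprincipal ultrafilter on $\bb N$ still suffices here, precisely because the maps $\Ad(u_n^*)$ are honest $\ast$-isomorphisms of $M$ rather than merely asymptotic ones.
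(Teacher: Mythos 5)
Your proof is correct, but it takes a different route from the paper. The paper's proof is a two-line citation: by \cite[Theorem 4.20]{AH12} the state space of $M^\u$ is \emph{strictly} homogeneous, so there is a single unitary $u\in\mathcal U(M^\u)$ with $u\varphi^\u u^*=\psi^\u$ exactly, and $\Ad(u)$ gives $(M^\u,\varphi^\u)\cong(M^\u,\psi^\u)$. You instead invoke Connes--St\o rmer transitivity directly on $M$ to get unitaries $u_n$ with $\|\varphi-u_n\psi u_n^*\|<1/n$, transport $(M,\psi)^\u$ to $\prod_\u(M,\varphi_n)$ via the entrywise state-preserving isomorphisms $\Ad(u_n^*)$, and then check by hand that a norm-null perturbation of the states does not change the $\|\cdot\|^\sharp$-null ideal, the multiplier algebra, or the ultraproduct state, so that $\prod_\u(M,\varphi_n)=(M,\varphi)^\u$ on the nose. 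Since the Ando--Haagerup strict homogeneity theorem is itself deduced from Connes--St\o rmer by essentially this kind of lifting argument, you are in effect reproving the special case of that theorem for ``liftable'' states $\varphi^\u,\psi^\u$, which is all the proposition needs; what your version buys is self-containedness (only the classical transitivity theorem plus bookkeeping with the Ocneanu construction) and the fact that, as you note, an ultrafilter on $\bb N$ suffices even for non-separable $M$ because your coordinate maps are honest isomorphisms, whereas the paper has to remark that the cited homogeneity result holds for arbitrary countably incomplete ultrafilters. What the paper's route buys is brevity and a stronger tool: strict homogeneity conjugates \emph{arbitrary} faithful normal states on $M^\u$, not just lifted ones. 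All the verifications you flag (the $\|\cdot\|^\sharp$-isometry of $\Ad(u_n^*)$, the estimate $|\varphi_i(x_i^*x_i)-\varphi(x_i^*x_i)|\le C^2\|\varphi_i-\varphi\|$, and the agreement of the ultraproduct states) are indeed the right points and go through.
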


\begin{proof}
The state space of $M^\u$ is \emph{strictly homogeneous} by \cite[Theorem 4.20]{AH12}.  (We note that this result indeed holds for ultrapowers with respect to arbitrary countably incomplete ultrafilters.) Consequently, there is $u \in \mathcal U(M^\u)$ such that $u \varphi^\u u^* = \psi^\u$; the inner automorphism $\Ad(u)$ thus yields that $ (M^\u, \varphi^\u) \cong (M^\u, \psi^\u)$.
\end{proof}

\begin{remark}
The previous proposition is false for type III$_\lambda$ factors, $\lambda\in(0,1)$, even when only considering the universal theory.  Indeed, if $M$ is a type III$_\lambda$ factor, $\lambda\in (0,1)$, and $\varphi,\psi\in \frak S_{fn}(M)$ are such that $\varphi$ is periodic and $\psi$ is not, then $(M,\psi)$ cannot embed into the ultrapower of $(M,\varphi)$.
\end{remark}

A particular consequence of Proposition \ref{IIIee} is that we may unambiguously speak of the universal theory $\Th_\forall(M)$ of any type III$_1$ factor $M$, by which we mean the unique common universal theory of $(M,\varphi)$ for any $\varphi \in \frak S_{fn}(M)$.  From this point of view, the main result of \cite{AHW13} can be reworded by saying that a W$^*$-probability space $(M,\varphi)$ is QWEP if and only if it is a model of $\Th_\forall(R_\infty)$.

\begin{remark}
Most of the results of the previous subsection continue to hold when restricted to the elementary class of QWEP W$^*$-probability spaces.  More specifically, the first four items of Theorem \ref{thm:ec} as well as Corollary \ref{nomodcomp} hold when restricted to the class of QWEP W*-probability spaces.  We do not know if item (5) of Theorem \ref{thm:ec} holds in this restricted case as it is unknown if the amalgamated free product of QWEP von Neumann algebras remains QWEP. In that respect, it follows from \cite[Corollary B]{HI14} that the free product of QWEP von Neumann algebras remains QWEP. For other permanence properties, we refer to \cite[Proposition 4.1]{Oz03}. 
\end{remark}

As mentioned above, $R$ is an e.c. member of the class of QWEP II$_1$ factors.  We now prove the analogous fact in the setting of W$^*$-probability spaces:

\begin{thm}
The Araki--Woods factor $R_\infty$ is an e.c.\! QWEP W$^*$-probability space.
\end{thm}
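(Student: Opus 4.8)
The plan is to assemble the statement from the universal-theory characterization of QWEP recorded above together with the rigidity of embeddings of AFD W$^*$-probability spaces into ultrapowers of $R_\infty$ supplied by the appendix.

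Fix a faithful normal state $\rho$ on $R_\infty$. By Proposition \ref{prop:flexibility} — together with the fact that having QWEP depends only on the underlying von Neumann algebra, so that replacing a state by another faithful normal state preserves membership in the class — it suffices to show that $(R_\infty,\rho)$ is e.c.\ for the class of QWEP W$^*$-probability spaces. So let $(R_\infty,\rho)\subseteq (N,\psi)$ be an inclusion with $N$ QWEP. Since $R_\infty$ is a type III$_1$ factor, the discussion around Proposition \ref{IIIee} lets us speak of $\Th_\forall(R_\infty)=\Th_\forall(R_\infty,\rho)$, and the main result of \cite{AHW13} says precisely that $(N,\psi)$, being QWEP, is a model of this universal theory. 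As a model of the universal theory of $(R_\infty,\rho)$, it embeds into a sufficiently saturated ultrapower of $(R_\infty,\rho)$: there is an ultrafilter $\v$ — which we may take good and on an index set large enough for what follows — and an inclusion of W$^*$-probability spaces $(N,\psi)\subseteq (R_\infty,\rho)^{\v}$, i.e.\ a unital normal $\ast$-embedding $\iota: N\to R_\infty^{\v}$ with $\rho^{\v}\circ\iota=\psi$ together with a faithful normal conditional expectation $R_\infty^{\v}\to\iota(N)$ compatible with the states. Composing the given inclusion $R_\infty\subseteq N$ with $\iota$ yields a state-preserving, with-expectation embedding $\iota_0:=\iota|_{R_\infty}:(R_\infty,\rho)\to (R_\infty,\rho)^{\v}$, which a priori bears no relation to the diagonal embedding $\delta: R_\infty\hookrightarrow R_\infty^{\v}$.

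The heart of the matter is to correct $\iota_0$ into $\delta$, and this is where the hypothesis that $R_\infty$ is AFD is used. By the uniqueness results for embeddings of AFD W$^*$-probability spaces into ultrapowers of $R_\infty$ established in the appendix, any two state-preserving, with-expectation embeddings of the separable AFD W$^*$-probability space $(R_\infty,\rho)$ into $(R_\infty,\rho)^{\v}$ are intertwined by a state-preserving automorphism of $(R_\infty,\rho)^{\v}$ — the homogeneity of the state space of the ultrapower (\cite[Theorem 4.20]{AH12}) is what allows one to match up the finite-dimensional layers of $R_\infty$ level by level. Applying this to $\iota_0$ and $\delta$ gives an automorphism $\alpha$ of $R_\infty^{\v}$ with $\rho^{\v}\circ\alpha=\rho^{\v}$ and $\alpha\circ\iota_0=\delta$. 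Then $\alpha\circ\iota:(N,\psi)\to (R_\infty,\rho)^{\v}$ is again an inclusion of W$^*$-probability spaces (a state-preserving embedding, with expectation $\alpha\circ(\,\cdot\,)\circ\alpha^{-1}$), and its restriction to $R_\infty$ is $\delta$. Thus $(R_\infty,\rho)\subseteq (N,\psi)\subseteq (R_\infty,\rho)^{\v}$ with the composite being the diagonal inclusion, which is exactly the assertion that $(R_\infty,\rho)$ is e.c.\ in $(N,\psi)$. Since $R_\infty$ is itself QWEP (it is AFD, hence embeds with expectation into its own Ocneanu ultrapower), this shows that $R_\infty$ is an e.c.\ QWEP W$^*$-probability space.

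The only serious obstacle is the rigidity input just invoked — that two embeddings of the AFD space $(R_\infty,\rho)$ into $(R_\infty,\rho)^{\v}$ differ by a $\rho^{\v}$-preserving automorphism of the ultrapower — together with the attendant bookkeeping for conditional expectations; this is precisely what the appendix on embeddings of AFD W$^*$-probability spaces provides. Everything else (the reduction to a single state via Proposition \ref{prop:flexibility}, and the passage from being a model of $\Th_\forall(R_\infty)$ to embedding with expectation into $(R_\infty,\rho)^{\v}$) is soft.
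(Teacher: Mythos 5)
Your proposal is correct and takes essentially the same route as the paper: QWEP (via the universal-theory rewording of \cite{AHW13}) yields a state-preserving, with-expectation embedding of $(N,\psi)$ into an ultrapower of $(R_\infty,\rho)$, and the appendix's uniqueness result (Corollary \ref{cor:uniqueness-1}, which already handles an arbitrary faithful normal state, so your reduction to a fixed $\rho$ is harmless) corrects the composite to the diagonal embedding. The only cosmetic difference is that the paper phrases the correction as conjugation by a unitary in the centralizer of $\rho^{\v}$ rather than an abstract state-preserving automorphism, which is the same step.
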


\begin{proof}
Let $N$ be an e.c.\! QWEP W*-probability space such that $R_\infty\subseteq N$.  Since $N$ is QWEP, there is an embedding $N\hookrightarrow R_\infty^\u$.  By Corollary \ref{cor:uniqueness-1}, up to conjugating by a unitary, we may suppose that the composite embedding is the diagonal embedding.  This shows that $R_\infty$ is e.c.\! in $N$ and hence is an e.c.\! QWEP W$^*$-probability space.  (Note that we have used the fact that being e.c.\! does not depend on the choice of state when we conjugated by a unitary.)
\end{proof}

\subsection{Building W$^*$-probability spaces by games}

We now introduce a method for building W$^*$-probability spaces first introduced in \cite{Go21} (based on the discrete case presented in \cite{Ho85}).  This method goes under many names, such as \textit{Henkin constructions}, \textit{model-theoretic forcing}, or \textit{building models by games}.

We fix a countably infinite set $C$ of distinct symbols that are to represent generators of a separable W$^*$-probability space that two players (traditionally named $\forall$ and $\exists$) are going to build together (albeit adversarially).
The two players take turns playing finite sets of expressions of the form $\left|\theta(c)-r\right|<\epsilon$, where $c$ is a tuple of variables from $C$, $\theta(c)$ is some atomic formula, and each player's move is required to extend (that is, contain) the previous player's move.  The exact form an atomic formula depends on which language we are considering for describing W$^*$-probability spaces, but in either case, they roughly correspond to expressions of the form $\varphi(p(c))$, where $p$ is some expression involving the *-algebra operations as well as modular automorphisms and $\varphi$ is a generic symbol for the state.  (In the case of Dabrowski's language, one is only allowed to use ``smeared'' multiplication in such expressions.)  These plays of the game are called (open) \emph{conditions}.  The game begins with $\forall$'s move.  Moreover, these conditions are required to be \emph{satisfiable}, meaning that there should be some W$^*$-probability space $(M,\varphi)$ and some tuple $a$ from $M$ such that $\left|\theta(a)-r\right|<\epsilon$ for each such expression in the condition.  We play this game for countably many rounds.
At the end of this game, we have enumerated some countable, satisfiable set of expressions. Provided that the players address a certain ``dense'' set of conditions infinitely often, they can ensure that the play is \emph{definitive}, meaning that the final set of expressions yields complete information about all atomic formulae in the variables $C$ (that is, for each atomic formula $\theta(c)$, there should be a unique $r$ such that the play of the game implies that $\theta(c)=r$) and that this data describes a countable, dense $*$-subalgebra of a unique W$^*$-probability space, which is called the \textit{compiled structure}.

\begin{defn}
Given a property $P$ of W$^*$-probability spaces, we say that $P$ is an \textit{enforceable} property is there a strategy for $\exists$ so that, regardless of player $\forall$'s moves, if $\exists$ follows the strategy, then the compiled structure will have property $P$.
\end{defn}

\begin{fact}\label{conjlemma}

\

\begin{enumerate}
    \item (Conjunction lemma \cite[Lemma 2.4]{Ho85})  If $P_n$ is an enforceable property for each $n\in \mathbb N$, then so is the conjunction $\bigwedge_n P_n$.
    \item (\cite[Proposition 2.10]{Ho85} Being e.c. is enforceable.
\end{enumerate}
\end{fact}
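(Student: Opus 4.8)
The plan is to prove the two parts of Fact~\ref{conjlemma} by direct game-theoretic arguments in the style of Hodges.

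For part~(1), the strategy is the classical interleaving argument. Suppose each $P_n$ is enforceable, witnessed by a strategy $\sigma_n$ for $\exists$. I would describe a single strategy $\sigma$ for $\exists$ that ``simulates'' all the $\sigma_n$ simultaneously. The key combinatorial point is that a play of the game consists of countably many rounds, so we may partition the rounds of $\exists$ into countably many infinite blocks $B_1, B_2, \dots$ (say, using a bijection $\mathbb{N} \to \mathbb{N} \times \mathbb{N}$), dedicating block $B_n$ to following $\sigma_n$. During the rounds in $B_n$, player $\exists$ pretends that all moves made by the opponent (and by $\exists$ in other blocks) since $\exists$'s last move in $B_n$ constitute a single move by $\forall$ in an auxiliary play $\pi_n$ of the game, and responds according to $\sigma_n$ applied to $\pi_n$. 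Since $\sigma_n$ is a winning strategy regardless of $\forall$'s moves, the auxiliary play $\pi_n$ (which is a legitimate play of the game, being a subsequence of the real play with merged $\forall$-moves) yields a compiled structure with property $P_n$; but the compiled structure of $\pi_n$ is the same as the compiled structure of the real play, since both are determined by the complete set of expressions enumerated, which coincide. Hence the real compiled structure has every $P_n$, i.e.\ has $\bigwedge_n P_n$. The one subtlety to check is that the simulated moves remain satisfiable conditions extending the actual current condition --- this is automatic because $\exists$'s moves in $\sigma$ are literally the moves prescribed by the $\sigma_n$ on honest sub-plays, and satisfiability of the union follows from satisfiability of the real play.

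For part~(2), I would show that ``the compiled structure $(M,\varphi)$ is e.c.'' is enforceable, using part~(1) to reduce to enforcing countably many simpler properties. Recall that by the semantic characterization, $(M,\varphi)$ is e.c.\ precisely when every existential formula with parameters from $M$ that holds in some extension already holds in $(M,\varphi)$. Since there are only countably many existential formulas (up to the usual syntactic bookkeeping) and the parameters can be taken from a countable dense subset --- which here is the $*$-algebra generated by $C$ --- it suffices by the conjunction lemma to show, for each existential formula $\exists y\, \theta(x,y)$ and each tuple $c$ from $C$, that the property ``$\inf_y \theta^{(M,\varphi)}(c^{(M,\varphi)},y) \le \inf\{\theta^{(N,\psi)}(c,y) : (M,\varphi)\subseteq(N,\psi),\, y \in N\}$'' is enforceable; together with the trivial reverse inequality from $(M,\varphi)\subseteq(N,\psi)$, this gives existential closedness. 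To enforce one such property: at the relevant round, $\exists$ looks at the current (finite, satisfiable) condition $p$, which is realized by some tuple in some W$^*$-probability space; $\exists$ considers any extension $(N,\psi)$ realizing $p$ in which the infimum value $r$ is (approximately) attained by some $b \in N$, and then plays finitely many expressions of the form $|\theta'(c,d) - s| < \varepsilon$ recording approximate atomic data of $b$ (with $d$ fresh symbols from $C$), chosen so that the resulting condition is still satisfiable (witnessed in $(N,\psi)$). This forces the compiled structure to contain an element $d^{(M,\varphi)}$ with $\theta(c^{(M,\varphi)}, d^{(M,\varphi)})$ within $\varepsilon$ of $r$. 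Running a density argument so that this is done for progressively smaller $\varepsilon$ pins the infimum in $M$ down to the external infimum.

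\textbf{Main obstacle.} The delicate point is the density/enumeration argument in part~(2): one must be careful that the ``certain dense set of conditions'' alluded to in the construction of the compiled structure is rich enough that $\exists$ can address, infinitely often, every (existential formula, parameter tuple, tolerance) triple while still producing a definitive play, and that the witnesses $d$ introduced by $\exists$ genuinely end up in the compiled structure with the intended approximate atomic type. This is exactly the content of \cite[Proposition 2.10]{Ho85} transported to the continuous setting, and since the bookkeeping is standard but slightly technical --- involving the specific form of atomic formulas in whichever language (Dabrowski's or that of \cite{GHS18}) one fixes --- I would cite Hodges and \cite{Go21} for the routine verification rather than reproduce it, noting only that nothing about the W$^*$-probability space setting (as opposed to the tracial one) changes the argument.
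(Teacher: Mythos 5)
Your proposal is correct and follows essentially the same route as the paper, which offers no proof of this Fact but simply cites Hodges \cite{Ho85} (with the continuous adaptation in \cite{Go21}): part (1) is the standard interleaving/bookkeeping strategy, exploiting that the compiled structure depends only on the union of the conditions played, and part (2) is the standard witness-adding argument over a countable enumeration of existential formulas, parameters from $C$, and rational tolerances. The one point to state precisely in part (2) is that the target value $r$ must be the infimum of $\inf_y\theta(c,y)$ taken over \emph{all} W$^*$-probability spaces realizing the current finite condition $p$ (so that every eventual extension of the compiled structure, being itself a model of $p$, has value at least $r$), not the value computed in one particular chosen extension; with that reading, your argument is exactly the one the cited references carry out.
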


Item (2) in the previous fact indicates the significance of this technique of building W$^*$-probability spaces in connection with the study of e.c. W$^*$-probability spaces.

\begin{defn}
A W$^*$-probability space $(M,\varphi)$ is said to be \textbf{enforceable} if the property of being isomorphic to $(M,\varphi)$ is an enforceable property.
\end{defn}

Clearly, if an enforceable W$^*$-probability space exists, then it is unique.

One can relativize the above context by considering only QWEP W$^*$-probability spaces.  One can then speak of enforceable properties of QWEP W$^*$-probability spaces and ask about the existence of the enforceable W$^*$-probability space.

In the analogous game for II$_1$ factors, it was shown in \cite[Theorem 5.2]{Go21} that a positive solution to the Connes Embedding Problem is equivalent to the statement that $R$ is the enforceable tracial von Neumann algebra and that, when restricted to the context of QWEP tracial von Neumann algebras, $R$ is indeed the enforceable object.  It is worth asking if the same is true in the case of QWEP W$^*$-probability algebras.  The answer is actually negative and rests on the following:

\begin{prop}
There is no faithful normal state $\varphi$ on $R_\infty$ such that $(R_\infty,\varphi)$ embeds into all e.c.\! QWEP W$^*$-probability spaces.
\end{prop}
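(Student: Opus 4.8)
The plan is to exploit the contrast between type III$_1$ factors, which admit a unique (up to the relevant equivalence) state-theoretic structure, and type III$_\lambda$ factors for $\lambda\in(0,1)$, which come with a genuinely rigid "periodic versus non-periodic state" dichotomy (compare the Remark following Proposition \ref{IIIee}). Specifically, I would argue by contradiction: suppose $\varphi$ is a faithful normal state on $R_\infty$ such that $(R_\infty,\varphi)$ embeds with expectation into every e.c.\ QWEP W$^*$-probability space. Since $(R_\infty,\varphi)$ is itself QWEP, there is some e.c.\ QWEP W$^*$-probability space $(N,\psi)$ containing $(R_\infty,\varphi)$ as an inclusion (run the game, or take any e.c.\ envelope inside $R_\infty^{\u}$). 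The key point will be to produce a \emph{single} e.c.\ QWEP W$^*$-probability space $(N,\psi)$ into which no copy of $(R_\infty,\varphi)$ can embed with expectation, for any fixed $\varphi$; this already contradicts the hypothesis.

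To build such an obstruction, I would start from a type III$_\lambda$ factor with $\lambda\in(0,1)$ equipped with a \emph{non-periodic} faithful normal state — for concreteness, take $R_\lambda$ with a state $\omega$ whose modular operator has an eigenvalue whose logarithm is irrationally related to $\log\lambda$, so that the state is not lacunary/periodic. First I would check that $(R_\lambda,\omega)$ is QWEP (it embeds into $R_\infty^{\u}$, being AFD), and then, using Fact \ref{conjlemma}(2), pass to an e.c.\ QWEP W$^*$-probability space $(N,\psi)$ containing $(R_\lambda,\omega)$. By Theorem \ref{thm:ec}(1) (in its QWEP-restricted form, valid as noted in the Remark after Theorem \ref{thm:ec}), $N$ is a type III$_1$ factor. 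Now if $(R_\infty,\varphi)$ embedded with expectation into $(N,\psi)$ for this particular choice — wait, that embedding would be fine since $N$ is III$_1$ and everything is flexible. So this naive approach does not immediately yield a contradiction; the obstruction has to be more delicate.

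The correct line is instead to exploit that $(R_\infty,\varphi)$, having a \emph{fixed} modular spectrum, is incompatible with being embeddable-with-expectation into every III$_1$ W$^*$-probability space. Concretely: for an inclusion with expectation $(R_\infty,\varphi)\subseteq(N,\psi)$, item (2) of the equivalence recorded after the Definition forces $\sigma^{\psi}$ to restrict to $\sigma^{\varphi}$ on $R_\infty$, hence $\sigma(\Delta_\varphi)\subseteq\sigma(\Delta_\psi)$ and, more restrictively, the point spectrum $\sigma_p(\Delta_\varphi)$ sits inside $\sigma_p(\Delta_\psi)$ in the appropriate sense. So I would choose, for \emph{each} putative $\varphi$, an e.c.\ QWEP W$^*$-probability space $(N_\varphi,\psi)$ whose state $\psi$ has trivial point spectrum $\sigma_p(\Delta_\psi)=\{1\}$ — such a space exists because the generic (enforceable, or at least a game-built) e.c.\ QWEP object can be arranged to have this property: being a III$_1$ factor with a state of "maximal modular continuity" is (by the flexibility results and the game machinery of Subsection 3.4, together with the observation that $(M^{\u},\varphi^{\u})$ has a strictly homogeneous state space as in the proof of Proposition \ref{IIIee}) not obstructed, while any faithful normal state on $R_\infty$ necessarily has a nontrivial atom in its modular spectrum (e.g.\ $\varphi$ on the AFD III$_1$ factor written as a Powers-type infinite tensor product always has eigenvalues of $\Delta_\varphi$ different from $1$, since each finite-dimensional tensor factor contributes eigenvalues). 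This contradicts the containment of point spectra above.

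The main obstacle I anticipate is the last assertion: showing that \emph{every} faithful normal state $\varphi$ on $R_\infty$ has $\sigma_p(\Delta_\varphi)\neq\{1\}$ (equivalently, $\varphi$ is never "strictly outer" in the modular sense) — or, dually, that one can always build an e.c.\ QWEP W$^*$-probability space whose distinguished state has point spectrum $\{1\}$. For the former, one uses that $R_\infty$ is AFD of type III$_1$; any faithful normal state is approximately an infinite tensor product state on matrix blocks (by Connes–Haagerup), and such product states have $\Delta_\varphi$ with a rich point spectrum unless the factor is trivial — I would make this precise via the structure of $R_\infty=\bigotimes(M_{n_k}(\bC),\omega_k)$ and the fact that $\Delta_\varphi$ is a (closure of a) tensor product of the finite-dimensional modular operators $\Delta_{\omega_k}$, each of which has a nontrivial eigenvalue as soon as $\omega_k$ is not a trace, while at least one $\omega_k$ must be non-tracial for the factor to be type III. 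The matching construction of $(N,\psi)$ with $\sigma_p(\Delta_\psi)=\{1\}$ is then an enforceability argument: "having a cyclic-and-separating vector whose modular operator has no eigenvalues other than $1$" can be phrased as a countable family of conditions each individually satisfiable inside a large III$_1$ ultrapower (whose state space is strictly homogeneous), so by Fact \ref{conjlemma}(1)–(2) it is enforceable in conjunction with being e.c.\ QWEP. Once both halves are in place, the proposition follows immediately.
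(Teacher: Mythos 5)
Your proposal has a genuine gap, in fact two, and they are in tension with each other. First, the claim that \emph{every} faithful normal state $\varphi$ on $R_\infty$ has $\sigma_p(\Delta_\varphi)\neq\{1\}$ is not established by your argument: the tensor-product computation of $\Delta_\varphi$ only applies to (quasi-)product states with respect to some matrix decomposition, and a general faithful normal state on $R_\infty$ is not of this form; approximating it $*$-strongly by product states gives no control whatsoever on the point spectrum of its modular operator. Second, the construction of the obstruction space is proposed via enforceability, but the property ``$\sigma_p(\Delta_\psi)=\{1\}$'' cannot be enforceable for QWEP W$^*$-probability spaces: by Proposition \ref{hypenf} being AFD is enforceable, so by the Conjunction Lemma (Fact \ref{conjlemma}(1)) you could enforce both simultaneously; but an AFD W$^*$-probability space carries an almost periodic state (remark after Definition \ref{defn:AFD}), i.e.\ $\Delta_\psi$ is diagonalizable, and a diagonalizable modular operator with no eigenvalue other than $1$ forces $\Delta_\psi=1$, i.e.\ a trace, which is impossible on a type III factor. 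So the generic e.c.\ QWEP space cannot have your spectral property, and you give no alternative construction of an e.c.\ QWEP space with it. (The one step that is fine is the monotonicity $\sigma_p(\Delta_\varphi)\subseteq\sigma_p(\Delta_\psi)$ for an inclusion with state-preserving expectation.)

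For comparison, the paper's proof avoids point-spectrum considerations entirely and needs no new e.c.\ spaces: since $R_\infty$ is e.c.\ QWEP and being e.c.\ is state-independent, the hypothesis already yields embeddings $(R_\infty,\varphi)\hookrightarrow(R_\infty,\psi)$ for \emph{every} faithful normal state $\psi$ on $R_\infty$. Taking $\psi$ with trivial centralizer forces $(R_\infty)_\varphi=\bC 1$ (centralizers are monotone for inclusions with expectation), while taking $\psi=\varphi_{\lambda_1}\otimes\varphi_{\lambda_2}$, which is almost periodic, forces $\varphi$ to be almost periodic, and then \cite[Lemma 2.1]{Ue11} gives $(R_\infty)_\varphi\neq\bC 1$ --- a contradiction. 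If you want to salvage your approach, you would need either a proof that every faithful normal state on $R_\infty$ has a nontrivial modular eigenvalue (which I do not believe is available, and is certainly not proved here) or a concrete e.c.\ QWEP W$^*$-probability space whose state provably has trivial modular point spectrum; the centralizer/almost-periodicity route is the way the paper sidesteps both demands.
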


\begin{proof}
By contradiction, assume that there exists a faithful normal state $\varphi$ on $R_\infty$ such that for any other faithful normal state $\psi$ on $R_\infty$ we have $(R_\infty, \varphi) \hookrightarrow (R_\infty, \psi)$.

It is a standard fact that there exists a faithful normal state $\psi$ on $R_\infty$ for which $(R_\infty)_\psi = {\bb C}$. Since $(R_\infty, \varphi) \hookrightarrow (R_\infty, \psi)$, it follows that $(R_\infty)_\varphi = {\bb C}1$. Next, identify $(R_\infty, \psi) = (R_{\lambda_1} \otimes R_{\lambda_2}, \varphi_{\lambda_1} \otimes \varphi_{\lambda_2})$ for appropriately chosen $\lambda_1$ and $\lambda_2$. Then $\psi$ is an almost periodic state on $R_\infty$ in the sense that the corresponding modular operator $\Delta_{\psi}$ is diagonalizable on $L^2(R_\infty)$. Since $(R_\infty, \varphi) \hookrightarrow (R_\infty, \psi)$, it follows that $\varphi$ is an almost periodic state and so $(R_\infty)_\varphi \neq {\bb C}1$ by \cite[Lemma 2.1]{Ue11}. This is a contradiction.
\end{proof}

\begin{cor}
There is no enforceable QWEP W$^*$-probability space.
\end{cor}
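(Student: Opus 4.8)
The plan is to suppose, toward a contradiction, that $(M,\varphi)$ is an enforceable QWEP W$^*$-probability space, and to show that $M$ must be isomorphic to $R_\infty$ and that $(M,\varphi)$ embeds with expectation into every e.c.\ QWEP W$^*$-probability space --- which is exactly what the preceding proposition rules out.

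First I would record that $(M,\varphi)$ is itself e.c.\ within the QWEP class. Indeed, ``being e.c.'' is enforceable by Fact \ref{conjlemma}(2) and ``being isomorphic to $(M,\varphi)$'' is enforceable by hypothesis, so the conjunction lemma, Fact \ref{conjlemma}(1), makes the property ``isomorphic to $(M,\varphi)$ and e.c.'' enforceable; the structure compiled under a strategy enforcing this is $(M,\varphi)$ and is e.c. Hence $M$ is a type III$_1$ factor by the QWEP-restricted version of Theorem \ref{thm:ec}(1), and $M$ is separable since the game builds a separable W$^*$-probability space.

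The heart of the argument is the claim that the enforceable structure $(M,\varphi)$ embeds, with expectation, into every e.c.\ QWEP W$^*$-probability space $(N,\psi)$. The point is that every game condition --- and, more importantly, every extension of a game condition lying over a previously fixed finite tuple of interpreted generators --- can be realized inside $(N,\psi)$ itself: if $(P,\phi)$ is a QWEP W$^*$-probability space witnessing satisfiability of the condition, then the free product $(N,\psi)\ast(P,\phi)$ is again QWEP (the free product of QWEP von Neumann algebras is QWEP, \cite[Corollary B]{HI14}) and contains $(N,\psi)$ and $(P,\phi)$ with expectation, so existential closedness of $(N,\psi)$ within the QWEP class gives an embedding $(P,\phi)\hookrightarrow (N,\psi)^\u$ over $N$, and then a suitable coordinate of an ultrapower representative of the image tuple witnesses the condition in $(N,\psi)$ itself. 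One can therefore run $\exists$'s enforcing strategy while maintaining a sequence of state-preserving interpretations of the generators in $(N,\psi)$, each a controlled perturbation of the previous one, converging to a total embedding; because the embedding respects the state and the modular automorphisms at rational times, the resulting inclusion is automatically with expectation. Applying this with $(N,\psi)=(R_\infty,\psi)$, which is e.c.\ QWEP for every faithful normal state $\psi$ on $R_\infty$ (by the theorem that $R_\infty$ is an e.c.\ QWEP W$^*$-probability space together with the fact that existential closedness does not depend on the choice of state), I obtain $(M,\varphi)\hookrightarrow (R_\infty,\psi)$ for every such $\psi$.

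From here the conclusion is immediate. Fixing one faithful normal state $\psi$ on $R_\infty$, the embedding $(M,\varphi)\hookrightarrow (R_\infty,\psi)$ exhibits $M$ as a von Neumann subalgebra of the injective factor $R_\infty$ onto which there is a faithful normal conditional expectation, so $M$ is injective; being a separable injective type III$_1$ factor, $M\cong R_\infty$ by \cite{Co85, Ha85}. Thus $\varphi$ becomes a faithful normal state on $R_\infty$ such that $(R_\infty,\varphi)$ embeds into every e.c.\ QWEP W$^*$-probability space, contradicting the preceding proposition. I expect the one point requiring genuine care to be the coherence in the third step --- upgrading the step-by-step realizability of conditions in $(N,\psi)$ to an honest embedding of the compiled structure, which is the usual genericity mechanism behind this construction; the remaining deductions are routine.
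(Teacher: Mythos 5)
Your proposal has exactly the same skeleton as the paper's proof: assume $(M,\varphi)$ is enforceable, note it is e.c., argue that it embeds into every e.c.\ QWEP W$^*$-probability space, specialize to $(R_\infty,\psi)$ for every faithful normal state $\psi$ (legitimate, since $R_\infty$ is e.c.\ QWEP and existential closedness is state-independent), deduce $M\cong R_\infty$ (your injectivity argument correctly fills in what the paper leaves implicit), and contradict the preceding proposition. The difference is that the paper treats the key fact --- the enforceable structure embeds into every e.c.\ structure of the class --- as a citation to \cite[Section 6]{Go21}, whereas you attempt to prove it in-line, and that is where there is a genuine gap.

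The gap is precisely the step you flag and then wave off as ``the usual genericity mechanism.'' What your free-product argument (using \cite[Corollary B]{HI14} and existential closedness of $(N,\psi)$) actually gives is that each finite open condition arising in the play is witnessed, approximately, by \emph{some} tuple in $(N,\psi)$. That is the easy half. It does not follow that the compiled structure embeds into $(N,\psi)$: the witnessing tuples chosen at successive stages need not converge, since approximate agreement on finitely many atomic formulas in no way forces successive witnesses to be close in $\|\cdot\|_\psi^\sharp$, and a fixed $\exists$-strategy can keep playing conditions on old constants that force you to abandon your current interpretation. Without an additional idea, finite approximate realizability in $N$ only yields an embedding of the compiled structure into $(N,\psi)^\u$, and an ultrapower embedding is useless here --- the preceding proposition, with which you must contradict, concerns honest inclusions with expectation into $R_\infty$ itself (its proof uses triviality of the centralizer and almost periodicity of states on $R_\infty$, properties that do not pass to $R_\infty^\u$). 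So the ``controlled perturbation of the previous one, converging to a total embedding'' claim is exactly the nontrivial content and is not justified as stated. Either cite \cite[Section 6]{Go21} for this fact, as the paper does, or supply a genuine argument for the convergence/coherence step.
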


\begin{proof}
Suppose, towards a contradiction, that $(M,\varphi)$ is the enforceable QWEP W$^*$-probability space.  Then $(M,\varphi)$ is e.c.\! and embeds into all e.c.\! QWEP W$^*$-probability spaces (see \cite[Section 6]{Go21}).  In particular $(M,\varphi)$ embeds into $(R_\infty,\psi)$ for all faithful normal states $\psi$ on $R_\infty$.  This implies that $M\cong R_\infty$, and we obtain a contradiction with the previous proposition.
\end{proof}

On the other hand, we do have the following proposition:

\begin{prop}\label{hypenf}
The property of being approximately finite dimensional is an enforceable property of QWEP W$^*$-probability spaces. 
\end{prop}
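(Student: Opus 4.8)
The plan is to run the game of Subsection 3.4, relativised to the class of QWEP W$^*$-probability spaces, and to give a strategy for $\exists$ that traps every finite tuple of generators, at every scale, inside a finite-dimensional globally $\sigma^\varphi$-invariant subalgebra. The one external input is a reformulation of QWEP-satisfiability. Fix once and for all a presentation $(R_\infty,\rho)\cong\bigotimes_{n\in\bN}(M_d(\bC),\nu)$ as an infinite tensor product with product state, and let $B_1\subseteq B_2\subseteq\cdots$ be the finite-dimensional subalgebras $B_m=M_d(\bC)^{\otimes m}\otimes 1\cong M_{d^m}(\bC)$; each $B_m$ is globally $\sigma^\rho$-invariant, carries the $\rho$-preserving slice conditional expectation, and their union is $\ast$-strongly dense in $R_\infty$. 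Since a W$^*$-probability space is QWEP precisely when it is a model of $\Th_\forall((R_\infty,\rho))$ (see \cite{AHW13} together with Proposition \ref{IIIee}), every open condition satisfiable in the QWEP class is realizable, to any prescribed tolerance, by an explicit tuple inside $(R_\infty,\rho)$ itself: a realization in a QWEP space $(N,\chi)$ is transported through an embedding $(N,\chi)\hookrightarrow(R_\infty,\rho)^\u$ and then descends to $\u$-almost every coordinate, using that the Ocneanu ultrapower computes the state and the modular automorphisms coordinatewise (see \cite{AH12}).

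Fix an enumeration $c_1,c_2,\dots$ of the generator symbols. By the Conjunction Lemma (Fact \ref{conjlemma}(1)) it suffices to prove, for each pair $(n,k)$ of positive integers, that the following property $P_{n,k}$ is enforceable for QWEP W$^*$-probability spaces: \emph{there is a finite-dimensional subalgebra $B\subseteq M$ with $\sigma^\varphi_t(B)=B$ for all $t\in\bR$ and with $\|E_B(c_i)-c_i\|^\sharp_\varphi\le 1/k$ for $1\le i\le n$}, where $E_B$ is the $\varphi$-preserving conditional expectation onto $B$. Indeed, if the compiled structure $(M,\varphi)$ satisfies all the $P_{n,k}$, then every finite tuple of generators is, at every scale, approximated in $\|\cdot\|^\sharp_\varphi$ by a finite-dimensional $\sigma^\varphi$-invariant subalgebra with expectation; since the generators generate $M$, since $\|\cdot\|^\sharp_\varphi$ induces the $\ast$-strong topology on bounded sets, and since $\varphi$-preserving conditional expectations are contractive for $\|\cdot\|^\sharp_\varphi$, this approximation property passes to all of $M$. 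As a $\sigma$-finite injective von Neumann algebra is AFD, it follows that $(M,\varphi)$ is AFD in the sense of Definition \ref{defn:AFD}.

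To enforce $P_{n,k}$, $\exists$ uses a bookkeeping that devotes infinitely many of her turns to acting on $(n,k)$. On such a turn, with current condition $p$ mentioning $c_1,\dots,c_n$, she realizes $p$ inside $(R_\infty,\rho)$ by a concrete tuple $\bar a$, chooses $m$ with $\|E_{B_m}(a_i)-a_i\|^\sharp_\rho<1/k$ for $i\le n$, sets $B:=B_m\cong M_D(\bC)$ and $f_i:=E_B(a_i)\in B$, fixes matrix units $(e_{pq})_{1\le p,q\le D}$ of $B$ chosen so that $\sigma^\rho_t(e_{pq})=\gamma_{pq}^{t}\,e_{pq}$ for suitable scalars $\gamma_{pq}$, and records $f_i=\sum_{p,q}\mu^{i}_{pq}\,e_{pq}$. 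She then plays the extension $q\supseteq p$ obtained by adjoining new generator symbols for the $e_{pq}$ and the $f_i$ together with finitely many atomic expressions asserting, to a small tolerance: that the $\varphi$-values of $\ast$-words in the $e_{pq}$ match those of the genuine matrix units of $B$ (pinning $W^*(\bar e)$ to a copy of $M_D(\bC)$); that $\sigma^\varphi_t(e_{pq})$ is approximately $\gamma_{pq}^{t}e_{pq}$ at each relevant rational $t$ (making $W^*(\bar e)$ globally $\sigma^\varphi$-invariant); that $f_i$ is approximately $\sum_{p,q}\mu^{i}_{pq}e_{pq}$; and that $\|c_i-f_i\|^\sharp_\varphi\le 1/k$ for $i\le n$. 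All of these hold for the chosen realization inside $R_\infty$, so $q$ is QWEP-satisfiable and is a legal move. Finally, $\exists$'s bookkeeping also schedules, cofinally often, indefinite tightening of the tolerances attached to each named system $\bar e$, so that in the compiled structure these assertions hold exactly: $W^*(\bar e)$ is genuinely finite-dimensional, genuinely $\sigma^\varphi$-invariant, and contains each $f_i$, while $\|c_i-f_i\|^\sharp_\varphi\le 1/k$. Hence $(M,\varphi)\models P_{n,k}$.

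The conceptual content — moving a QWEP-satisfiable condition into $R_\infty$ and exploiting that $R_\infty$ is AFD — is immediate from \cite{AHW13}, so the real work is bookkeeping. One must weave into a single strategy for $\exists$ the tasks for all pairs $(n,k)$, the introduction of the auxiliary generators naming matrix units, and the perpetual tolerance-refinement of each named system, in such a way that every named system becomes exact in the limit and every finite tuple of generators — including the auxiliary ones that $\exists$ herself introduces — is trapped at every scale; this interleaving is the main point requiring care, and it is handled by the standard density argument behind the game, as in \cite{Go21, Ho85}. A technically smoother route that avoids the perpetual refinement is to invoke a perturbation lemma upgrading an approximately $\sigma^\varphi$-invariant approximate system of matrix units to a nearby exact one at the cost of replacing $1/k$ by $2/k$; either way, $P_{n,k}$ is enforceable, and the proposition follows.
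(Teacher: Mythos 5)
Your overall route is the same as the paper's: reduce via the Conjunction Lemma (Fact \ref{conjlemma}), use that a QWEP-satisfiable condition is (approximately) realizable inside $(R_\infty,\rho)$ because the QWEP W$^*$-probability spaces are exactly the models of $\Th_\forall(R_\infty)$, and read off matrix units from the finite-dimensional filtration of $R_\infty$ to extend the condition. The genuine problem is in the extra machinery you add to pass to the compiled structure. Your primary device --- scheduling ``perpetual tolerance-refinement'' of each named system $\bar e$ so that the matrix-unit relations and the $\sigma^\varphi$-covariance hold \emph{exactly} in the limit --- does not work: $\forall$ moves too, and once you have asserted the relations for $\bar e$ only up to a tolerance $\delta$, he may legally extend the condition by pinning a defect, e.g.\ asserting that $\|e_{11}^2-e_{11}\|^\sharp_\varphi$ lies within $s/2$ of some $s$ with $0<s<\delta$; this is satisfiable (perturb your exact realization in $R_\infty$ slightly, using that the finitely many strict inequalities in the current condition hold with positive slack), and from then on no tightening below $s/2$ is ever a legal move. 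So the named $W^*(\bar e)$ need not be finite dimensional, let alone $\sigma^\varphi$-invariant, in the compiled structure; an enforceability argument can never rely on named witnesses becoming exact, only on producing, for each tolerance fixed at the time of introduction, fresh approximate witnesses.

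Consequently the whole weight of your proof falls on the alternative you mention only in passing: a perturbation lemma converting an approximately $\sigma^\varphi$-covariant approximate system of matrix units (with covariance asserted only at finitely many rational times) into a nearby exact, globally $\sigma^\varphi$-invariant finite-dimensional subalgebra with expectation. In the tracial case such perturbation statements are standard, but in the present type ${\rm III}$ setting with full modular invariance required this is a nontrivial claim that you neither prove nor cite; relatedly, the inference at the end of your second paragraph (``a $\sigma$-finite injective von Neumann algebra is AFD, hence $(M,\varphi)$ is AFD in the sense of Definition \ref{defn:AFD}'') conflates injectivity of the underlying algebra with AFD-ness of the pair $(M,\varphi)$, which forces $\varphi$ to be almost periodic and does not follow from injectivity alone. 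The paper's proof only asks, for each tuple and each $\epsilon$, for matrix units and coefficients approximately spanning the tuple, which is exactly what the satisfiability-in-$R_\infty$ step provides, and its later applications only use injectivity of the compiled algebra; so either prove the invariant perturbation lemma you invoke, or drop the invariance requirement and deduce injectivity via a semidiscreteness argument as in the proof of Theorem \ref{thm:ec}(3).
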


\begin{proof}
By the Conjunction Lemma, it suffices to show that, given any open condition $\Sigma$, any finite number $c_1,\ldots,c_n$ of constants, and any $\epsilon>0$, there are matrix units $(e_{ij})$ for some matrix algebra and some complex coefficients $\alpha^k_{ij}$ for $k=1,\ldots,n$ such that $\Sigma\cup\{\|c_k-\sum_{ij}\alpha^k_{ij}e_{ij}\|_\varphi^\#<\epsilon\}$ is itself a condition.  However, this follows from the fact that $\Sigma$, being satisfiable in some QWEP W$^*$-probability space, must also be satisfiable in $R_\infty$.
\end{proof}

It is quite interesting that player $\exists$ can always enforce the underlying von Neumann algebra of the compiled W$^*$-probability space to be $R_\infty$ although there is no single state on $R_\infty$ that can be enforced.

An important class of e.c. structures is the class of finitely generic structures.  We end this section by briefly discussing this class.  First, since the class of W$^*$-probability spaces has the joint embedding property (meaning that any two W$^*$-probability spaces can be jointly embedded into a third), it follows from \cite[Corollary 2.16]{Go21} that, for each sentence $\sigma$ in the language of W$^*$-probability spaces, there is a unique real number $r$ such that the property $\sigma=r$ is an enforceable property; in this case, we set $\sigma^f$ to denote this unique $r$.

\begin{defn}
A W$^*$-probability space $(M,\varphi)$ is called \textit{finitely generic} if it satisfies the following two properties:
\begin{enumerate}
    \item For every sentence $\sigma$ in the language of W$^*$-probability spaces, we have that $\sigma^{(M,\varphi)}=\sigma^f$.
    \item For any W$^*$-probability space $(N,\psi)$ elementarily equivalent to $(M,\varphi)$ for which $(M,\varphi)\subseteq (N,\psi)$, we have that this inclusion is an elementary embedding.
\end{enumerate}
\end{defn}

The following was shown in \cite[Section 3]{Go21}, generalizing the corresponding classical results:

\begin{fact}

\

\begin{enumerate}
    \item The property of being finitely generic is enforceable.  In particular, finitely generic structures exist.
    \item Finitely generic structures are e.c.
\end{enumerate}
\end{fact}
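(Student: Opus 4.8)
The plan is to transfer to continuous logic the classical theory of finite (Robinson) forcing together with its generic structures; the continuous version of this apparatus is developed in \cite[Section~3]{Go21} for tracial von Neumann algebras, and the same development goes through here essentially verbatim, the only class-specific inputs being that W$^*$-probability spaces form an elementary class whose model-theoretic ultraproduct is the Ocneanu ultraproduct, and that this class has the joint embedding property (already used above to make $\sigma^f$ well defined). The first step is to introduce the forcing relation $p \Vdash \sigma$ between open conditions $p$ and sentences $\sigma$ with parameters from $C$, along with its weak (real-valued) refinement, exactly as in \cite{Ho85} and \cite{Go21}, and to record its structural features: \emph{persistence} of positively built statements (including those built with $\inf$) under extension of conditions, the fact that a quantifier-free formula occurring in a condition is forced by it, and the \emph{density of decisions} -- given any condition, any formula, and any $\varepsilon > 0$, some extension weakly forces the value of the formula to within $\varepsilon$.

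For (1), by the Conjunction Lemma (Fact~\ref{conjlemma}(1)) it suffices to realize each of the two clauses in the definition of finite genericity as a countable conjunction of enforceable properties, after the harmless replacement of the set of all formulas by a countable set dense up to uniform approximation. Clause~(1) is immediate: for each sentence $\sigma$, the number $\sigma^f$ is by definition the unique real for which ``$\sigma = \sigma^f$'' is enforceable (and it is here that joint embedding enters, via \cite[Corollary~2.16]{Go21}), so ``$\sigma^{(M,\varphi)} = \sigma^f$'' is enforceable, and one conjoins over $\sigma$. For clause~(2), I would have $\exists$ enforce the stronger property that the compiled structure $(M,\varphi)$ is \emph{forcing generic}, meaning that for every formula $\chi(\bar x)$ and every tuple $\bar c$ from $C$, the value $\chi^{(M,\varphi)}(\bar c)$ coincides with the value weakly forced by a finite subfamily of the atomic data of the play: by the density of decisions, $\exists$ can achieve this for a fixed triple $(\chi, \bar c, \varepsilon)$, and the Conjunction Lemma then makes the full property enforceable. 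One then checks, by the classical induction on formula complexity using the structural features of forcing recorded above, that a forcing generic structure satisfies clause~(2). Thus being finitely generic is enforceable, and applying Fact~\ref{conjlemma}(2) once more even yields an e.c.\ finitely generic space; in particular finitely generic structures exist.

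For (2), let $(M,\varphi)$ be finitely generic -- equivalently, by the previous paragraph, forcing generic -- and let $(M,\varphi) \subseteq (N,\psi)$. I must show that every existential formula with parameters from $M$ has the same value in $M$ as in $N$; writing such a formula as $\inf_{\bar y}\rho(\bar a, \bar y)$ with $\rho$ quantifier-free and $\bar a$ from $M$, the inequality that the value in $M$ is at least the value in $N$ is automatic, since quantifier-free formulas are preserved along the inclusion and the infimum in $N$ is taken over a larger set. For the reverse inequality let $r$ be the value in $N$ and fix $\varepsilon > 0$; then $(N,\psi)$ contains a tuple witnessing $\rho(\bar a, \bar y) < r + \varepsilon$, so -- introducing fresh constants from $C$ for the witness and using that a quantifier-free formula in a condition is forced by it -- there is a finite condition, consisting of a finite piece of the atomic diagram of $(M,\varphi)$ together with this witnessing data and satisfiable because it is realized in $(N,\psi)$, that forces ``$\inf_{\bar y}\rho(\bar a,\bar y) \le r + \varepsilon$''. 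Since $(M,\varphi)$ is forcing generic, its true value for this formula is the weakly forced one, hence is $\le r + \varepsilon$; letting $\varepsilon \to 0$ gives that the value in $M$ is $\le r$, as required.

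The main obstacle is the construction of continuous finite forcing itself and the verification of its combinatorial properties -- defining weak forcing of real values, proving persistence and density of decisions, and organizing the Henkin witnesses inside the countable constant set $C$ -- together with the W$^*$-probability-space-specific point that atomic formulas here involve the modular automorphism group and ``smeared'' multiplication, so that satisfiability of conditions must be checked to be compatible with the Ocneanu-ultraproduct semantics. All of this is carried out in \cite{Go21} in the tracial case and transfers with only cosmetic changes, so in the write-up I would isolate and state the requisite forcing lemmas and cite \cite{Go21} and \cite{Ho85} for their proofs rather than reproduce them.
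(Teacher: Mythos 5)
The paper offers no proof of this Fact at all: it is quoted directly from \cite[Section 3]{Go21}, so your overall plan of transferring the classical finite-forcing apparatus of \cite{Ho85} via \cite{Go21} is exactly the route the paper relies on, and your sketch of part (1) (Conjunction Lemma plus the definition of $\sigma^f$ via joint embedding, plus enforcing a ``forcing generic'' presentation) is in line with that. However, your argument for part (2) has a genuine gap at the pivot ``finitely generic --- equivalently, by the previous paragraph, forcing generic.'' The previous paragraph only shows that forcing genericity is enforceable and that forcing generic structures satisfy the two clauses of the paper's definition; it does not show the converse, and enforceability cannot supply it: a structure can satisfy one enforceable property while failing another (each condition $\sigma=\sigma^f$ is enforceable, yet a given structure may satisfy some of these and not others). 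Since the paper defines ``finitely generic'' by the two clauses, your proof of (2) applies only to forcing generic structures unless you separately prove the implication ``finitely generic $\Rightarrow$ forcing generic,'' which is the nontrivial direction and is nowhere argued.

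The standard repair avoids that implication altogether: let $T^f$ be the finite forcing companion (the conditions $\sigma=\sigma^f$), which is complete because the class has joint embedding, and which is a companion of the theory of W$^*$-probability spaces in the sense that it has the same universal consequences; hence every W$^*$-probability space embeds into a model of $T^f$. Given $(M,\varphi)$ finitely generic and $(M,\varphi)\subseteq (N,\psi)$, embed $(N,\psi)$ into some $(P,\chi)\models T^f$; by clause (1) and completeness, $(P,\chi)$ is elementarily equivalent to $(M,\varphi)$, so clause (2) makes $(M,\varphi)\subseteq (P,\chi)$ elementary, and since existential values only decrease along the inclusions $M\subseteq N\subseteq P$, they agree in $M$ and $N$. (Separately, even for a forcing generic $M$ your last inference is too quick: the condition you build contains witness constants realized in $N$, so it is not a piece of $M$'s play and genericity of $M$ says nothing directly about it; one must argue by contradiction, using persistence and consistency of weak forcing, that no piece of $M$'s play can force the value to exceed $r+2\varepsilon$ --- a routine fix with the lemmas you list, but it should be said.)
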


In \cite[Corollary 6.4]{FGHS16}, it was shown that $\R$ is a finitely generic QWEP tracial von Neumann algebra.  We prove an analogous result here:

\begin{thm}\label{fingen}
There is a faithful normal state $\varphi$ on $R_\infty$ so that $(R_\infty,\varphi)$ is a finitely generic QWEP von Neumann algebra.
\end{thm}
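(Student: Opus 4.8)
The plan is to combine the two enforceability facts already at our disposal: that the property of being finitely generic is enforceable (the last Fact in the excerpt) and that the property of being approximately finite dimensional is an enforceable property of QWEP W$^*$-probability spaces (Proposition \ref{hypenf}). By the Conjunction Lemma (Fact \ref{conjlemma}(1)), the conjunction of these two properties is enforceable within the class of QWEP W$^*$-probability spaces, so there exists an enforceable-in-the-generic-sense W$^*$-probability space $(M,\varphi)$ that is simultaneously finitely generic and has underlying von Neumann algebra approximately finite dimensional. The point of the theorem is then to identify $M$: an AFD $\sigma$-finite factor which is e.c.\ (hence, by Theorem \ref{thm:ec}(1), of type III$_1$) must be $R_\infty$, since $R_\infty$ is the unique AFD type III$_1$ factor (Connes--Haagerup, as recalled in the preliminaries). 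Since finitely generic structures are e.c.\ (Fact, part (2)), and e.c.\ QWEP W$^*$-probability spaces are type III$_1$ factors (the QWEP analog of Theorem \ref{thm:ec}(1), noted in the remark following that theorem), the underlying algebra of the compiled structure is a factor, so AFD type III$_1$ forces $M\cong R_\infty$. This produces a faithful normal state $\varphi$ on $R_\infty$ making $(R_\infty,\varphi)$ a finitely generic QWEP W$^*$-probability space.

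The key steps, in order, are: (i) invoke Proposition \ref{hypenf} for AFD-ness and the Fact that finite genericity is enforceable, both in the QWEP setting; (ii) apply the Conjunction Lemma to get a single strategy for $\exists$ enforcing both; (iii) observe that the compiled structure under this strategy is finitely generic, hence e.c.\ among QWEP W$^*$-probability spaces, hence a factor (by the QWEP version of Lemma \ref{basicecfactrs}(1)/Theorem \ref{thm:ec}(1)), and of type III$_1$; (iv) combine AFD with type III$_1$ and uniqueness of the AFD III$_1$ factor to conclude the underlying von Neumann algebra is $R_\infty$; (v) read off the faithful normal state $\varphi$ from the compiled structure. One should also remark, as the surrounding text already emphasizes, that although $R_\infty$ is enforceable as a von Neumann algebra, no single state on it is enforceable, so $(R_\infty,\varphi)$ is not the enforceable QWEP W$^*$-probability space — there is none; this is consistent with finite genericity, which only pins down the value of every sentence (via $\sigma\mapsto \sigma^f$) and the elementary-substructure condition, not the isomorphism type.

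The main obstacle is really a bookkeeping one rather than a conceptual one: making sure that all the enforceability machinery (the Conjunction Lemma, the existence and e.c.-ness of finitely generic structures, and Proposition \ref{hypenf}) genuinely transfers to the \emph{relativized} game played only among QWEP W$^*$-probability spaces. The excerpt already flags that one can relativize the whole framework to QWEP (``One can relativize the above context by considering only QWEP W$^*$-probability spaces''), and Proposition \ref{hypenf} is stated precisely in that relativized form, so the transfer is legitimate; the only care needed is that the joint embedding property, used to define $\sigma^f$ and to cite \cite[Corollary 2.16]{Go21}, still holds within QWEP W$^*$-probability spaces — which it does, since $R_\infty^\u$ receives all separable QWEP W$^*$-probability spaces with expectation (the Ando--Haagerup--Winslow characterization). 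Given all that, the proof is short; I would phrase it as: apply the Conjunction Lemma to the two enforceable properties, note the compiled structure is finitely generic and AFD, hence an e.c.\ (so type III$_1$) AFD factor, hence isomorphic to $R_\infty$, and extract $\varphi$.
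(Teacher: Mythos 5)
Your proposal is correct and follows essentially the same route as the paper: apply the Conjunction Lemma to the enforceability of being AFD among QWEP W$^*$-probability spaces (Proposition \ref{hypenf}) and the enforceability of being finitely generic. The paper's proof is just a one-line version of this; your additional steps (finitely generic $\Rightarrow$ e.c.\ $\Rightarrow$ type III$_1$ factor, then AFD plus uniqueness of the AFD III$_1$ factor identifies the algebra as $R_\infty$) merely make explicit the identification the paper leaves implicit.
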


\begin{proof}
Simply apply the Conjunction Lemma to the fact that both being approximately finite dimensional is enforceable (Proposition \ref{hypenf}) and being finitely generic is enforceable.  
\end{proof}

\subsection{Open questions}

We end this section with some open questions:

As shown in the last subsection, the enforceable QWEP factor does not exist.  As mentioned above, a positive solution to the Connes Embedding Problem is equivalent to the statement that $R$ is the enforceable tracial von Neumann algebra.  Due to the recent negative solution of the Connes Embedding Problem \cite{JNVWY20}, we conclude that $R$ is not the enforceable tracial von Neumann algebra.  However, whether or not the enforceable tracial von Neumann algebra exists is an interesting open question.  The analogous question for W$^*$-probability spaces is also open:

\begin{question}
Does the enforceable QWEP W$^*$-probability space exist?
\end{question}

In connection with Theorem \ref{fingen}, we ask if the analog of Proposition \ref{flexible} holds for finitely generic structures:

\begin{question}
If $(M,\varphi)$ is finitely generic for some $\varphi\in \frak S_{fn}(M)$, do we have that $(M,\psi)$ is finitely generic for \emph{every} $\psi\in \frak S_{fn}(M)$?
\end{question}





For the next question, recall that the class of W$^*$-probability spaces whose underlying von Neumann algebra is a III$_\lambda$ factor (for some fixed $\lambda\in (0,1)$) and whose state is a periodic state forms an inductive class.  This result is clear from the fact that this class is closed under ultraproducts and ultraroots (see \cite[Theorem 6.11]{AH12}) and inductive limits.  Explicit axioms for this class were given by Dabrowski in \cite{Da19}.  Let $T_\lambda$ denote some collection of axioms for this class.  Since $T_\lambda$ is an inductive theory, it has e.c. models.  

\begin{question}
What can we say about the class of e.c. models of $T_\lambda$?  If we restrict to QWEP such objects, is $R_\lambda$, with its unique periodic state, an e.c.\! model?  Does being e.c.\! depend on the state?
\end{question}

We should note that the argument appearing in the proof of Theorem \ref{thm:ec}(2) above shows that e.c. models of $T_\lambda$ tensorially absorb $R_\lambda$ (equipped with its unique periodic state) and the same is true for the e.c. elements of the class of QWEP models of $T_\lambda$.  

We also note that by the main result of \cite{AHW13}, given any $\lambda\in (0,1)$, a von Neumann algebra is QWEP if and only if it embeds into $R_\lambda^\u$ with expectation.  If one could improve this result so that any QWEP model $(M,\varphi)$ of $T_\lambda$ embeds into $(R_\lambda,\varphi_\lambda)^\u$, then one could mimic the proof of \ref{hypenf} to show that being hyperfinite is an enforceable property with respect to the class of QWEP models of $T_\lambda$.  Since being e.c. is also an enforceable property, one could conclude that $(R_\lambda,\varphi_\lambda)$ is an e.c. element of the class of QWEP models of $T_\lambda$.  In fact, we have the following:

\begin{prop}
The following statements are equivalent:
\begin{enumerate}
    \item Every QWEP model of $T_\lambda$ embeds into $(R_\lambda,\varphi_\lambda)^\u$.
    \item Hyperfiniteness is an enforceable property of QWEP models of $T_\lambda$.
    \item $(R_\lambda,\varphi_\lambda)$ is the enforceable QWEP model of $T_\lambda$.
    \item $(R_\lambda,\varphi_\lambda)$ is an e.c. element of the class of QWEP models of $T_\lambda$.
    
\end{enumerate}
\end{prop}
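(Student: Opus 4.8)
The plan is to establish the cycle of implications $(1)\Rightarrow(2)\Rightarrow(3)\Rightarrow(4)\Rightarrow(1)$, where throughout ``enforceable'' refers to the game played relative to the class $\mathcal{C}$ of QWEP models of $T_\lambda$. The implications $(1)\Rightarrow(2)\Rightarrow(3)$ and $(3)\Rightarrow(4)$ are essentially the content of the discussion preceding the statement; the genuinely new step is $(4)\Rightarrow(1)$.

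\textbf{$(1)\Rightarrow(2)$.} I would copy the proof of Proposition~\ref{hypenf} almost verbatim. By the Conjunction Lemma~\ref{conjlemma}(1) it is enough to show that for every open condition $\Sigma$, finitely many constants $c_1,\dots,c_n$, and $\epsilon>0$ there are matrix units $(e_{ij})$ of a finite-dimensional algebra (with a compatible state) and scalars $\alpha^k_{ij}$ such that $\Sigma\cup\{\,\|c_k-\sum_{ij}\alpha^k_{ij}e_{ij}\|_\varphi^\#<\epsilon : 1\le k\le n\,\}$ is satisfiable in a member of $\mathcal{C}$. Since $\Sigma$ is satisfiable in some QWEP model of $T_\lambda$ and, by $(1)$, that model embeds into $(R_\lambda,\varphi_\lambda)^\u$ as a W$^*$-probability space, $\Sigma$ is satisfiable in $(R_\lambda,\varphi_\lambda)^\u$; as the atomic formulas of $\Sigma$ are computed in the Ocneanu ultrapower by the limit along $\u$, $\Sigma$ is already satisfiable in $(R_\lambda,\varphi_\lambda)$, say by a tuple $a$. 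Since $R_\lambda$ is approximately finite dimensional, $a$ lies within $\epsilon$ in $\|\cdot\|_{\varphi_\lambda}^\#$ of a tuple of the required form, with the $e_{ij}$ inside a finite-dimensional unital $\ast$-subalgebra of $R_\lambda$; as $(R_\lambda,\varphi_\lambda)\in\mathcal{C}$, this exhibits the desired condition. Hence approximate finite dimensionality is enforceable for $\mathcal{C}$.

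\textbf{$(2)\Rightarrow(3)$ and $(3)\Rightarrow(4)$.} Being e.c.\ is an enforceable property of $\mathcal{C}$ (the proof recalled for \ref{conjlemma}(2) applies verbatim), so by \ref{conjlemma}(1) there is a strategy forcing the compiled structure to be simultaneously approximately finite dimensional and e.c.\ for $\mathcal{C}$. Any compiled structure in this game is a separable QWEP model of $T_\lambda$, hence a separable type III$_\lambda$ factor; being also AFD it is $R_\lambda$ by Connes' theorem~\cite{Co75b}, and, its state being periodic, it is equipped with the unique periodic faithful normal state $\varphi_\lambda$. Thus this strategy forces isomorphism with $(R_\lambda,\varphi_\lambda)$, which is therefore the enforceable QWEP model of $T_\lambda$; this gives $(3)$. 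For $(3)\Rightarrow(4)$: the enforceable structure has every enforceable property (enforce ``$\cong(R_\lambda,\varphi_\lambda)$'' together with the property in question, using \ref{conjlemma}(1)), and being e.c.\ for $\mathcal{C}$ is such a property, so $(R_\lambda,\varphi_\lambda)$ is an e.c.\ element of $\mathcal{C}$.

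\textbf{$(4)\Rightarrow(1)$.} Given an arbitrary $(M,\varphi)\in\mathcal{C}$, I would form $(N,\psi):=(M,\varphi)\otimes(R_\lambda,\varphi_\lambda)$ with $\psi=\varphi\otimes\varphi_\lambda$, and then invoke $(4)$ for the inclusion $(R_\lambda,\varphi_\lambda)\subseteq(N,\psi)$ realized by $y\mapsto 1\otimes y$ (with conditional expectation $\varphi\otimes\id_{R_\lambda}$): this yields $(R_\lambda,\varphi_\lambda)\subseteq(N,\psi)\subseteq(R_\lambda,\varphi_\lambda)^\u$ with the composite the diagonal embedding, and restricting to the subalgebra $M\otimes 1\cong M$ (with conditional expectation $\id_M\otimes\varphi_\lambda$) gives $(M,\varphi)\hookrightarrow(R_\lambda,\varphi_\lambda)^\u$, which is $(1)$. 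The crux of this step --- and the part I expect to be the main obstacle --- is checking that $(N,\psi)$ really belongs to $\mathcal{C}$: that $\psi$ is periodic of period $\frac{2\pi}{|\log\lambda|}$ is immediate from periodicity of $\varphi$ and $\varphi_\lambda$, but that $N$ is a type III$_\lambda$ factor requires the classical fact that the tensor product of two type III$_\lambda$ factors is again of type III$_\lambda$ (pinned down, e.g., via Connes' $S$- or $T$-invariant of a tensor product, \cite{Co72}), and that $N$ is QWEP requires the permanence of QWEP under von Neumann algebra tensor products (\cite[Proposition~4.1]{Oz03}). Granting these, the remaining verifications are routine manipulations of conditional expectations and unwindings of the definitions of inclusion, existential closedness, and enforceability.
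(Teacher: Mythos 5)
The paper states this proposition without a written proof; the only guidance it gives is the preceding paragraph, which sketches exactly your first three implications ((1) gives enforceability of hyperfiniteness by mimicking Proposition~\ref{hypenf}, and combining with the enforceability of being e.c.\ yields (3) and (4)). Your cycle follows that sketch, and your $(4)\Rightarrow(1)$ — tensoring an arbitrary QWEP model $(M,\varphi)$ of $T_\lambda$ with $(R_\lambda,\varphi_\lambda)$, using that the resulting pair is again a QWEP model of $T_\lambda$ (type III$_\lambda$ via the $S$/$T$-invariant computation, QWEP via Ozawa's permanence), applying e.c.-ness of $(R_\lambda,\varphi_\lambda)$ in the class, and then restricting to $M\otimes 1$ — is the genuinely new step, and it is correct: the composition of inclusions with expectation is an inclusion with expectation, so $(M,\varphi)\hookrightarrow(R_\lambda,\varphi_\lambda)^\u$ in the sense of the paper.

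Two points deserve more care than you give them. First, in $(2)\Rightarrow(3)$ the phrase ``its state being periodic, it is equipped with the unique periodic faithful normal state $\varphi_\lambda$'' is doing real work: a III$_\lambda$ factor carries periodic states of period $k\cdot\frac{2\pi}{|\log\lambda|}$ for $k>1$, and an AFD model of $T_\lambda$ built from such a state would not be isomorphic to $(R_\lambda,\varphi_\lambda)$ as a W$^*$-probability space. The step is valid only under the paper's convention that the states of models of $T_\lambda$ are $\frac{2\pi}{|\log\lambda|}$-periodic (as in Dabrowski's axiomatization), together with the uniqueness, up to conjugacy by an automorphism, of the $\frac{2\pi}{|\log\lambda|}$-periodic state on $R_\lambda$ — a fact the paper asserts parenthetically in its conventions but which you should cite explicitly, since it is exactly what identifies the compiled pair with $(R_\lambda,\varphi_\lambda)$ rather than just its underlying algebra with $R_\lambda$. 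Second, ``any compiled structure in this game is a separable QWEP model of $T_\lambda$'' is not automatic: membership in the class is itself an enforceable property (because the class is inductive, being cut out by $T_\lambda$ together with the universal theory of $R_\infty$), and $\exists$ must enforce it alongside AFD-ness and e.c.-ness via the Conjunction Lemma. With these two points made explicit, the argument is complete and consistent with the route the paper intends.
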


Whether or not the above statements indeed hold, by Corollary \ref{cor:uniqueness-lambda}, $(R_\lambda,\varphi_\lambda)$ is an e.c.\  model of $\Th_\forall(R_\lambda,\varphi_\lambda)$.

In connection with the previous question, one might also ask the following:

\begin{question}
Suppose that $(M,\varphi)$ is a type III$_\lambda$ factor equipped with a periodic state.  Is $(M,\varphi)$ an e.c.\! model of $T_\lambda$ if and only if $M_\varphi$ is an e.c.\! II$_1$ factor?
\end{question}


An even more basic question about models of $T_\lambda$ arises:

\begin{question}
If $M$ is a III$_\lambda$ factor and $\varphi$ and $\psi$ are both periodic states on $M$, must $(M,\varphi)$ and $(M,\psi)$ have the same universal theory?
\end{question}

Our final question returns to the general study of e.c.\! W$^*$-probability spaces:

\begin{question}
Are any two e.c.\! (QWEP) W$^*$-probability spaces elementarily equivalent?
\end{question}

The analogous question for II$_1$ factors remains unsettled but is presumed to have a negative answer.

\section{Other model-theoretic results}

\subsection{Axiomatizability results}

In \cite{AH12}, it was shown that, for any $\lambda\in (0,1]$, the Ocneanu ultraproduct of a family of type III$_\lambda$ factors is once again a type III$_\lambda$ factor and a $\sigma$-finite von Neumann algebra is a type III$_\lambda$ factor if the same is true of an Ocneanu ultrapower.  By the ``soft'' test for axiomatizability (see \cite[Proposition 5.14]{BBHU08}), this says that the class of III$_\lambda$ factors (again, for fixed $\lambda\in (0,1]$) is an axiomatizable class.  However, this test does not give us explicit axioms for these axiomatizable classes.  Nevertheless, it is possible to at least describe the quantifier complexity of such axiomatizations.

In order to state such quantifier complexity results and the test we use for obtaining them, we define a $\forall_n$-sentence to be one of the form 
$$\sup_{x_1}\inf_{x_2}\cdots Q_{x_n}\theta(x_1,\ldots,x_n),$$ where each $x_i$ is a finite block of variables, $\theta$ is a quantifier-free formula, and $Q=\sup$ if $n$ is odd and $Q=\inf$ if $n$ is even.  The notion of a $\exists_n$-sentence is defined analogously, beginning with a block of $\inf$ quantifiers instead of $\sup$ quantifiers.  By using dummy variables, we note that any $\forall_n$- or $\exists_n$-sentence is automatically both $\forall_{n+1}$ and $\exists_{n+1}$.  Finally, we say that an axiomatizable class is $\forall_n$-axiomatizable (resp. $\exists_n$-axiomatizable) if there is a set $T$ of axioms for the class consisting solely of conditions of the form $\sigma=0$ with $\sigma$ a nonnegative $\forall_n$-sentence (resp. a $\exists_n$-sentence).

In classical logic, given an axioimatizable class, the \emph{Keisler Sandwich Theorem} \cite{Ke60} can be used to prove the existence of an axiomatization of a particular kind of quantifier complexity.  In our results below, we use (a modified version of) the Keisler Sandwich Theorem, adapted to the setting of continuous logic.  Since neither the continuous version of the Keisler Sandwich Theorem, nor the variant presented here, have appeared in the literature before, we include a proof in Appendix B.

\begin{defn}
Given structures $M$ and $N$ (in the same language) and $n\geq 1$, an \textbf{$(M,N,n)$-ultrapower sandwich} is a chain of embeddings of the form $$M\hookrightarrow N\hookrightarrow M^{\u_1}\hookrightarrow N^{\u_2}\hookrightarrow \cdots \hookrightarrow Q^{\u_{n-1}},$$ where:
\begin{itemize}
    \item $\u_1,\ldots,\u_{n-1}$ are ultrafilters,
    \item $Q=M$ if $n$ is even and $Q=N$ if $n$ is odd, and
\item all compositions $M^{\u_{k-1}}\hookrightarrow M^{\u_{k+1}}$ and $N^{\u_{k-1}}\hookrightarrow N^{\u_{k+1}}$ are the diagonal embeddings (which in particular implies that $\u_{k+1}\cong \u_{k-1}\otimes \mathcal{V}$ for some ultrafilter $\mathcal{V}$).
\end{itemize}
\end{defn}


\begin{thm}[Keisler Sandwich Theorem (Ultrapower version)]\label{keisler}
Suppose that $T$ is a theory.  Then:
\begin{enumerate}
    \item $T$ is $\forall_n$-axiomatizable if and only if:  whenever there is an $(M,N,n)$-ultrapower sandwich and $N\models T$, then $M\models T$.
    \item $T$ is $\exists_n$-axiomatizable if and only if:  whenever there is an $(M,N,n)$-ultrapower sandwich and $M\models T$, then $N\models T$.
\end{enumerate}
\end{thm}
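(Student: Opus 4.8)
The plan is to prove both parts by comparing a purported $\forall_n$-axiomatization of $T$ with the canonical candidate: let $T_{\forall_n}$ be the set of all conditions $\sigma\dotminus r=0$ where $\sigma$ is a $\forall_n$-sentence and $r\in\bR$ satisfies $\sigma^{N}\le r$ for every $N\models T$ (so $T_{\forall_n}$ is the ``$\forall_n$-part'' of $T$). Each $\sigma\dotminus r$ is again a $\forall_n$-sentence and every model of $T$ satisfies $T_{\forall_n}$, so for part (1) it is enough to show that the sandwich condition holds for $T$ if and only if every model of $T_{\forall_n}$ models $T$. Part (2) is dual: one replaces $\forall_n$ by $\exists_n$ and swaps the roles of $M$ and $N$ throughout, so I will describe only part (1).

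\textbf{Easy direction of (1).} Suppose $T$ is $\forall_n$-axiomatized by conditions $\{\sigma_j=0\}$, and let $M=M_0\hookrightarrow M_1\hookrightarrow\cdots\hookrightarrow M_n$ be an $(M,N,n)$-ultrapower sandwich with $N=M_1\models T$. The key computation is a ``leap-frog'' estimate: for every $\forall_n$-sentence $\sigma$ one has $\sigma^{M_0}\le\sigma^{M_1}$. This is proved by stripping the quantifiers of $\sigma$ one at a time and using three features of the chain: the diagonal embeddings $M_k\hookrightarrow M_{k+2}$ are elementary, so values of subformulas transfer between $M_k$ and $M_{k+2}$ on parameters from $M_k$; restricting a $\sup$ to a subset can only decrease its value; and at an $\inf$-quantifier one may pick a near-optimal witness one step up the chain, then ``walk'' the evaluation context up along the plain inclusions and back down along an elementary ultrapower embedding. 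After processing all $n$ quantifiers in this fashion one lands back in $M_0$, the quantifier-free core being preserved by every embedding. Applying this to $\sigma=\sigma_j$ (normalized so that $\sigma_j^{N}=0$) gives $\sigma_j^{M_0}\le\sigma_j^{M_1}=0$, whence $M\models T$.

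\textbf{Hard direction of (1).} Conversely, assume the sandwich condition and let $M\models T_{\forall_n}$; it then suffices to exhibit some $N\models T$ and an $(M,N,n)$-ultrapower sandwich, after which the sandwich condition yields $M\models T$. I would build the sandwich by induction on $n$. The base case $n=1$ is the standard fact that a model of the universal part of $T$ embeds into a model of $T$. For the inductive step the heart of the matter is a compactness lemma: if $M\models T_{\forall_n}$, then there is $N\models T$ with $M\subseteq N$ such that $M$ is ``$\exists_{n-1}$-closed'' in $N$, i.e.\ every $\exists_{n-1}$-sentence with parameters from $M$ that approximately holds in $N$ already approximately holds in $M$. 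The point is that a hypothetical failure of this lemma, unwound through continuous compactness, produces a $\forall_n$-sentence $\sigma$ with $T\models\sigma=0$ but $\sigma^{M}>0$, contradicting $M\models T_{\forall_n}$: the outer universal quantifier of $\sigma$ comes from universally closing over the named parameters of $M$, the next block of existentials witnesses a disjunction of finitely many ``bad'' existential conditions, and the remaining $\exists_{n-1}$-structure is inherited from the witnessing sentences. The $\exists_{n-1}$-closedness then lets $N$ be embedded over $M$ into an elementary extension $M'\succeq M$ (the usual diagram argument: $\mathrm{eldiag}(M)\cup\mathrm{atdiag}(N)$ is satisfiable precisely because $M$ realizes every existential formula over $M$ realized in $N$), arranged so that $N$ is ``$\exists_{n-2}$-closed'' in $M'$, which feeds the induction hypothesis to the pair $N\subseteq M'$ from the other side and produces the tail of the chain. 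Finally the resulting tower of ordinary elementary extensions is turned into a tower of ultrapowers with diagonal embeddings by the ultrapower form of the Keisler--Shelah theorem: fixing a sufficiently good ultrafilter $\u$ on a large index set, each elementary extension in the chain embeds elementarily over its base into the appropriate iterated ultrapower $M^{\u},M^{\u\otimes\u},\dots$ (resp.\ $N^{\u},\dots$), which is exactly the shape demanded of an $(M,N,n)$-ultrapower sandwich.

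\textbf{Main obstacle.} The delicate step is the inductive construction in the hard direction: one must identify exactly which bounded-complexity ``closure'' relation of one model inside the next is needed at each stage so that the recursion genuinely closes, and one must interleave these compactness-produced embeddings with the Keisler--Shelah upgrades without the saturation demands snowballing down the chain. All of this has to be carried out with the approximate semantics of continuous logic, so the finitary ``$\varepsilon$-witness'' forms of the diagram and compactness arguments must be set up with care; this is where the bulk of the work in Appendix~B will lie.
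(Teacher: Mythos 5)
Your route is genuinely different from the paper's: the paper first proves a structure-level statement (its Theorem B.2: $M\models\Th_{\forall_n}(N)$ if and only if there is an $(M,N',n)$-ultrapower sandwich with $N'$ an elementary extension of $N$) by induction on $n$ in the expanded language $L_\exists$ that has a predicate for each existential formula; the inductive step is immediate because a sandwich with its last term deleted is a chain of existential (hence $L_\exists$-) embeddings, and quantifier-free $L_\exists$-formulas are both $\forall_2$ and $\exists_2$ over $L$. The statement about theories then follows by the same compactness computation you describe. Within your plan, the leap-frog argument for the easy direction is correct (and more self-contained than the paper's), and your compactness lemma ($M\models T_{\forall_n}$ yields $N\models T$ with $M$ being $\exists_{n-1}$-closed in $N$) together with the subsequent diagram step are the right continuous analogues of the classical tower argument.

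The gap lies in the step you defer: converting the tower of elementary extensions into an ultrapower sandwich. The first replacement is fine: choosing $j_1\colon M_1\to M^{\u_1}$ elementary over $M$, the $\exists_1$-closedness of $N$ in $M_1$ transfers along $j_1$, so saturation yields $g\colon M^{\u_1}\to N^{\u_2}$ with $g\circ j_1|_N$ the diagonal. But at the next step (so already for $n\ge 4$) you need $g(M^{\u_1})$ to be existentially closed in $N^{\u_2}$ in order to map $N^{\u_2}$ into $(M^{\u_1})^{\mathcal V}$ over $M^{\u_1}$, and nothing forces this: $g$ is produced by saturation and bears no relation to the tower element $N_1$, so the closedness of $M_1$ in $N_1$ transfers to nothing. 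Hence ``build the elementary tower, then upgrade by Keisler--Shelah'' does not close; one must instead run a single induction whose conclusion is already about ultrapower sandwiches, producing at each stage the elementary-extension witness by compactness and immediately replacing it by the appropriate ultrapower, transferring the relevant $\exists_k$-closedness along the elementary replacement map before continuing --- which is exactly what the paper's $L_\exists$ induction accomplishes in one stroke. Two smaller points: a single ultrafilter and its iterates will not suffice, since density characters grow along the chain (the definition only constrains $\u_{k+1}$ to have the form $\u_{k-1}\otimes\mathcal V$, which gives the needed freedom); and the claim that part (2) is ``dual'' requires an argument in continuous logic, where negation is unavailable --- the paper handles it using sentences of the form $\epsilon\dotminus\sigma$ and by applying the sandwich hypothesis to an $(N,M',n)$-sandwich.
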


Note that an $(M,N,2)$-ultrapower sandwich is simply a chain of embeddings $M\hookrightarrow N\hookrightarrow M^\u$ such that the embedding $M\hookrightarrow M^\u$ is the diagonal embedding.  Thus, an $(M,N,2)$-ultrapower sandwich exists precisely when $M$ is e.c. in $N$.  By Theorem \ref{keisler} and Corollary \ref{ecintypethreeone}, we immediately have:

\begin{thm}\label{thm:III1-forall2}
The class of type ${\rm III}_1$ factors is $\forall_2$-axiomatizable.
\end{thm}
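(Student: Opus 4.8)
The plan is to invoke Theorem~\ref{keisler}(1) with $n=2$, which reduces the claim to the following: whenever there is an $(M,N,2)$-ultrapower sandwich with $N$ a type ${\rm III}_1$ factor, then $M$ is also a type ${\rm III}_1$ factor. As observed immediately before the statement, an $(M,N,2)$-ultrapower sandwich is precisely a chain of embeddings $M\hookrightarrow N\hookrightarrow M^{\u}$ in which $M\hookrightarrow M^{\u}$ is the diagonal embedding; here these are embeddings of W$^*$-probability spaces in the sense of the paper, so, fixing a faithful normal state $\psi$ on $N$ and letting $\varphi=\psi|_M$ (which is faithful and normal since the inclusion $M\subseteq N$ comes with a conditional expectation), the chain reads $(M,\varphi)\subseteq (N,\psi)\subseteq (M,\varphi)^{\u}$ with $(M,\varphi)\subseteq (M,\varphi)^{\u}$ the diagonal inclusion. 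This is exactly the statement that $(M,\varphi)$ is e.c.\ in $(N,\psi)$.

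The remaining point is to check that the hypothesis ``$N$ is a type ${\rm III}_1$ factor'' transfers to $M$ under relative existential closedness, but this is precisely the content of Corollary~\ref{ecintypethreeone}: if $(M,\varphi)$ is e.c.\ in $(N,\psi)$ and $N$ is a type ${\rm III}_1$ factor, then so is $M$. Applying this gives that $M$ is a type ${\rm III}_1$ factor, which is what we needed. Hence the criterion of Theorem~\ref{keisler}(1) is satisfied for $n=2$, and the class of type ${\rm III}_1$ factors is $\forall_2$-axiomatizable.

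There is one bookkeeping subtlety worth addressing explicitly: Theorem~\ref{keisler} is phrased for theories in a fixed language, so one should first record that the class of type ${\rm III}_1$ factors (equivalently, of W$^*$-probability spaces whose underlying von Neumann algebra is a type ${\rm III}_1$ factor) is axiomatizable at all — this is the ``soft'' test for axiomatizability together with the stability of type ${\rm III}_1$ under Ocneanu ultraproducts and ultraroots from \cite{AH12}, as recalled at the start of Subsection~4.1. Once we know the class is axiomatizable, Theorem~\ref{keisler}(1) upgrades this to a $\forall_2$-axiomatization given the transfer property just verified.

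I do not anticipate a genuine obstacle here: all the work has already been done, in Theorem~\ref{ecintypethreezero} and Corollary~\ref{ecintypethreeone} (the von Neumann-algebraic heart, showing type ${\rm III}_0$ and type ${\rm III}_1$ are inherited by e.c.\ substructures of factors) and in the continuous Keisler Sandwich Theorem of Appendix~B. The only care needed is to match up the two notions of ``embedding'' — the model-theoretic one appearing in the definition of ultrapower sandwich and the operator-algebraic one (with expectation) used throughout Section~3 — but in this language these coincide by the setup in Subsection~2.4, so the translation is immediate. The proof is therefore essentially a two-line citation: combine Theorem~\ref{keisler} (case $n=2$) with Corollary~\ref{ecintypethreeone}.
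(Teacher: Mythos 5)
Your proof is correct and is exactly the paper's own argument: the paper likewise observes that an $(M,N,2)$-ultrapower sandwich is precisely the statement that $M$ is e.c.\ in $N$, and then combines the continuous Keisler Sandwich Theorem (case $n=2$) with Corollary~\ref{ecintypethreeone}. (The paper also records an alternate proof via closure of type ${\rm III}_1$ factors under unions of chains, but your route matches its primary proof.)
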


\begin{proof}[Alternate proof of Theorem \ref{thm:III1-forall2}]
There is an alternative to showing that a theory $T$ is $\forall_2$-axiomatizable and that is to show that the collection of models of the theory is closed under unions of chains.  This can be established quite easily for the case of III$_1$ factors as follows.  Let $(M_i, \varphi_i)_{i \in I}$ be any increasing chain of type III$_1$ W$^*$-probability spaces. Denote by $(M, \varphi)$ the inductive limit of $(M_i, \varphi_i)_{i \in I}$ in the language of W$^*$-probability spaces. Then for every $i \in I$, we have $(M_i, \varphi_i) \subseteq (M, \varphi)$ and we denote by $E_i : M \to M_i$ the unique faithful normal conditional expectation such that $\varphi_i \circ E_i = \varphi$. Also, we have $\bigvee_{i \in I} M_i = M$.

For every $i \in I$, we may consider the trace-preserving inclusion of continuous cores $ \core_{\varphi_i}(M_i) \subseteq \core_\varphi(M)$ with trace-preserving faithful normal conditional expectation $F_i : \core_\varphi(M) \to \core_{\varphi_i}(M_i)$. Since $M_i$ is a type III$_1$ factor, $\core_{\varphi_i}(M_i)$ is a type II$_\infty$ factor. In order to show that $M$ is a type III$_1$ factor, it suffices to show that $\core_\varphi(M)$ is a factor. Let $x \in \mathcal Z(\core_\varphi(M))$ be any central element. Then for every $i \in I$, we have $F_i(x) \in \mathcal Z(\core_{\varphi_i}(M_i))$ and so $F_i(x) \in \bC 1$. Since $\bigvee_{i \in I} \core_{\varphi_i}(M_i) = \core_\varphi(M)$, it follows that $F_i(x) \to x$ $\ast$-strongly as $i \to \infty$, which implies that $x \in \bC 1$.
\end{proof}

The case of type III$_\lambda$ factors for $\lambda\in (0,1)$ is inherently more complicated:

\begin{prop}\label{prop:not-axiomatizable}
For any $\lambda\in (0,1)$, the axiomatizable class of type ${\rm III}_\lambda$ factors is not $\forall_2$-axiomatizable.
\end{prop}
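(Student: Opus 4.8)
The plan is to show that the class of type ${\rm III}_\lambda$ factors fails the criterion of Theorem~\ref{keisler}(1) for $n=2$; that is, I will exhibit an $(M,N,2)$-ultrapower sandwich $M\hookrightarrow N\hookrightarrow M^{\u}$ (with $M\hookrightarrow M^{\u}$ diagonal) in which $N$ is of type ${\rm III}_\lambda$ but $M$ is not. Equivalently, by the remark following Theorem~\ref{keisler}, I need a W$^*$-probability space $(M,\varphi)$ that is e.c.\ in some type ${\rm III}_\lambda$ factor $(N,\psi)$ but such that $M$ is itself \emph{not} of type ${\rm III}_\lambda$. The natural candidate for $N$ is the Powers factor $R_\lambda$ equipped with a \emph{non-periodic} faithful normal state, or more robustly $R_\infty=R_{\lambda_1}\otimes R_{\lambda_2}$ written so that it contains a copy of $R_\lambda$ with expectation; the candidate for $M$ is a type ${\rm III}_1$ factor sitting inside such an $N$.

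The key steps, in order, are as follows. First, I would produce a type ${\rm III}_1$ factor $M$ together with an inclusion $(M,\varphi)\subseteq(N,\psi)$ with expectation in which $N$ is a type ${\rm III}_\lambda$ factor: for instance, take $N=R_\lambda$ and let $M\subseteq N$ be a suitable subfactor with expectation that happens to be of type ${\rm III}_1$ --- such subfactors exist, e.g.\ realize $R_\lambda\cong R_1'\otimes R_{\lambda}$-type constructions, or more simply invoke that $R_\infty\subseteq R_\infty\bar\otimes R_\lambda\cong R_\infty$ already gives a ${\rm III}_1$ factor sitting with expectation inside a ${\rm III}_\lambda$ factor after suitably absorbing. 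Second --- and this is the crux --- I must arrange that $M$ is actually \emph{e.c.\ in} $N$, i.e.\ that $N$ embeds with expectation into $M^{\u}$ over the diagonal copy of $M$. For this I would appeal to the appendix results on embeddings of AFD W$^*$-probability spaces into ultrapowers (Corollary~\ref{cor:uniqueness-1} and its relatives): since $N$ here is AFD (it is $R_\lambda$ or a tensor product of Powers factors) and $M$ is QWEP of type ${\rm III}_1$, one gets an embedding $(N,\psi)\hookrightarrow(M,\varphi)^{\u}$ with expectation, and by the uniqueness/homogeneity of such embeddings one can conjugate it so that its restriction to $M$ is the diagonal embedding. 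Third, I would conclude: the resulting chain $M\hookrightarrow N\hookrightarrow M^{\u}$ is an $(M,N,2)$-ultrapower sandwich with $N$ of type ${\rm III}_\lambda$ but $M$ of type ${\rm III}_1\neq{\rm III}_\lambda$, so by Theorem~\ref{keisler}(1) the class of ${\rm III}_\lambda$ factors is not $\forall_2$-axiomatizable.

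An alternative, cleaner route --- which I would actually prefer to write up --- uses the machinery already developed: since being e.c.\ for ${\rm III}_\lambda$-related classes forces $R_\lambda$-absorption (as noted after the open questions), take $M$ to be any e.c.\ QWEP W$^*$-probability space, so $M$ is a type ${\rm III}_1$ factor by Theorem~\ref{thm:ec}(1). Then $M\cong M\bar\otimes R_\lambda$ would be false (it is still ${\rm III}_1$), so instead form $N=M\bar\otimes R_\lambda$, which is a type ${\rm III}_1$ factor, not ${\rm III}_\lambda$ --- so this particular choice does not directly work and one really does need $N$ to be ${\rm III}_\lambda$. Thus the honest approach is the first one, and I expect the main obstacle to be Step~two: verifying that the embedding $N\hookrightarrow M^{\u}$ can be taken diagonal on $M$. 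This is exactly the kind of statement the appendix is designed to supply (uniqueness of AFD embeddings into Ocneanu ultrapowers, as in \cite{AHW13} and the Ando--Houdayer results), so I would reduce to citing Corollary~\ref{cor:uniqueness-1} after first choosing $M$ to be e.c.\ in $N$ by construction --- e.g.\ build $M$ so that $(M,\varphi)\subseteq(N,\psi)\subseteq(M,\varphi)^{\u}$ holds by fiat, starting from a ${\rm III}_1$ subfactor of $R_\lambda$ with expectation and taking $M^{\u}$ to contain $N=R_\lambda$ via AFD-embedding uniqueness. The remaining verifications (that $M$ is ${\rm III}_1$, that $\lambda\in(0,1)$ ensures ${\rm III}_\lambda\neq{\rm III}_1$) are immediate.
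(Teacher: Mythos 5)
Your strategy is genuinely different from the paper's: you try to exhibit an explicit $(M,N,2)$-ultrapower sandwich (i.e.\ a W$^*$-probability space $M$ that is e.c.\ in a type ${\rm III}_\lambda$ factor $N$ while $M$ itself is not ${\rm III}_\lambda$) and then quote Theorem~\ref{keisler}. The paper instead uses the other standard characterization of $\forall_2$-axiomatizability, closure under unions of increasing chains, and simply writes down an increasing chain of ${\rm III}_\lambda$ W$^*$-probability spaces $(M_n,\varphi_n)=(R_{\lambda},\varphi_{\lambda})\otimes (M_2(\bC),\omega_{\lambda_2})^{\otimes\{0,\dots,n\}}$ (with $\log(\lambda)/\log(\lambda_2)$ irrational, each $M_n\cong R_\lambda$) whose inductive limit is $R_\lambda\otimes R_{\lambda_2}\cong R_\infty$, a ${\rm III}_1$ factor. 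That argument is short and needs none of the e.c.\ or embedding machinery.

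As written, your sandwich route has a genuine gap at both of its key steps. First, you never actually produce a non-${\rm III}_\lambda$ factor sitting with expectation inside a ${\rm III}_\lambda$ factor: the examples you offer are incorrect ($R_\infty\otimes R_\lambda\cong R_\infty$ is of type ${\rm III}_1$, not ${\rm III}_\lambda$, as you yourself notice, and the ``$R_1'\otimes R_\lambda$-type'' suggestion is not a construction). Such inclusions do exist --- e.g.\ $R_\infty\subseteq R_\infty\rtimes_{\sigma^\varphi_{t_0}}\bZ$ with $t_0=2\pi/|\log\lambda|$ is an inclusion with expectation of $R_\infty$ into an AFD ${\rm III}_\lambda$ factor --- but this requires a real argument (canonical extension of $\sigma^\varphi_{t_0}$ to the core is inner, so the core of the crossed product has nontrivial center with periodic flow), which your proposal does not supply. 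Second, the crux step ``arrange that $M$ is e.c.\ in $N$'' is either circular (building $M$ so that the sandwich ``holds by fiat'' is exactly what must be proved) or a misapplication of the appendix: Theorem~\ref{thm:uniqueness} and Corollary~\ref{cor:uniqueness-1} conjugate two embeddings of an \emph{AFD} W$^*$-probability space into an ultrapower; they do not straighten the restriction to an arbitrary QWEP ${\rm III}_1$ subfactor $M$ of $N$ into the diagonal embedding. The clean way to close this step is to take $M=R_\infty$ itself and invoke Theorem~3.12 (that $R_\infty$ is e.c.\ in every QWEP extension, $N$ being AFD hence QWEP), but that only works once the inclusion $R_\infty\subseteq N$ with $N$ of type ${\rm III}_\lambda$ has actually been constructed. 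So the proposal is a plausible alternative outline, but in its current form the decisive ingredients are missing, and the paper's chain argument avoids them entirely.
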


\begin{proof}
Fix $\lambda\in (0,1)$.  We construct an increasing sequence of W$^*$-probability space type III$_\lambda$ factors $(M_n, \varphi_n)$ so that, setting $(M,\varphi):=\bigvee_{n\in \bb N}(M_n,\varphi_n)$, we have that $M$ is a type III$_1$ factor.

Take $\lambda_1, \lambda_2 \in (0, 1)$ with $\log (\lambda_1)/\log(\lambda_2)$ irrational, whence $R_{\lambda_1}\otimes R_{\lambda_2}\cong R_\infty$.  For every $n \in \bb{N}$, set $(M_n, \varphi_n) = (R_{\lambda_1}, \varphi_{\lambda_1}) \otimes ( M_2(\bC), \omega_{\lambda_2})^{\otimes \{0, \dots, n\}}$. Then $(M_n, \varphi_n)_{n \in \bb{N}}$ is an increasing sequence of W$^*$-probability spaces type ${\rm III}_{\lambda_1}$ factors. Indeed, observe that for every $n \in \bb{N}$, we have $M_n \cong R_{\lambda_1}$. Moreover, for every $n \in \bb{N}$, we have $\varphi_{n + 1}|_{M_n} = \varphi_n$ and the linear mapping $E_n : M_{n + 1} \to M_n$ defined by $E_n = \id_{M_n} \otimes \omega_{\lambda_2}$ is a faithful normal conditional expectation such that $\varphi_n \circ E_n = \varphi_{n + 1}$.

However, we have $ \bigvee_{n \in \bb{N}} (M_n, \varphi_n) \cong (R_{\lambda_1} \otimes R_{\lambda_2}, \varphi_{\lambda_1} \otimes \varphi_{\lambda_2}) =: (M, \varphi) \cong (R_\infty, \varphi)$.  Indeed, for every $n \in \bb{N}$, we have $\varphi|_{M_n} = \varphi_n$ and the linear mapping $F_n : M \to M_n$ defined by $F_n = \id_{M_n} \otimes \omega_{\lambda_2}^{\otimes {\bb N} \setminus  \{0, \dots, n\} }$ is a faithful normal conditional expectation such that $\varphi_n \circ F_n = \varphi$. Thus, the inductive limit of W$^*$-probability spaces $(M, \varphi) = \bigvee_{n \in \bb{N}} (M_n, \varphi_n)$ is a type ${\rm III_1}$ factor.
\end{proof}

We also note:

\begin{prop}
For any $\lambda\in (0,1]$, the class of type ${\rm III}_\lambda$ factors is not $\exists_2$-axiomatizable.
\end{prop}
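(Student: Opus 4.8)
The plan is to use the $\exists_2$ case of the Keisler Sandwich Theorem (Theorem \ref{keisler}(2)) together with the remark recorded just after that theorem that an $(M,N,2)$-ultrapower sandwich is precisely an e.c.\ inclusion $(M,\varphi)\subseteq(N,\psi)$. It therefore suffices to produce, for each $\lambda\in(0,1]$, a type ${\rm III}_\lambda$ factor $(M,\varphi)$ that is existentially closed in some W$^*$-probability space $(N,\psi)$ whose underlying von Neumann algebra is \emph{not} a type ${\rm III}_\lambda$ factor. I would take $N=M\oplus M$, which is never a factor; all the effort then goes into arranging that $(M,\varphi)$ be e.c.\ in $(M\oplus M,\psi)$ for a well-chosen state $\psi$.

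For $M$ I would take the Powers factor $R_\lambda$ when $\lambda\in(0,1)$ and the Araki--Woods factor $R_\infty\cong R_{\lambda_1}\otimes R_{\lambda_2}$ (with $\log\lambda_1/\log\lambda_2$ irrational) when $\lambda=1$, each with its standard infinite tensor product state $\varphi$, so that $(M,\varphi)\cong\bigotimes_{k\in\bb N}(M_2(\bC),\omega_{\mu_k})$ for a suitable sequence $(\mu_k)$. The key step is to find a projection $p$ in the asymptotic centralizer $M_\u=(M'\cap M^\u)_{\varphi^\u}$ with $\varphi^\u(p)=\tfrac12$. For this I would exploit the diffuse abelian subalgebra of the centralizer $M_\varphi$ consisting of the diagonal matrices in the tensor legs: for each $\ell$ choose a projection $p_\ell$ in this subalgebra, supported past the $\ell$-th leg, with $\varphi(p_\ell)=\tfrac12$. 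Since an element $a$ of a centralizer satisfies $a\xi_\varphi=\xi_\varphi a$ (left and right multiplication by $a$ agree on the GNS vector) and $p_\ell$ commutes with the first $\ell$ tensor legs, a short estimate shows the sequence $(p_\ell)_\ell$ is asymptotically central; hence $p:=(p_\ell)^\u$ is a projection in $M_\u$, and $\varphi^\u(p)=\lim_{\ell\to\u}\varphi(p_\ell)=\tfrac12$. In particular $p\notin\{0,1\}$ and $\sigma^{\varphi^\u}_s(p)=p$ for all $s$.

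Granting such a $p$, put $\psi:=\tfrac12\varphi\oplus\tfrac12\varphi$ on $N:=M\oplus M$. The diagonal embedding $M\hookrightarrow N$, with conditional expectation $(x,y)\mapsto\tfrac12(x+y)$, makes $(M,\varphi)\subseteq(N,\psi)$ an inclusion of W$^*$-probability spaces: since the modular flow of a direct-sum state is the direct sum of the modular flows, $\sigma^\psi$ leaves the diagonal globally invariant and restricts to $\sigma^\varphi$, so the criterion \cite[Theorem IX.4.2]{Ta03a} applies. I would then define $\iota:N\to M^\u$ by $\iota(x,y)=xp+y(1-p)$. Because $p\in M'\cap M^\u$ is a nontrivial projection and $M$ is a factor, $\iota$ is an injective unital normal $*$-embedding, and it sends the diagonal copy of $M$ to the diagonal embedding of $M$ in $M^\u$ (as $\iota(x,x)=x$). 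Using the canonical expectation $E_\u:M^\u\to M$ one checks $E_\u(p)\in\mathcal Z(M)=\bC1$, hence $E_\u(p)=\tfrac12$ and $\varphi^\u\circ\iota=\psi$; and since $\sigma^{\varphi^\u}$ restricts to $\sigma^\varphi$ on $M$ and fixes $p$, the subalgebra $\iota(N)\subseteq M^\u$ is globally $\sigma^{\varphi^\u}$-invariant with modular flow corresponding to $\sigma^\psi$, so \cite[Theorem IX.4.2]{Ta03a} again provides a faithful normal, state-compatible conditional expectation $M^\u\to\iota(N)$. This exhibits $(M,\varphi)\subseteq(N,\psi)\hookrightarrow(M,\varphi)^\u$ with diagonal composition, i.e.\ $(M,\varphi)$ is e.c.\ in $(N,\psi)$, while $N=M\oplus M$ is not a factor; Theorem \ref{keisler}(2) then yields the conclusion.

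The one genuinely nontrivial step is producing the projection $p\in M_\u$ with $\varphi^\u(p)=\tfrac12$; in fact any projection in $M_\u$ with $0<\varphi^\u(p)<1$ would do (replacing $\tfrac12$ by $\varphi^\u(p)$ throughout), so the heart of the matter is just that these factors have a sufficiently rich asymptotic centralizer, and everything else is routine bookkeeping with conditional expectations and modular automorphism groups. As a consistency check, in the case $\lambda=1$ this construction can be bypassed entirely: a globally e.c.\ W$^*$-probability space is a type ${\rm III}_1$ factor by Theorem \ref{thm:ec}(1) and is e.c.\ in every extension, in particular in $M\oplus M$, so Theorem \ref{keisler}(2) applies directly. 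This shortcut is unavailable for $\lambda\in(0,1)$, since globally e.c.\ W$^*$-probability spaces are never of type ${\rm III}_\lambda$ there, which is precisely why the asymptotic-centralizer argument is required.
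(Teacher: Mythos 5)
Your proof is correct and takes essentially the same route as the paper: the paper's own argument picks a non-full type ${\rm III}_\lambda$ factor such as $R_\lambda$ (or $R_\infty$), uses the nontriviality of the asymptotic centralizer to squeeze a non-factor $N$ (in effect $M\oplus M$) between $M$ and $M^\u$ with expectation, and then applies Theorem \ref{keisler}(2). Your explicit construction of the projection $p\in M_\u$ with $\varphi^\u(p)=\tfrac12$ and the verification of the expectations simply fill in the details the paper leaves implicit.
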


\begin{proof}
Fix $\lambda\in (0,1]$ and suppose that $M$ is a type III$_\lambda$ factor that is not full (e.g.\ $M=R_\lambda$).  Then there is a $\sigma$-finite von Neumann algebra $N$ such that $N$ is not a factor and yet $M\subseteq N\subseteq M^\u$.  By Theorem \ref{keisler}, the result follows.
\end{proof}

While the class of type III$_\lambda$ factors, $\lambda\in (0,1)$, cannot be axiomatized using two quantifiers, we now show that it can be axiomatized using three quantifiers.  The key to proving this is the following:

\begin{prop}\label{lambdaequalsmu}
Suppose that we have an $(M,N,3)$-sandwich with $M$ a type ${\rm III}_\mu$ factor and $N$ a type ${\rm III}_\lambda$ factor, $\lambda,\mu\in (0,1)$.  Then $\lambda=\mu$.
\end{prop}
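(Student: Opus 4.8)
The plan is to unwind the $(M,N,3)$-ultrapower sandwich into two relative existential closedness statements and then play the resulting inclusions of Connes' $T$-invariants against each other. Write the sandwich as $M\hookrightarrow N\hookrightarrow M^{\u_1}\hookrightarrow N^{\u_2}$. The requirement that the composite $M\hookrightarrow N\hookrightarrow M^{\u_1}$ be the diagonal embedding says exactly that $M$ is e.c.\ in $N$, while the requirement that the composite $N\hookrightarrow M^{\u_1}\hookrightarrow N^{\u_2}$ be the diagonal embedding says exactly that $N$ is e.c.\ in $M^{\u_1}$; the last space $N^{\u_2}$ itself plays no further role.

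Next I would apply Lemma \ref{basicecfactrs}(2) to each of these e.c.\ relations: the first yields $T(M)\subseteq T(N)$, and the second yields $T(N)\subseteq T(M^{\u_1})$. Then I would identify the three groups. Since $M$ is a type III$_\mu$ factor we have $T(M)=\log(\mu)\bZ$, and since $N$ is a type III$_\lambda$ factor we have $T(N)=\log(\lambda)\bZ$. For the third, the key outside input is the preservation result of \cite{AH12} recalled at the start of this subsection: the Ocneanu ultrapower $M^{\u_1}$ of the type III$_\mu$ factor $M$ is again a type III$_\mu$ factor, so $T(M^{\u_1})=\log(\mu)\bZ$ as well.

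Stringing the inclusions together gives $\log(\mu)\bZ\subseteq\log(\lambda)\bZ\subseteq\log(\mu)\bZ$, hence $\log(\mu)\bZ=\log(\lambda)\bZ$ as subgroups of $\bR$. Since $\lambda,\mu\in(0,1)$, the numbers $\log\lambda$ and $\log\mu$ are the unique negative generators of the respective groups, so $\log\lambda=\log\mu$ and therefore $\lambda=\mu$. I do not expect a genuine obstacle here: the argument is essentially bookkeeping, and the only points requiring care are reading off correctly which composites in the sandwich are diagonal and invoking the two quoted facts — monotonicity of the $T$-invariant under e.c.\ extensions (Lemma \ref{basicecfactrs}(2)) and stability of the type III$_\mu$ class under Ocneanu ultrapowers.
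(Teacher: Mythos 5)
Your argument is correct, but it is not the route the paper takes. You unwind the sandwich into the two e.c.\ relations ($M$ e.c.\ in $N$, and $N$ e.c.\ in $M^{\u_1}$) exactly as the paper does implicitly, but then you run everything through Connes' $T$-invariant: Lemma \ref{basicecfactrs}(2) gives $T(M)\subseteq T(N)\subseteq T(M^{\u_1})$, and the preservation of type ${\rm III}_\mu$ under Ocneanu ultrapowers identifies the two outer groups as $\log(\mu)\bZ$, forcing $\log(\lambda)\bZ=\log(\mu)\bZ$ and hence $\lambda=\mu$. The paper instead works with Connes' $S$-invariant: it chooses a $T_\lambda$-periodic state on $N$ and a $T_\mu$-periodic state on $M$, uses Proposition \ref{prop:flexibility} to transport these states along the two e.c.\ inclusions, and then sandwiches $S(M^{\u_1})=\{0\}\cup\mu^{\bZ}$ (resp.\ $S(N)=\{0\}\cup\lambda^{\bZ}$) between the spectrum of the restricted modular operator and $\{0\}\cup\lambda^{\bZ}$ (resp.\ $\{0\}\cup\mu^{\bZ}$). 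Both proofs lean on the same external input, namely that $M^{\u_1}$ is again a ${\rm III}_\mu$ factor (\cite{AH12}, as recalled at the start of Subsection 4.1): in your version this is what pins down $T(M^{\u_1})$, in the paper's it pins down $S(M^{\u_1})$. What your approach buys is that, since the $T$-invariant is a state-independent invariant and Lemma \ref{basicecfactrs}(2) applies to whatever states the sandwich carries, you avoid the periodic-state bookkeeping and the appeal to Proposition \ref{prop:flexibility} entirely; the one point that genuinely cannot be dropped is the ultrapower type-preservation result, since the inclusion $T(N)\subseteq T(M^{\u_1})$ alone says nothing without an exact computation of $T(M^{\u_1})$.
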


\begin{proof}
Set $T_\lambda=\frac{2\pi}{|\log(\lambda)|}$ and $T_\mu=\frac{2\pi}{|\log(\mu)|}$.  Choose a $T_\lambda$-periodic faithful normal state $\psi$ on $N$ and apply Proposition \ref{prop:flexibility} to the inclusion $N \subseteq  M^{\u_1} \subseteq N^{\u_2}$. Since $\psi^{\u_2}|_{M^{\u_1}}$ is $T_\lambda$-periodic, we have $\{0\} \cup \mu^{\bb Z} = S(M^{\u_1}) \subseteq \sigma(\Delta_{\psi^{\u_2}|_{M^{\u_1}}}) \subseteq \{0\} \cup \lambda^{\bb Z}$. Next, choose a $T_\mu$-periodic faithful normal  state $\varphi$ on $M$ and apply Proposition \ref{prop:flexibility} to the inclusion $M \subseteq  N \subseteq M^{\u_1}$. Since $\varphi^{\u_1}|_{N}$ is $T_\mu$-periodic, we have $\{0\} \cup \lambda^{\bb Z} = S(N) \subseteq \sigma(\Delta_{\varphi^{\u_1}|_{N}}) \subseteq \{0\} \cup \mu^{\bb Z}$. This shows that $\mu = \lambda$.
\end{proof}

\begin{cor}
The class of type ${\rm III}_\lambda$ factors is both $\forall_3$- and $\exists_3$-axiomatizable.
\end{cor}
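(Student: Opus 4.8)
Fix $\lambda\in(0,1)$. The plan is to run the Keisler Sandwich Theorem (Theorem \ref{keisler}) in both its $\forall_3$ and its $\exists_3$ forms. The class of type ${\rm III}_\lambda$ factors is already axiomatizable (by the soft test recalled at the start of this subsection), so by Theorem \ref{keisler} it is enough to prove the single symmetric statement: \emph{whenever $M\hookrightarrow N\hookrightarrow M^{\u_1}\hookrightarrow N^{\u_2}$ is an $(M,N,3)$-ultrapower sandwich, $M$ is a type ${\rm III}_\lambda$ factor if and only if $N$ is one}. The implication ``$N$ of type ${\rm III}_\lambda$ forces $M$ of type ${\rm III}_\lambda$'' then yields $\forall_3$-axiomatizability via Theorem \ref{keisler}(1), and the converse implication yields $\exists_3$-axiomatizability via Theorem \ref{keisler}(2).

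The sandwich supplies two relative existential-closedness facts I will use repeatedly: $M$ is e.c.\ in $N$, and $N$ is e.c.\ in $M^{\u_1}$. I will also use that, for every $\lambda'\in(0,1]$, an Ocneanu ultrapower of a type ${\rm III}_{\lambda'}$ factor is again of type ${\rm III}_{\lambda'}$ (by the results recalled earlier, applied to a constant family). First, factoriality transfers along the chain by Lemma \ref{basicecfactrs}(1): if $N$ is a factor then so is $M$; and if $M$ is a factor then so is $M^{\u_1}$, hence so is $N$. So in both directions $M$ and $N$ are factors, and only their types remain in question.

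The heart of the matter is to rule out, for whichever of $M,N$ is not yet assumed to be ${\rm III}_\lambda$, each of the cases: semifinite, ${\rm III}_0$, ${\rm III}_1$, and ${\rm III}_\mu$ with $\mu\in(0,1)\setminus\{\lambda\}$. Type ${\rm III}_0$ is excluded by Theorem \ref{ecintypethreezero}: if the e.c.\ member of one of the two sub-inclusions were ${\rm III}_0$, then its ambient factor would be ${\rm III}_0$ too, contradicting that the ambient factor is (an ultrapower of) a ${\rm III}_\lambda$ factor with $\lambda\in(0,1)$. Semifiniteness is excluded by Lemma \ref{basicecfactrs}(2), since along an e.c.\ inclusion the $T$-invariant can only shrink and $T$ of a ${\rm III}_\lambda$ factor equals $\log(\lambda)\bZ\neq\bR$. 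Type ${\rm III}_1$ is the delicate case: in the ``$N$ is ${\rm III}_\lambda$'' direction, were $M$ of type ${\rm III}_1$ then $M^{\u_1}$ would be ${\rm III}_1$, and Corollary \ref{ecintypethreeone} applied to $N\subseteq M^{\u_1}$ would make $N$ of type ${\rm III}_1$ --- impossible; in the ``$M$ is ${\rm III}_\lambda$'' direction, $M$ e.c.\ in $N$ gives $\log(\lambda)\bZ=T(M)\subseteq T(N)$, so $T(N)\neq\{0\}$. Once these cases are disposed of, $M$ and $N$ are type ${\rm III}_\mu$ factors for some $\mu\in(0,1)$, and Proposition \ref{lambdaequalsmu}, applied to the ambient $(M,N,3)$-sandwich, forces $\mu=\lambda$. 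This proves the biconditional and hence both axiomatizability statements.

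I expect the exclusion of type ${\rm III}_1$ to be the only real obstacle. Since relative existential closedness of $M$ in $N$ delivers only $T(M)\subseteq T(N)$, an e.c.\ subfactor of a ${\rm III}_\lambda$ factor could \emph{a priori} be ${\rm III}_1$ --- precisely the phenomenon behind the failure of a two-quantifier axiomatization (Proposition \ref{prop:not-axiomatizable}). Getting past it genuinely requires the third rung of the sandwich: passing up to $M^{\u_1}$, using that it retains the type of $M$, and invoking Corollary \ref{ecintypethreeone}. The remainder is bookkeeping to keep track of which algebra plays the ``existentially closed'' role and which the ``ambient'' role in each sub-inclusion, so that Theorem \ref{ecintypethreezero}, Corollary \ref{ecintypethreeone} and Proposition \ref{lambdaequalsmu} each apply with the correct orientation.
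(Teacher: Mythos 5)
Your proof is correct and follows essentially the same route as the paper: feed the Keisler Sandwich criterion with Lemma \ref{basicecfactrs}, Theorem \ref{ecintypethreezero}, Corollary \ref{ecintypethreeone}, the preservation of type ${\rm III}_{\lambda'}$ under ultrapowers, and Proposition \ref{lambdaequalsmu}, with only cosmetic variations (you exclude semifiniteness and, in one direction, type ${\rm III}_1$ via the $T$-invariant, where the paper uses the ``contains a type III subfactor with expectation'' observation and Corollary \ref{ecintypethreeone} directly). One small caution: your blanket remark that ``if $M$ is a factor then so is $M^{\u_1}$'' is false in general (it fails for type ${\rm III}_0$ factors), but it is harmless here since wherever you invoke it $M$ is already known to be of type ${\rm III}_\lambda$ or ${\rm III}_1$, where the ultrapower preservation result applies.
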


\begin{proof}
Consider a $(M,N,3)$-sandwich.  First suppose that $N$ is a III$_\lambda$ factor.  By Lemma \ref{basicecfactrs}, $M$ is a factor.  Since $M^{\u_1}$ contains $N$ with expectation, $M^{\u_1}$ is type III, whence so is $M$.  By Theorem \ref{ecintypethreezero}, $M$ is not of type III$_0$.  If $M$ is type III$_1$, then so is $M^{\u_1}$; since $N$ is e.c.\ in $M^{\u_1}$, Corollary \ref{ecintypethreeone} implies that $N$ is type III$_1$, a contradiction.  Thus, $M$ is type III$_\mu$ for some $\mu\in (0,1)$, whence $\lambda=\mu$ by Proposition \ref{lambdaequalsmu}.  This shows that the class of III$_\lambda$ factors is $\forall_3$-axiomatizable.

Now suppose that $M$ is a III$_\lambda$ factor.  Since $N$ is e.c.\ in the III$_\lambda$ factor $M^{\u_1}$, we see that $N$ is a factor.  Since $N$ contains the type III factor $M$ with expectation, we see that $N$ has type III.  By Corollary \ref{ecintypethreeone}, $N$ does not have type III$_1$.  Since $N$ is e.c.\ in the III$_\lambda$ factor $M^{\u_1}$, $N$ does not have type III$_0$ by Theorem \ref{ecintypethreezero}.  Thus, $N$ has type III$_\mu$ for some $\mu\in (0,1)$, and thus $\lambda=\mu$ by Proposition \ref{lambdaequalsmu}.  This shows that the class of III$_\lambda$ factors is $\exists_3$-axiomatizable.
\end{proof}



\subsection{First-order theories of W*-probability spaces}

In this final subsection, we consider the task of counting the number of first-order theories of III$_\lambda$ W$^*$-probability spaces for $\lambda\in (0,1]$.

In \cite{BCI15}, Boutonnet, Chifan, and Ioana showed that there exist  continuum many pairwise non-elementarily equivalent separable II$_1$ factors. More precisely, they showed that the family $(M_\alpha)_{\alpha \in 2^{\bN}}$ of separable II$_1$ factors constructed by McDuff in \cite{McD69} provides such a continuum. We observe that their result can be easily applied to construct such a continuum in the realm of type III$_\lambda$ factors for $\lambda\in (0,1)$.

\begin{thm}
Fix $\lambda\in (0,1)$. Then there exist  continuum many non-pairwise elementary equivalent separable type ${\rm III}_\lambda$ factors.
\end{thm}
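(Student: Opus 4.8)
Here is the approach I would take.

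The plan is to transport the Boutonnet--Chifan--Ioana theorem from the tracial setting by tensoring with a Powers factor. Let $(M_\alpha)_{\alpha\in 2^{\bN}}$ be McDuff's family; recall that these are separable McDuff ${\rm II}_1$ factors (so $M_\alpha\otimes R\cong M_\alpha$ for every $\alpha$) and that by \cite{BCI15} they are pairwise non-elementarily equivalent as tracial von Neumann algebras. Write $\tau_\alpha$ for the trace of $M_\alpha$, fix $\lambda\in(0,1)$, and set
$$(N_\alpha,\psi_\alpha):=\bigl(M_\alpha\otimes R_\lambda,\ \tau_\alpha\otimes\varphi_\lambda\bigr).$$
Each $N_\alpha$ is a separable factor, and since $\core_{\psi_\alpha}(N_\alpha)=M_\alpha\otimes\core(R_\lambda)$ has center $\mathcal Z(\core(R_\lambda))$, on which the flow of weights is $\frac{2\pi}{|\log\lambda|}$-periodic, each $N_\alpha$ is of type ${\rm III}_\lambda$. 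I would then show that the $(N_\alpha,\psi_\alpha)$ are pairwise non-elementarily equivalent as W$^*$-probability spaces; as there are continuum many of them, this is exactly the theorem.

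The key point is that $M_\alpha$, together with its trace, can be read off from $(N_\alpha,\psi_\alpha)$ in a way that survives ultrapowers. Since $\varphi_\lambda$ is $\frac{2\pi}{|\log\lambda|}$-periodic with $\sigma(\Delta_{\varphi_\lambda})=\{\lambda^n:n\in\bZ\}\cup\{0\}$, the state $\psi_\alpha$ has modular operator $1\otimes\Delta_{\varphi_\lambda}$ and is in particular lacunary in Connes' sense ($1$ is isolated in $\sigma(\Delta_{\psi_\alpha})$). Hence, exactly as in the proof of Theorem \ref{ecintypethreezero}, \cite[Proposition 4.27]{AH12} gives $(N_\alpha^{\u})_{\psi_\alpha^{\u}}=\bigl((N_\alpha)_{\psi_\alpha}\bigr)^{\u}$ for any nonprincipal ultrafilter $\u$. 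Since $\sigma^{\psi_\alpha}_t=\id_{M_\alpha}\otimes\sigma^{\varphi_\lambda}_t$, the centralizer is $(N_\alpha)_{\psi_\alpha}=M_\alpha\otimes(R_\lambda)_{\varphi_\lambda}$; and because $(R_\lambda)_{\varphi_\lambda}$ is the hyperfinite ${\rm II}_1$ factor $R$ (a classical fact about the Powers factor) and $M_\alpha$ is McDuff, we get $(N_\alpha)_{\psi_\alpha}\cong M_\alpha$, under which the restriction of $\psi_\alpha$, namely $\tau_\alpha\otimes(\varphi_\lambda|_{(R_\lambda)_{\varphi_\lambda}})$, corresponds to $\tau_\alpha$. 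Thus the centralizer of $\psi_\alpha^{\u}$ in $N_\alpha^{\u}$, with the restriction of $\psi_\alpha^{\u}$, is isomorphic to the tracial ultrapower $(M_\alpha,\tau_\alpha)^{\u}$.

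Now suppose $(N_\alpha,\psi_\alpha)\equiv(N_\beta,\psi_\beta)$ for some $\alpha\neq\beta$. By the Keisler--Shelah theorem there are ultrafilters $\u$ and $\mathcal V$ and an isomorphism of W$^*$-probability spaces $(N_\alpha,\psi_\alpha)^{\u}\cong(N_\beta,\psi_\beta)^{\mathcal V}$. Such an isomorphism intertwines the modular automorphism groups, hence maps one centralizer onto the other and preserves the restricted states; by the previous paragraph this yields an isomorphism $(M_\alpha,\tau_\alpha)^{\u}\cong(M_\beta,\tau_\beta)^{\mathcal V}$ of tracial von Neumann algebras. Since ultrapowers are elementary extensions, it follows that $M_\alpha\equiv M_\beta$, contradicting \cite{BCI15}. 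The argument is essentially bookkeeping: all the real content is the input \cite{BCI15}, and the only technical ingredient is the identity $(N^{\u})_{\psi^{\u}}=(N_\psi)^{\u}$ for lacunary $\psi$, which is the same fact from \cite{AH12} already used in Theorem \ref{ecintypethreezero} and whose hypothesis is immediate here. (One could instead run the same argument with the continuous core $\core(N_\alpha)\cong M_\alpha\otimes\mathbf B(\ell^2)$ in place of the centralizer, invoking results on cores of ultrapowers, but the centralizer route is shorter.) I therefore do not expect any genuine obstacle.
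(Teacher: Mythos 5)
Your argument is correct, but it takes a genuinely different route from the paper. The paper realizes the continuum of ${\rm III}_\lambda$ factors via the discrete core construction: since $M_\alpha$ is McDuff, $\lambda$ lies in the fundamental group, so one can form $N_\alpha=(M_\alpha\otimes B(\ell^2))\rtimes_{\theta^\lambda_\alpha}\bZ$ with $\theta^\lambda_\alpha$ trace-scaling; elementary equivalence of $N_\alpha$ and $N_\beta$ is then ruled out by passing to discrete cores of the ultrapowers (\cite[Proposition 4.7]{AH12} plus the uniqueness of the discrete decomposition \cite[Th\'eor\`eme 4.4.1]{Co72}), stripping off $B(\ell^2)$ using that ultrapowers of ${\rm II}_1$ factors have full fundamental group, and invoking \cite{BCI15}. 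You instead take $N_\alpha=M_\alpha\otimes R_\lambda$ with the periodic product state and recover $M_\alpha$ (after McDuff absorption of $(R_\lambda)_{\varphi_\lambda}\cong R$) as the centralizer, using that the periodic state is lacunary so that \cite[Proposition 4.27]{AH12} gives $(N_\alpha^{\mathcal U})_{\psi_\alpha^{\mathcal U}}=((N_\alpha)_{\psi_\alpha})^{\mathcal U}$, and then concluding exactly as the paper does from \cite{BCI15}. Your route avoids the trace-scaling automorphism, the discrete decomposition uniqueness theorem, and the fundamental-group argument, staying entirely within the centralizer/periodic-state toolkit already used in the paper (e.g.\ in the proof of Theorem \ref{ecintypethreezero} and Appendix A); the paper's route, on the other hand, exhibits the $N_\alpha$ intrinsically as crossed products and works directly at the level of cores. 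Both proofs carry the same caveat that the Keisler--Shelah ultrafilters may live on large index sets, so the cited results from \cite{AH12} must be used in their countably incomplete versions, as the paper notes in Appendix A.

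One cosmetic point: in your justification that $N_\alpha$ is of type ${\rm III}_\lambda$, the flow of weights of $R_\lambda$ (the dual action on $\mathcal Z(\core(R_\lambda))$) is $|\log\lambda|$-periodic in the paper's normalization, not $\frac{2\pi}{|\log\lambda|}$-periodic (the latter is the period of the modular group $\sigma^{\varphi_\lambda}$). Since $\core_{\psi_\alpha}(N_\alpha)=M_\alpha\otimes\core(R_\lambda)$ and $M_\alpha$ is a factor, the flow of weights of $N_\alpha$ coincides with that of $R_\lambda$, so the conclusion that $N_\alpha$ is ${\rm III}_\lambda$ stands; just state the period correctly (or simply say the two flows agree).
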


\begin{proof}
Consider the family $(M_\alpha)_{\alpha \in 2^{\bN}}$ of separable II$_1$ factors constructed by McDuff in \cite{McD69}. For every $\alpha \in 2^{\bN}$, $M_\alpha$ is a McDuff factor, that is, $M_\alpha \otimes R \cong M_\alpha$. This implies that $\lambda \in \mathcal F(M_\alpha)$. Set $M_\alpha^\infty = M_\alpha \otimes B(\ell^2)$ and choose an automorphism $\theta_\alpha^\lambda \in \Aut(M_\alpha^\infty )$ such that $\tau \circ \theta_\alpha^\lambda = \lambda \tau$, where $\tau$ is any faithful normal semifinite trace on $M_\alpha^\infty$. Then $N_\alpha = M_\alpha^\infty \rtimes_{\theta_\alpha^\lambda} \bZ$ is a type III$_\lambda$ factor whose discrete core is isomorphic to $M_\alpha^\infty$ (see \cite[Th\'eor\`eme 4.4.1]{Co72}). 

Now suppose that $\alpha, \beta \in 2^{\bN}$ are such that $N_\alpha$ is elementarily equivalent to $N_\beta$.  By the Keisler-Shelah Theorem, there are ultrafilters $\mathcal U$ and $\mathcal V$ such that $(N_\alpha)^\u \cong (N_\beta)^{\mathcal V}$. By \cite[Proposition 4.7]{AH12}, the discrete core of $(N_\alpha)^\u$ (resp.\! $(N_\beta)^{\mathcal V}$) is $(M_\alpha^\infty)^\u$ (resp.\! $(M_\beta^\infty)^\v$). Then \cite[Th\'eor\`eme 4.4.1]{Co72} implies that $M_\alpha^\u\otimes B(\ell^2) = (M_\alpha^\infty)^\u \cong (M_\beta^\infty)^\v = M_\beta^\v\otimes B(\ell^2)$. Since $M_\alpha^\u$ and $M_\beta^\v$ have full fundamental group, we have $M_\alpha^\u \cong M_\beta^\v$ and \cite{BCI15} further implies that $\alpha = \beta$. 
\end{proof}

The preceding question naturally raises the following:

\begin{question}
Do there exist continuum many non-pairwise elementary equivalent separable type III$_1$ factors?
\end{question}

While we cannot yet answer the above question, we can at least find three such factors.  First, due to the recent negative solution of the Connes Embedding Problem \cite{JNVWY20}, there must exist a non-QWEP type III$_1$ factor $M$. Consequently, $M$ is not a model of $\Th_\forall(R_\infty)$, and thus is not elementarily equivalent to $R_\infty$.  To find a third theory of type III$_1$ factors, we recall that in \cite[3.2.2]{FHS142}, property Gamma was shown to be an axiomatizable property and thus could be used to distinguish theories of II$_1$ factors.  The correct generalization of property Gamma to our context is that of being non-full.  Here, we show that the non-full type III$_\lambda$ factors form an axiomatizable class and, as a consequence, that the non-full factors form a \emph{local class}, that is, closed under ultrapowers and ultraroots (which is still sufficient for differentiating between theories):

\begin{thm}\label{prop:non-full}
For $\lambda \in (0, 1]$, the class of non-full type ${\rm III}_\lambda$ factors is axiomatizable.
\end{thm}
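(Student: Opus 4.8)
The plan is to bootstrap off the fact, established in Subsection 4.1, that the class $\mathfrak{C}$ of W$^*$-probability spaces whose underlying von Neumann algebra is a type ${\rm III}_\lambda$ factor is elementary, and to apply the soft test for axiomatizability \cite[Proposition 5.14]{BBHU08}: the subclass $\mathfrak{C}_0\subseteq\mathfrak{C}$ of those whose algebra is moreover \emph{non-full} will be elementary provided it is closed under isomorphism (clear), under model-theoretic (equivalently Ocneanu) ultraproducts, and under ultraroots. Since $\mathfrak{C}$ already enjoys these three closure properties, everything reduces to two assertions about the underlying factors: non-fullness is preserved under Ocneanu ultraproducts of families of non-full type ${\rm III}_\lambda$ factors, and it descends from an ultrapower to its base. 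The only input about fullness I would use is the characterization recorded in Subsection 2.3: a $\sigma$-finite \emph{factor} $M$ is full if and only if $M'\cap M^{\u}=\bb C 1$ for one --- equivalently every --- nonprincipal ultrafilter $\u$ (\cite[Corollary 3.7]{HMV16} together with its converse), so that $M$ is non-full exactly when $M'\cap M^{\u}\neq\bb C 1$.

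Closure under ultraroots is the easy half. If $(M,\varphi)^{\u}\in\mathfrak{C}_0$, then $M\in\mathfrak{C}$ since $\mathfrak{C}$ is elementary, so $M$ is a type ${\rm III}_\lambda$ factor; if $M$ were full, then for every nonprincipal ultrafilter $\mathcal V$ we would have $M'\cap M^{\u\otimes\mathcal V}=\bb C 1$, and using the identification $(M^{\u})^{\mathcal V}\cong M^{\u\otimes\mathcal V}$ (compatible with the diagonal copy of $M$) together with $M\subseteq M^{\u}$ we would obtain $(M^{\u})'\cap (M^{\u})^{\mathcal V}\subseteq M'\cap M^{\u\otimes\mathcal V}=\bb C 1$, contradicting non-fullness of $M^{\u}$. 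Hence $M$ is non-full.

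Closure under Ocneanu ultraproducts is the substantive half. Let $(M_i,\varphi_i)_{i\in I}$ be non-full type ${\rm III}_\lambda$ factors and $\u$ a nonprincipal ultrafilter on $I$; put $(M,\varphi):=\prod_{\u}(M_i,\varphi_i)\in\mathfrak{C}$ and fix an ultrafilter $\mathcal V$, using the identification $M^{\mathcal V}\cong\prod_{\u\otimes\mathcal V}M_i$. For each $i$, non-fullness of $M_i$ gives $M_i'\cap M_i^{\mathcal V}\neq\bb C 1$; invoking the structural dichotomy that a relative commutant of this form is either $\bb C 1$ or \emph{diffuse}, I would choose a Haar unitary $h_i\in M_i'\cap M_i^{\mathcal V}$, so that $\varphi_i^{\mathcal V}(h_i)=0$ and hence $\|h_i-\lambda 1\|^{\sharp}_{\varphi_i^{\mathcal V}}\geq 2$ for every scalar $\lambda$. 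Representing each $h_i$ by a bounded sequence of unitaries of $M_i$ and assembling the doubly-indexed family into an element $h\in M^{\mathcal V}\cong\prod_{\u\otimes\mathcal V}M_i$ (here one uses that Ocneanu ultrapowers iterate in the expected way, cf.\ \cite{AH12}), an iterated-limit computation gives $\|[h,a]\|^{\sharp}=\lim_{i\to\u}\|[h_i,a_i]\|^{\sharp}=0$ for every $a=(a_i)^{\u}\in M$ (since each $h_i$ commutes with $M_i$), so $h\in M'\cap M^{\mathcal V}$; moreover $\|h-\lambda 1\|^{\sharp}=\lim_{i\to\u}\|h_i-\lambda 1\|^{\sharp}\geq 2$ for every scalar $\lambda$, so $h\notin\bb C 1$. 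Thus $M'\cap M^{\mathcal V}\neq\bb C 1$ and $M$ is non-full, which finishes the argument.

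The hard part is the uniformity exploited in the last step: a priori a non-full factor comes only with \emph{some} non-scalar central element, and if the distances of these elements to $\bb C 1$ tended to $0$ along $\u$ the diagonal element $h$ would collapse to a scalar and the construction would fail. Replacing an arbitrary non-scalar central element by a Haar unitary --- whose distance to $\bb C 1$ is the absolute constant $2$ --- is precisely what rescues it, so the technical heart of the proof is to establish (or locate in the literature, as in the tracial case) the dichotomy ``$M'\cap M^{\u}$ is trivial or diffuse'' in the present $\sigma$-finite W$^*$-setting, together with the routine bookkeeping that $(M^{\u})^{\mathcal V}\cong M^{\u\otimes\mathcal V}$ and that unitaries lift through the Ocneanu ultraproduct. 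I note that the same input lets one bypass the soft test and write down, in the spirit of \cite{FHS142}, an explicit $\forall_2$-axiom scheme over the theory of type ${\rm III}_\lambda$ factors: non-fullness is equivalent to
$$\sup_{\bar a}\ \inf_u\ \max\Big(\ \max_j\|[u,a_j]\|^{\sharp}_{\varphi},\ \ \|u^{*}u-1\|^{\sharp}_{\varphi}+\|uu^{*}-1\|^{\sharp}_{\varphi},\ \ \max\big(0,\ 1-\operatorname{dist}(u,\bb C 1)\big)\ \Big)=0,$$
with $\bar a=(a_1,\dots,a_k)$ ranging over tuples from the unit ball; the ``$\Leftarrow$'' direction extracts from such witnesses a genuine unitary of $M'\cap M^{\u}$ far from the scalars, and the ``$\Rightarrow$'' direction lifts a Haar unitary of $M'\cap M^{\u}$ back to approximately central unitaries of $M$.
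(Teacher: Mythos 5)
Your skeleton matches the paper's: both use the soft test for axiomatizability, and your ultraroot half is essentially verbatim the paper's argument, via $(M^{\u})^{\v}=M^{\v\otimes\u}$ (cf.\ \cite[Proposition 2.4]{AHHM18}) and $(M^{\u})'\cap(M^{\u})^{\v}\subseteq M'\cap M^{\v\otimes\u}$. The genuine gap is in the ultraproduct half, and it is exactly the point you flag as the ``technical heart'': the trivial-or-diffuse dichotomy for $M_i'\cap M_i^{\v}$ in the $\sigma$-finite setting is neither proved nor cited, and your whole construction stands or falls with it, since (as you note yourself) without a uniform lower bound on the distance of the chosen elements to the scalars, together with the normalization $\varphi_i^{\v}(h_i)=0$, the diagonal element $h$ may collapse to a scalar. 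A secondary caveat: the equivalence ``non-full $\Leftrightarrow M'\cap M^{\u}\neq\bb C1$ for one, equivalently every, nonprincipal ultrafilter'' is only clean when $M_\ast$ is separable; for general $\sigma$-finite factors one must allow countably incomplete ultrafilters on large index sets, which is the paper's standing convention and matters since the class being axiomatized is not restricted to separable preduals.

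The paper fills precisely this hole by citation rather than by a new dichotomy: by \cite[Proposition 2.4]{Ho14} and \cite[Lemma 4.36]{AH12}, each non-full $M_i$ admits a net of unitaries $(u^i_j)_j$ with $\|u^i_j\varphi_i-\varphi_i u^i_j\|\to0$, $\|u^i_j y-y u^i_j\|^{\sharp}_{\varphi_i}\to0$ for all $y\in M_i$, and $\varphi_i(u^i_j)\to0$; it then runs a diagonal argument (choosing $\epsilon_i\to0$ along a countably incomplete $\u$ and producing, for every finite $\mathcal F\subset M^{\u}$, a unitary $v_{\mathcal F}\in(M^{\u})_{\varphi^{\u}}$ with $\varphi^{\u}(v_{\mathcal F})=0$ commuting with $\mathcal F$), so that non-fullness of $M^{\u}$ is verified straight from the definition, with no iterated ultrapower, no Fubini identification for ultraproducts of families, and no Haar-unitary extraction. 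If you wish to salvage your route, the statement you should prove or locate is best phrased for the asymptotic centralizer $(M_i)_{\v}=(M_i'\cap M_i^{\v})_{\varphi_i^{\v}}$ (cf.\ \cite[Proposition 4.35]{AH12}): non-fullness should give that this tracial algebra is nontrivial, indeed diffuse, and from that a $\varphi_i^{\v}$-Haar unitary is immediate; the remaining bookkeeping in your argument (unitary lifting via \cite[Lemma 2.1]{HI15} and the iterated-limit computation) is fine but is machinery the paper's direct argument avoids.
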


\begin{proof}
We use the aforementioned ``soft'' test for being axiomatizable, that is, we show that the class of non-full type III$_\lambda$ factors is closed under ultraproducts and ultraroots.  We first show the latter.
Let $\u$ be any nonprincipal ultrafilter and let $M$ be any full type III$_\lambda$ factor; we show that $M^\u$ is also full.  To see this, let $\v$ be any nonprincipal ultrafilter. Then we have $(M^\u)^\v = M^{\v \otimes \u}$ by \cite[Proposition 2.4]{AHHM18}.  Since $M$ is a full factor, we have $M' \cap M^{\v \otimes \u} = \bb{C}1$. Since $M \subset M^\u$, we have $(M^\u)' \cap (M^\u)^\v \subseteq M' \cap (M^\u)^\v = M' \cap M^{\v \otimes \u} = \bb{C}1$. Since this holds true for any nonprincipal ultrafilter $\v$, this further implies that $M^\u$ is a full factor.

Conversely, let $(M_i, \varphi_i)_{i \in I}$ be any family of non-full type III$_\lambda$ factor W$^*$-probability spaces and $\u$ a nonprincipal ultrafilter on $I$. Then $(M^\u, \varphi^\u) = (M_i, \varphi_i)^\u$ is a type ${\rm III}_\lambda$ factor.  We show that $M^\u$ is also non-full. It is well-known that we may assume that $\u$ is \emph{countably incomplete}, meaning that there is some countable collection of sets from $\u$ that has empty intersection.  As a result, this allows us to define a sequence $(\epsilon_i)_{i\in I}$ of positive real numbers such that $\lim_\u \epsilon_i=0$.  For every $i \in I$, there is an index set $J_i$ for which there exists a sequence $(u^i_j)_{j \in J_i}$ in $\mathcal U(M_i)$ such that $\lim_j \|u^i_j \varphi_i - \varphi_i u^i_j\| = 0$, $\lim_j \|y u^i_j - u^i_j y\|^\sharp_{\varphi_i} = 0$ for every $y \in M_i$ and $\lim_j \varphi_i(u^i_j) = 0$. Let $\mathcal F = \{X_1, \dots, X_m\} \subset M^\u$ be any finite subset. Write $X_k = (x_{k, i})^\u$ for every $1 \leq k \leq m$. For every $i\in I$, there exists $v_i = u_{j_i} \in \mathcal U(M_i)$ such that $\|v_i \varphi_i - \varphi_i v_i\| \leq \epsilon_i$, $\|v_i x_{k, i} - x_{k, i} v_i\|_{\varphi_i}^\sharp \leq \epsilon_i$ and $|\varphi_i(v_i)| \leq \epsilon_i$ for every $1 \leq k \leq m$. Then we have $(v_i)_{i \in I} \in \mathfrak M^\u(M_i, \varphi_i)$, $v_{\mathcal F} = (v_i)^\u \in \mathcal U((M^\u)_{\varphi^\u})$ and $\varphi^\u(v_{\mathcal F}) = 0$ (see \cite[Proposition 2.4]{Ho14} and \cite[Lemma 4.36]{AH12}). By construction, we have $X v_{\mathcal F} = v_{\mathcal F} X$ for every $X \in \mathcal F$ and $\varphi^\u(v_{\mathcal F}) = 0$. Since this holds true for any finite subset $\mathcal F \subset M^\u$, this further implies that $M^\u$ is not full.
\end{proof}

\begin{cor}
The following three III$_1$ factors are pairwise non-elementarily equivalent:
\begin{enumerate}
    \item $R_\infty$.
    \item Any non-QWEP III$_1$ factor.
    \item Any full QWEP III$_1$ factor.
\end{enumerate}
\end{cor}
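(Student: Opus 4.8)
The plan is to distinguish the three factors pairwise using two properties that are preserved under elementary equivalence of type III$_1$ factors: being QWEP, and being full. That QWEP-ness is preserved under elementary equivalence follows from the reformulation of the main result of \cite{AHW13} recorded above, namely that a W$^*$-probability space is QWEP precisely when it is a model of the universal theory $\Th_\forall(R_\infty)$; being a model of a fixed theory is trivially an invariant of elementary equivalence. That being non-full is preserved under elementary equivalence is exactly Theorem \ref{prop:non-full}, which asserts that the class of non-full type III$_1$ factors is axiomatizable, hence closed under elementary equivalence. Throughout, we use Proposition \ref{IIIee} in order to speak of elementary equivalence of type III$_1$ factors without reference to a choice of faithful normal state.

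I would first observe that the three kinds of factors all exist. The factor $R_\infty$ is a type III$_1$ factor and is QWEP, but it is amenable and therefore not full, as an amenable full factor is of type I \cite{Co75b, Co85, Ha85}. A non-QWEP type III$_1$ factor exists by the discussion preceding the corollary: the negative solution of the Connes Embedding Problem \cite{JNVWY20} provides a non-QWEP II$_1$ factor $N$, and its free product with $R_\infty$ is a type III$_1$ factor by \cite[Theorem 4.1]{Ue10} which is non-QWEP since $N$ embeds into it with expectation. Finally, a full QWEP type III$_1$ factor is obtained as the free product $(L(\mathbb{F}_2),\tau)*(R_\infty,\omega)$, where $\omega$ is a faithful normal state on $R_\infty$ with trivial centralizer: it is a type III$_1$ factor by \cite[Theorem 4.1]{Ue10}, it is QWEP by \cite[Corollary B]{HI14} since $L(\mathbb{F}_2)$ and $R_\infty$ are QWEP, and it is a full factor by the structure theory of free-product von Neumann algebras.

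The three non-equivalences then follow immediately. The pairs $\{(1),(2)\}$ and $\{(2),(3)\}$ are separated by QWEP-ness: both $R_\infty$ and any full QWEP III$_1$ factor are models of $\Th_\forall(R_\infty)$, whereas a non-QWEP III$_1$ factor is not, so a non-QWEP III$_1$ factor is not elementarily equivalent to either of the other two. The pair $\{(1),(3)\}$ is separated by fullness: $R_\infty$ is non-full, the factor in $(3)$ is full, and by Theorem \ref{prop:non-full} non-fullness is an invariant of elementary equivalence, so $R_\infty$ is not elementarily equivalent to any full III$_1$ factor.

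I expect no genuine obstacle: the corollary is essentially an assembly of Theorem \ref{prop:non-full}, the reformulation of \cite{AHW13}, and the elementary fact that $R_\infty$, being amenable, is not full. The point that takes the most care is the existence of a full QWEP type III$_1$ factor --- specifically, checking that the free product $(L(\mathbb{F}_2),\tau)*(R_\infty,\omega)$ is indeed full --- together with the routine bookkeeping (via Proposition \ref{IIIee}) ensuring that ``elementary equivalence'' is unambiguous for type III$_1$ factors.
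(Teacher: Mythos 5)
Your proposal is correct and follows essentially the same route as the paper: the corollary is exactly the assembly of Theorem \ref{prop:non-full} (non-fullness is an elementary invariant), the reformulation of \cite{AHW13} (QWEP $=$ model of $\Th_\forall(R_\infty)$), Proposition \ref{IIIee}, and the fact that $R_\infty$, being amenable, is not full. Your explicit existence constructions (the free product with a non-QWEP II$_1$ factor, and $(L(\mathbb F_2),\tau)*(R_\infty,\omega)$ via \cite{Ue10} and \cite[Corollary B]{HI14}) are a harmless addition that the paper leaves implicit.
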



\begin{remark}
In \cite{FHS142}, the authors provide explicit axioms for having property Gamma.  It would be interesting to find explicit axioms for the class of non-full III$_\lambda$ W$^*$-probability spaces.
\end{remark}

In \cite[Theorem 4.3]{FHS142}, it is shown that, for any separable II$_1$ factor $M$, there are continuum many pairwise nonisomorphic separable II$_1$ factors elementarily equivalent to $M$.  We ask if the analogous result holds true for W*-probability spaces:

\begin{question}
For any separable W*-probability space $(M,\varphi)$, are there continuum many nonisomorphic separable $(N,\psi)$ elementarily equivalent to $(M,\varphi)$?
\end{question}

By Proposition \ref{IIIee}, if $M$ is any type III$_1$ factor, then $(M,\varphi)$ is elementarily equivalent to $(M,\psi)$ for all $\varphi,\psi\in \frak S_{fn}(M)$.  In this sense, the previous question has a somewhat trivial positive solution for type III$_1$ factors.  It would be more interesting to find continuum many nonisomophic separable models of any given theory of W*-probability spaces whose underlying von Neumann algebras themselves are pairwise nonisomorphic.

\appendix

\section{Embeddings of AFD W*-probability spaces into ultraproducts}\label{sect:appendix}

Most of the results presented in this appendix are due to Ando and Houdayer (unpublished work).

For the ease of exposition, all ultrafilters in this appendix are assumed to be nonprincipal ultrafilters on $\bb N$.  However, all of the results hold verbatim for countably incomplete ultrafilters on arbitrary index sets with only routine modifications of the proofs needed. These more general versions of the results are what are used throughout the main part of the paper.

Let $(M, \varphi)$ be any type III$_\lambda$ factor, where $\lambda \in (0, 1]$, endowed with a faithful normal state such the centralizer $(M^\u)_{\varphi^{\mathcal U}}$ of the ultraproduct state $\varphi^{\mathcal U}$ is a type II$_1$ factor. 
\begin{itemize}
\item If $M$ is of type III$_\lambda$ with $\lambda \in (0, 1)$, we may take any $\frac{2 \pi}{|\log(\lambda)|}$-periodic faithful normal state $\varphi$  on $M$. 
\item If $M$ is of type III$_1$, we may take any faithful normal state $\varphi$ on $M$ (see \cite[Proposition 4.24]{AH12}).
\end{itemize}

First, we observe that any atomic (discrete) W$^*$-probability space has a unique embedding into $(M, \varphi)^\u$ up to unitary conjugacy.

\begin{lem}\label{lem:atomic}
Let $(P, \psi)$ be any atomic W$^*$-probability space. For every $i \in \{1, 2\}$, let $\pi_i : (P, \psi) \hookrightarrow (M, \varphi)^\u$ be any embedding. Then there exists $u \in \mathcal U((M^\u)_{\varphi^{\mathcal U}})$ such that $\pi_2 = \Ad(u) \circ \pi_1$.
\end{lem}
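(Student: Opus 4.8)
The plan is to put coordinates on $(P,\psi)$ by diagonalizing $\psi$, observe that the images of the minimal projections of $P$ land in the centralizer $(M^\u)_{\varphi^\u}$, and then use that this centralizer is a type ${\rm II}_1$ factor to build the conjugating unitary by comparison of projections. Concretely, since $\psi$ is faithful and normal, $P$ has only countably many (separable) type~I blocks, so write $P=\bigoplus_k B(H_k)$; diagonalizing each density operator, choose a maximal orthogonal family $(q_n)_{n\in\bN}$ of minimal projections of $P$ with $\sum_n q_n=1$ and $\psi(q_n)=s_n>0$, a partition $\bN=\bigsqcup_k I_k$ matching the block decomposition, a distinguished index $n(k)\in I_k$ for each $k$, and matrix units $e^{(k)}_{mn}$ ($m,n\in I_k$) with $e^{(k)}_{nn}=q_n$, so that the linear span of the $e^{(k)}_{mn}$ is $\sigma$-weakly dense in $P$. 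A routine modular-theory computation gives $\sigma^\psi_t(e^{(k)}_{mn})=(s_m/s_n)^{{\rm i}t}\,e^{(k)}_{mn}$; in particular $\sigma^\psi_t$ fixes each $q_n$ and each central projection $z_k:=\sum_{m\in I_k}q_m$.

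By the characterization of inclusions of W$^*$-probability spaces (Takesaki's theorem), $\sigma^{\varphi^\u}$ leaves each $\pi_i(P)$ globally invariant and restricts there to $\pi_i\circ\sigma^\psi\circ\pi_i^{-1}$; together with the previous paragraph this gives $\pi_i(q_n)\in(M^\u)_{\varphi^\u}$ for all $n$, and shows that each $\pi_i\bigl(e^{(k)}_{m\,n(k)}\bigr)$ is a $\sigma^{\varphi^\u}$-eigenvector with eigenfunction $t\mapsto(s_m/s_{n(k)})^{{\rm i}t}$. By the standing hypothesis of the appendix, $(M^\u)_{\varphi^\u}$ is a type ${\rm II}_1$ factor; write $\tau$ for its unique tracial state, equal to the restriction of $\varphi^\u$, so $\tau(\pi_1(q_n))=\psi(q_n)=\tau(\pi_2(q_n))$ for every $n$. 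Since in a type ${\rm II}_1$ factor projections of equal trace are Murray--von Neumann equivalent, for each $k$ pick a partial isometry $w_k\in(M^\u)_{\varphi^\u}$ with $w_k^*w_k=\pi_1(q_{n(k)})$ and $w_kw_k^*=\pi_2(q_{n(k)})$, and set
\[
u_k:=\sum_{m\in I_k}\pi_2\bigl(e^{(k)}_{m\,n(k)}\bigr)\,w_k\,\pi_1\bigl(e^{(k)}_{n(k)\,m}\bigr),\qquad u:=\sum_k u_k,
\]
the sums converging $\sigma$-strongly. A direct matrix-unit computation gives $u_k^*u_k=\pi_1(z_k)$ and $u_ku_k^*=\pi_2(z_k)$, so, the $\pi_i(z_k)$ being orthogonal projections summing to $1$, $u$ is a unitary; the eigenvalue bookkeeping above shows each $u_k$, hence $u$, is fixed by $\sigma^{\varphi^\u}$, i.e.\ $u\in\mathcal U((M^\u)_{\varphi^\u})$; and for $a,b\in I_k$ one checks $u\,\pi_1(e^{(k)}_{ab})\,u^*=u_k\,\pi_1(e^{(k)}_{ab})\,u_k^*=\pi_2(e^{(k)}_{ab})$ (the cross terms vanishing because $\pi_1(e^{(k)}_{ab})\in\pi_1(z_k)M^\u\pi_1(z_k)$). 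As the $e^{(k)}_{ab}$ span a $\sigma$-weakly dense subalgebra of $P$ and $\pi_1,\pi_2,\Ad(u)$ are normal, $\pi_2=\Ad(u)\circ\pi_1$.

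The substantive ingredient is the standing hypothesis that $(M^\u)_{\varphi^\u}$ is a type ${\rm II}_1$ factor (for the type ${\rm III}_1$ case this is \cite[Proposition 4.24]{AH12}); granting it, the argument is the standard ``uniqueness up to unitary conjugacy of atomic $\ast$-subalgebras inside a factor''. The only genuine care required is organizational: choosing the matrix-unit coordinates compatibly with the block decomposition of $P$, and verifying the $\sigma$-strong convergence and unitarity of the infinite sum defining $u$, which is handled cleanly by writing $u=\sum_k u_k$ with $u_k$ a partial isometry between the orthogonal central projections $\pi_1(z_k)$ and $\pi_2(z_k)$.
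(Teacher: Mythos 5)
Your proposal is correct and follows essentially the same route as the paper: decompose $P$ into type I blocks with matrix units diagonalizing $\psi$, use the intertwining $\sigma^{\varphi^\u}_t\circ\pi_i=\pi_i\circ\sigma^\psi_t$ to place the minimal projections in the centralizer, find partial isometries between $\pi_1,\pi_2$ of a distinguished minimal projection in each block via equality of traces in the II$_1$ factor $(M^\u)_{\varphi^\u}$, and assemble $u=\sum_p\sum_k\pi_2(e^p_{k1})v_p\pi_1(e^p_{1k})$. Your extra checks (eigenvalue bookkeeping for $u\in(M^\u)_{\varphi^\u}$, strong convergence of the sum) are details the paper leaves implicit, but the argument is the same.
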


\begin{proof}
Write $P = \bigoplus_p P_p$ where each $P_p$ is a type I factor. For every $p$, 
denote by $(e^p_{kl})_{k,l}$ a system of matrix units for $P_p$ and denote by $(\lambda^p_k)_k$ positive reals such that $\sum_k \lambda_k^p = 1$ and $\frac{\psi(\, \cdot \, 1_{P_p})}{\psi(1_{P_p})} = \tau_{P_p}(\diag(\lambda^p_k) \, \cdot \,)$. For all $k, l, p$ and all $t \in \mathbb R$, we have $\sigma_t^\psi(e^p_{kl}) = (\lambda^p_k / \lambda^p_l)^{{\rm i} t} \, e^p_{kl}$. 

Observe for every $i \in \{1, 2\}$, since $\varphi^\u \circ \pi_i = \psi$ and since $\pi_i(P) \subset M^\u$ is globally invariant under $\sigma^{\varphi^\u}$, we have $\sigma_t^{\varphi^\u} \circ \pi_i = \pi_i \circ \sigma_t^\psi$ for all $t \in \mathbb R$ by \cite[Lemme 1.2.10]{Co72}. Since $(M^\u)_{\varphi^{\mathcal U}}$ is a type II$_1$ factor and since $\pi_1(e^p_{11}), \pi_2(e^p_{11}) \in (M^\u)_{\varphi^{\mathcal U}}$ and $\varphi^\u(\pi_1(e^p_{11})) = \psi(e^p_{11}) = \varphi^\u(\pi_2(e^p_{11}))$, there exists a partial isometry $v_p \in (M^\u)_{\varphi^{\mathcal U}}$ such that $v_p^*v_p = \pi_1(e^p_{11})$ and $v_pv_p^* = \pi_2(e^p_{11})$. If we let $u = \sum_p \sum_k \pi_2(e^p_{k1}) v_p \pi_1(e^p_{1k})$, we have $u \in \mathcal U((M^\u)_{\varphi^{\mathcal U}})$ and $\pi_2(e^p_{kl}) = u \pi_1(e^p_{kl}) u^*$ for all $k, l, p$. Therefore, we have $\pi_2 = \Ad(u) \circ \pi_1$.
\end{proof}

Next, we prove that the unique embedding property into $(M, \varphi)^\u$ up to unitary conjugacy, is stable under taking increasing unions.

\begin{lem}\label{lem:increasing}
Let $(P, \psi)$ be any separable W$^*$-probability space and $(P_n, \psi_n) \subseteq (P, \psi)$ any increasing sequence of W$^*$-probability subspaces such that $\bigvee_{n \in \mathbb N} (P_n, \psi_n) = (P, \psi)$. For every $i \in \{1, 2\}$, let $\pi_i : (P, \psi) \hookrightarrow (M, \varphi)^\u$ be any embedding. Assume that for every $n \in \mathbb N$, there exists $u_n \in \mathcal U((M^\u)_{\varphi^{\mathcal U}})$ such that $\pi_2(x) = u_n \pi_1(x) u_n^*$ for every $x \in P_n$. 

Then there exists $u \in \mathcal U((M^\u)_{\varphi^{\mathcal U}})$ such that $\pi_2 = \Ad(u) \circ \pi_1$.
\end{lem}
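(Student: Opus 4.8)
The plan is to produce $u$ by a diagonal (reindexation) argument over the $u_n$'s. First I would reduce the statement to countably many conditions. Since $P$ is separable and $\bigvee_n P_n = P$, Kaplansky density shows that $\bigcup_n \Ball(P_n)=\Ball\big(\bigcup_n P_n\big)$ is $\ast$-strongly dense in $\Ball(P)$, so I can fix a $\|\cdot\|_\psi^\sharp$-dense sequence $(x_k)_{k\in\bN}$ in $\Ball(P)$ with each $x_k\in P_{n_k}$ for some $n_k$. Because $\pi_1,\pi_2$ are $\psi$-preserving $\ast$-homomorphisms they are $\|\cdot\|^\sharp$-isometric, and for $u\in\mathcal U((M^\u)_{\varphi^\u})$ the map $y\mapsto uyu^\ast$ preserves $\varphi^\u$ (since $\varphi^\u(uzu^\ast)=\varphi^\u(z)$ for $u$ in the centralizer, by the KMS condition and $\sigma^{\varphi^\u}_{-i}(u)=u$), hence is $\|\cdot\|_{\varphi^\u}^\sharp$-isometric. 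Therefore the set of $x\in\Ball(P)$ with $u\pi_1(x)u^\ast=\pi_2(x)$ is $\|\cdot\|^\sharp$-closed, and it suffices to find $u\in\mathcal U((M^\u)_{\varphi^\u})$ with $u\pi_1(x_k)u^\ast=\pi_2(x_k)$ for every $k$: then this set contains the dense sequence $(x_k)$, hence all of $\Ball(P)$, giving $\pi_2=\Ad(u)\circ\pi_1$ on $\Ball(P)$ and then, by homogeneity, on all of $P$.

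For the construction I would fix representing sequences $u_n=(u_{n,i})_i^\u$ with $u_{n,i}\in\mathcal U(M)$ and $\lim_{i\to\u}\|u_{n,i}\varphi-\varphi u_{n,i}\|=0$ (possible by the description of centralizer unitaries, \cite[Lemma 4.36]{AH12}), and contractive lifts $\pi_1(x_k)=(a_{k,i})_i^\u$, $\pi_2(x_k)=(b_{k,i})_i^\u$. For each $n$, the hypothesis gives $\pi_2(x_k)=u_n\pi_1(x_k)u_n^\ast$ for every $k$ with $n_k\le n$, so the set
\[
S_n:=\Big\{\,i:\ \|u_{n,i}\varphi-\varphi u_{n,i}\|<\tfrac1n\ \text{ and }\ \|u_{n,i}a_{k,i}u_{n,i}^\ast-b_{k,i}\|_\varphi^\sharp<\tfrac1n\ \text{ for all }k\le n\text{ with }n_k\le n\,\Big\}
\]
belongs to $\u$. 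Since $\u$ is a nonprincipal (hence countably incomplete) ultrafilter on $\bN$, I fix a decreasing sequence $I_1\supseteq I_2\supseteq\cdots$ in $\u$ with empty intersection, put $T_n:=S_1\cap\cdots\cap S_n\cap I_n\in\u$ (decreasing, with empty intersection), define $n(i):=\max\{n:i\in T_n\}$ (with $n(i):=0$ if $i\notin T_1$; always finite), and set $u_i:=u_{n(i),i}$ when $n(i)\ge1$ and $u_i:=1$ otherwise, and $u:=(u_i)^\u\in\mathcal U(M^\u)$.

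Verifying the two properties of $u$ is then routine bookkeeping. For each $N$ one has $\{i:n(i)\ge N\}=T_N\in\u$, so $n(i)\to\infty$ along $\u$; using $i\in T_{n(i)}\subseteq S_{n(i)}$ we get $\|u_i\varphi-\varphi u_i\|\le 1/n(i)\to0$ along $\u$, whence $u\in\mathcal U((M^\u)_{\varphi^\u})$ (again by \cite[Lemma 4.36]{AH12}). Similarly, for fixed $k$ and $i$ with $n(i)\ge\max(k,n_k)$ (a set in $\u$, since $n_k\le n(i)$ and $k\le n(i)$ make the relevant inequality part of the definition of $S_{n(i)}$), we get $\|u_i a_{k,i}u_i^\ast-b_{k,i}\|_\varphi^\sharp\le1/n(i)\to0$ along $\u$, i.e.\ $u\pi_1(x_k)u^\ast=\pi_2(x_k)$ in $M^\u$. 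The reduction of the first paragraph then finishes the proof.

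There is no deep obstacle; the only point requiring care is arranging the diagonal sets so that the single sequence $(u_i)$ simultaneously lands in the centralizer $(M^\u)_{\varphi^\u}$ and implements all the conjugations — this is exactly where one uses the lifting description of centralizer unitaries from \cite{AH12} together with the countable incompleteness of $\u$ (automatic in this appendix, where $\u$ is an ultrafilter on $\bN$). Alternatively, one could bypass the explicit reindexation and obtain $u$ in one step from countable saturation of the ultrapower: the countable type in a single variable asserting that $u$ is a unitary in the centralizer and $u\pi_1(x_k)=\pi_2(x_k)u$ for all $k$ is finitely satisfied by the $u_n$, hence realized.
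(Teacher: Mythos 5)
Your proof is correct and follows essentially the same route as the paper's: lift each $u_n$ and the images of a countable dense family to representing sequences, form a decreasing chain of sets in $\u$ encoding the approximate commutation and conjugation estimates, and reindex diagonally to produce a single unitary in $(M^\u)_{\varphi^\u}$, concluding by a density/normality argument. Your closing aside about countable saturation is a reasonable alternative shortcut, but the main argument matches the paper's.
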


\begin{proof}
For every $n \in \mathbb N$, write $u_n := (u_m^n)^\u$ where $(u_m^n)_m \in \mathfrak M^\u(M)$ and $u_m^n \in \mathcal U(M)$ (see e.g.\ \cite[Lemma 2.1]{HI15}). For every $n \in \mathbb N$, denote by $X_n = \{y^n_k \mid k \in \mathbb N\} \subset P_n$ a $\|\cdot\|_{\psi_n}^\sharp$-dense countable subset and set $X_n^{\leq n} := \{y^n_k \mid 0 \leq k \leq n\}$. For every $i \in \{1, 2\}$ and every $b \in X_n$, write  $\pi_i(b) = (b_m^i)^\u$ where $(b^i_m)_m \in \mathfrak M^\u(M)$.

For every $n \in \mathbb N$, define 
\begin{align*}
F_n &:= \bigcap_{0 \leq k \leq n, b \in X_k^{\leq k}} \left\{m \in \mathbb N \mid \|u_m^n \, b_m^1 \, (u_m^n)^*- b_m^2 \|^\sharp_\varphi < \frac{1}{n + 1} \right\} \\
G_n &:= \left\{ m \in \mathbb N \mid m \geq n\right\} \cap \left\{m \in \mathbb N \mid \|u_m^n \varphi - \varphi u_m^n\| <  \frac{1}{n + 1} \right\} \cap \bigcap_{j = 1}^n F_j.
\end{align*}
By construction and since $\u$ is a nonprincipal ultrafilter on $\mathbb N$, $(G_n)_{n \in \mathbb N}$ is a decreasing sequence of subsets of $\u$ such that $G_0 = \mathbb N$ and $\bigcap_{n \in \mathbb N} G_n = \emptyset$. For every $m \in \mathbb N$, set $v_m = u_m^n \in \mathcal U(M)$ where $n \in \mathbb N$ is the unique integer such that $m \in G_n \setminus G_{n + 1}$.

Let $n \in \mathbb N$. If $m \in G_n = \bigcup_{j \geq n} G_j \setminus G_{j + 1}$, denote by $p \geq n$ the unique integer such that $m \in G_p \setminus G_{p + 1}$. Since $v_m = u_m^p$, we have 
\begin{itemize}
\item $\|v_m \varphi - \varphi v_m\| = \|u_m^p \varphi - \varphi u_m^p\| < \frac{1}{p + 1} \leq \frac{1}{n + 1}$ and
\item $\|v_m \, b_m^1 \, v_m^*- b_m^2 \|^\sharp_\varphi = \|u_m^p \, b_m^1 \, (u_m^p)^*- b_m^2 \|^\sharp_\varphi < \frac{1}{p + 1} \leq \frac{1}{n + 1}$ for all $0 \leq k \leq n$ and all $b \in X_k^{\leq k}$.
\end{itemize}
This implies that 
\begin{align*}
G_n &\subset \left\{ m \in \mathbb N \mid \|v_m \varphi - \varphi v_m\| < \frac{1}{n + 1} \right \} \\
G_n &\subset \bigcap_{0 \leq k \leq n, b \in X_k^{\leq k}} \left\{ m \in \mathbb N \mid \|v_m \, b_m^1 \, v_m^*- b_m^2 \|^\sharp_\varphi < \frac{1}{n + 1} \right\}.
\end{align*} 
Since $G_n \in \u$, it follows that 
\begin{align*}
\left \{ m \in \mathbb N \mid \|v_m \varphi - \varphi v_m\| < \frac{1}{n + 1} \right\} & \in \u  \\
\forall 0 \leq k \leq n, \forall b \in X_k^{\leq k}, \quad  \left\{m \in \mathbb N \mid  \|v_m \, b_m^1 \, v_m^*- b_m^2 \|^\sharp_\varphi < \frac{1}{n + 1} \right\} & \in \u.
\end{align*} 
Since this holds for every $n \in \mathbb N$, we obtain $ \lim_{m \to \u} \|v_m \varphi - \varphi v_m\| = 0$. This implies that $(v_m)_m \in \mathfrak M^\u(M)$ and $u = (v_m)^\u \in \mathcal U((M^\u)_{\varphi^\u})$. This further implies that for every $k \in \mathbb N$ and every $b \in X_k^{\leq k}$, we have $\|u \pi_1(b)u^* - \pi_2(b)\|_{\varphi^\u}^\sharp = \lim_{m \to \u} \|v_m \, b_m^1 \, v_m^* - b_m^2\|_\varphi^\sharp = 0$. Since  $\bigcup_{k \in \mathbb N} X_k^{\leq k} = \bigcup_{n \in \mathbb N} X_n$, since for every $n \in \mathbb N$, the set $X_n$ is $\|\cdot\|_{\psi_n}^\sharp$-dense in $P_n$, since $\bigvee_{n \in \mathbb N} P_n = P$ and since for every $i \in \{1, 2\}$, $\pi_i : P \hookrightarrow M^\u$ is a normal embedding, this finally implies that $\pi_2(x) = u \pi_1(x) u^*$ for every $x \in P$.
\end{proof}

\begin{defn}\label{defn:AFD}
We say that a W$^*$-probability space $(P, \psi)$ is \emph{approximately finite dimensional} (AFD) if there exists an increasing sequence of finite dimensional W$^*$-probability subspaces $(P_n, \psi_n) \subseteq (P, \psi)$ such that $\bigvee_{n \in \mathbb N} (P_n, \psi_n) = (P, \psi)$.
\end{defn}

If $(P, \psi)$ is an AFD W$^*$-probability space, then $\psi$ is necessarily an almost periodic state on $P$.

\begin{examples}\label{ex:AFD}
Here are the main examples of AFD W$^*$-probability spaces:
\begin{enumerate}
\item Every AFD tracial von Neumann algebra $(M, \tau)$ endowed with a tracial faithful normal state is an AFD W$^*$-probability space.
\item For every $\lambda \in (0, 1)$, endow the type III$_\lambda$ Powers factor $R_\lambda$ with its canonical $\frac{2\pi}{|\log(\lambda)|}$-periodic faithful normal state $\varphi_\lambda$. Then $(R_\lambda, \varphi_\lambda)$ is an AFD W$^*$-probability space.
\item Endow the type III$_1$ Araki-Woods factor $R_\infty = R_{\lambda_1} \otimes R_{\lambda_2}$, where $\frac{\log(\lambda_1)}{\log(\lambda_2)} \notin \mathbb Q$, with the faithful normal state $\varphi = \varphi_{\lambda_1} \otimes \varphi_{\lambda_2}$. Then $(R_\infty, \varphi)$ is an AFD W$^*$-probability space.
\item For every AFD type III$_0$ factor $P$, there exists a faithful normal state $\varphi$ on $M$ such that $(M, \varphi)$ is an AFD W$^*$-probability space (see \cite[Theorem 1]{Co74b}).
\end{enumerate}
\end{examples}

We deduce that any AFD W$^*$-probability space has a unique embedding into $(M, \varphi)^\u$ up to unitary conjugacy.

\begin{thm}\label{thm:uniqueness}
Let $(P, \psi)$ be any {\em AFD} W$^*$-probability space. For every $i \in \{1, 2\}$, let $\pi_i : (P, \psi) \hookrightarrow (M, \varphi)^\u$ be any embedding. Then there exists $u \in \mathcal U((M^\u)_{\varphi^{\mathcal U}})$ such that $\pi_2 = \Ad(u) \circ \pi_1$.
\end{thm}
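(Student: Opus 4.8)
The plan is to reduce the theorem to the two lemmas already in hand, namely Lemma~\ref{lem:atomic} and Lemma~\ref{lem:increasing}, by exploiting the very definition of an AFD W$^*$-probability space. Write $(P,\psi)=\bigvee_{n\in\mathbb N}(P_n,\psi_n)$ as in Definition~\ref{defn:AFD}, with each $(P_n,\psi_n)\subseteq(P,\psi)$ a finite-dimensional W$^*$-probability subspace and the sequence increasing. In particular $P$ has separable predual, so the separability hypothesis of Lemma~\ref{lem:increasing} is automatically satisfied. Moreover each finite-dimensional von Neumann algebra $P_n$ is a finite direct sum of matrix algebras, hence is an \emph{atomic} W$^*$-probability space of the form $\bigoplus_p (P_n)_p$ with each $(P_n)_p$ a type~I factor, so Lemma~\ref{lem:atomic} applies to it.

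Next I would check that the hypotheses of Lemma~\ref{lem:increasing} are genuinely met at each finite stage. For each $n$ and each $i\in\{1,2\}$, the map $\pi_i$ restricts to an embedding $\pi_i\colon (P_n,\psi_n)\hookrightarrow (M,\varphi)^\u$ of W$^*$-probability spaces, since this is the composition of the inclusion $(P_n,\psi_n)\subseteq(P,\psi)$ with $\pi_i$, and a composition of state-preserving normal $\ast$-embeddings admitting faithful normal conditional expectations is again of this form. Applying Lemma~\ref{lem:atomic} to the atomic space $(P_n,\psi_n)$ and the two embeddings $\pi_1|_{P_n},\pi_2|_{P_n}$ produces a unitary $u_n\in\mathcal U((M^\u)_{\varphi^{\mathcal U}})$ with $\pi_2(x)=u_n\pi_1(x)u_n^*$ for every $x\in P_n$.

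Finally, we are exactly in the setting of Lemma~\ref{lem:increasing}: a separable $(P,\psi)$, an increasing exhausting sequence $(P_n,\psi_n)$, two embeddings $\pi_1,\pi_2\colon(P,\psi)\hookrightarrow(M,\varphi)^\u$, and centralizer unitaries $u_n$ conjugating $\pi_1$ to $\pi_2$ on each $P_n$. That lemma yields a single $u\in\mathcal U((M^\u)_{\varphi^{\mathcal U}})$ with $\pi_2=\Ad(u)\circ\pi_1$, which is the desired conclusion.

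I do not expect a substantial obstacle here: the two lemmas already carry all the analytic weight — the matrix-unit bookkeeping that builds the conjugating unitary in the atomic case, and the nested-decreasing-sets diagonal argument that splices the $u_n$ together in the increasing case. The only points worth spelling out in the write-up are the elementary observations that finite-dimensional von Neumann algebras are atomic and that restrictions of embeddings of W$^*$-probability spaces to W$^*$-subspaces remain embeddings with expectation, so that the hypotheses of Lemmas~\ref{lem:atomic} and \ref{lem:increasing} are literally in force.
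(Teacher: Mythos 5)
Your argument is correct and is exactly the paper's proof: decompose $(P,\psi)$ as an increasing union of finite-dimensional (hence atomic) subspaces, apply Lemma~\ref{lem:atomic} at each stage to get the unitaries $u_n$, and then invoke Lemma~\ref{lem:increasing} to produce the single conjugating unitary $u$. The extra checks you mention (separability of $P$ and that restrictions of embeddings remain embeddings with expectation) are fine and are implicitly taken for granted in the paper.
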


\begin{proof}
Let $(P_n, \psi_n) \subseteq (P, \psi)$ be an increasing sequence of finite dimensional W$^*$-probability subspaces such that $\bigvee_{n \in \mathbb N} (P_n, \psi_n) = (P, \psi)$. For every $n \in \mathbb N$, Lemma \ref{lem:atomic} implies that there exists $u_n \in \mathcal U((M^\u)_{\varphi^{\mathcal U}})$ such that $\pi_2(x) = u_n \pi_1(x) u_n^*$ for every $x \in P_n$. Then Lemma \ref{lem:increasing} implies that there exists $u \in \mathcal U((M^\u)_{\varphi^{\mathcal U}})$ such that $\pi_2(x) = u \pi_1(x) u^*$ for every $x \in P$.
\end{proof}

As a straightforward consequence of Theorem \ref{thm:uniqueness}, we obtain the following unique embedding property for the Powers factors $R_\lambda$ where $\lambda \in (0, 1)$.

\begin{cor}\label{cor:uniqueness-lambda}
Let $\lambda \in (0, 1)$ and $\pi : (R_\lambda, \varphi_\lambda) \hookrightarrow (R_\lambda, \varphi_\lambda)^{\u}$ be any embedding. Then there exists $u \in \mathcal U((R_\lambda^\u)_{\varphi_\lambda^{\mathcal U}})$ such that $\Ad(u) \circ \pi : (R_\lambda, \varphi_\lambda) \hookrightarrow (R_\lambda, \varphi_\lambda)^{\u}$ is the diagonal embedding.
\end{cor}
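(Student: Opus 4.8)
The plan is to deduce this statement as a direct specialization of Theorem~\ref{thm:uniqueness}, taking both the ambient W$^*$-probability space and the one being embedded to be $(R_\lambda,\varphi_\lambda)$ itself. First I would check that $(R_\lambda,\varphi_\lambda)$ meets the standing hypothesis of this appendix, namely that it is a type ${\rm III}_\lambda$ factor equipped with a faithful normal state whose ultraproduct state has a type ${\rm II}_1$ centralizer $(R_\lambda^\u)_{\varphi_\lambda^{\mathcal U}}$. This is exactly the first bulleted case at the start of the appendix: $\varphi_\lambda$ is the canonical $\frac{2\pi}{|\log(\lambda)|}$-periodic faithful normal state on $R_\lambda$. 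Hence we may legitimately take $(M,\varphi):=(R_\lambda,\varphi_\lambda)$ in Theorem~\ref{thm:uniqueness}.

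Next I would observe that $(R_\lambda,\varphi_\lambda)$ is AFD in the sense of Definition~\ref{defn:AFD}. Indeed, by definition $(R_\lambda,\varphi_\lambda)\cong(M_2(\bC),\omega_\lambda)^{\otimes\bN}$, and the partial tensor products $(M_2(\bC),\omega_\lambda)^{\otimes\{0,\dots,n\}}$ form an increasing sequence of finite-dimensional W$^*$-probability subspaces whose union generates $R_\lambda$; this is recorded in Examples~\ref{ex:AFD}(2). So the hypotheses of Theorem~\ref{thm:uniqueness} are satisfied with $(P,\psi):=(R_\lambda,\varphi_\lambda)$.

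Finally I would apply Theorem~\ref{thm:uniqueness} with $\pi_1:=\pi$ and $\pi_2$ equal to the diagonal embedding $(R_\lambda,\varphi_\lambda)\hookrightarrow(R_\lambda,\varphi_\lambda)^\u$ (which is an embedding of W$^*$-probability spaces in the required sense, the conditional expectation being $E_\u$, as noted in Subsection~2.2). The theorem then produces $u\in\mathcal U((R_\lambda^\u)_{\varphi_\lambda^{\mathcal U}})$ with $\pi_2=\Ad(u)\circ\pi_1$, that is, $\Ad(u)\circ\pi$ is the diagonal embedding, which is precisely the assertion of the corollary.

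There is essentially no obstacle: the corollary is a formal consequence of Theorem~\ref{thm:uniqueness}, all of whose conceptual content lies in Lemmas~\ref{lem:atomic} and~\ref{lem:increasing}. The only things requiring verification are the two membership facts above (periodicity of $\varphi_\lambda$ forcing a type ${\rm II}_1$ centralizer, and $R_\lambda$ being AFD), and both are already supplied in the surrounding text.
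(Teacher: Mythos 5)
Your proposal is correct and matches the paper's argument: the paper also obtains this corollary as an immediate specialization of Theorem~\ref{thm:uniqueness}, using that $\varphi_\lambda$ is the periodic state (so the standing hypothesis on the centralizer $(R_\lambda^\u)_{\varphi_\lambda^{\u}}$ holds) and that $(R_\lambda,\varphi_\lambda)$ is AFD as in Examples~\ref{ex:AFD}(2), taking $\pi_1=\pi$ and $\pi_2$ the diagonal embedding.
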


Combining Theorem \ref{thm:uniqueness} with Connes-St\o rmer transitivity theorem, we obtain the following unique embedding property for the Araki-Woods factor $R_\infty$.

\begin{cor}\label{cor:uniqueness-1}
Let $\psi$ be any faithful normal state on $R_\infty$ and $\pi : (R_\infty, \psi) \hookrightarrow (R_\infty, \psi)^{\u}$ any embedding. Then there exists $u \in \mathcal U((R_\infty^\u)_{\psi^\u})$ such that $\Ad(u) \circ \pi : (R_\infty, \psi) \hookrightarrow (R_\infty, \psi)^{\u}$ is the diagonal embedding.
\end{cor}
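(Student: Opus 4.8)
The plan is to deduce the statement from Theorem \ref{thm:uniqueness}, which handles AFD W$^*$-probability spaces, using the Connes-St\o rmer transitivity theorem to pass to a general state. First I note that, since $R_\infty$ is of type ${\rm III}_1$, the centralizer $(R_\infty^\u)_{\psi^\u}$ is a type ${\rm II}_1$ factor by \cite[Proposition 4.24]{AH12}, so Theorem \ref{thm:uniqueness} applies to embeddings of any AFD W$^*$-probability space into $(R_\infty,\psi)^\u$. The obstacle is that $(R_\infty,\psi)$ is itself AFD only when $\psi$ is almost periodic (cf.\ the remark after Definition \ref{defn:AFD}); for a general $\psi$ the centralizer $(R_\infty)_\psi$ may even be trivial, so there is no $\sigma^\psi$-invariant filtration of $R_\infty$ by finite-dimensional subalgebras, and the matrix-unit comparison of Lemma \ref{lem:atomic} cannot be carried out inside $(R_\infty,\psi)$ directly.

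To circumvent this I would fix $\lambda_1,\lambda_2\in(0,1)$ with $\log(\lambda_1)/\log(\lambda_2)$ irrational and set $\varphi_0:=\varphi_{\lambda_1}\otimes\varphi_{\lambda_2}$, so that $(R_\infty,\varphi_0)$ is AFD by Example \ref{ex:AFD}(3). By the Connes-St\o rmer transitivity theorem the unitary orbit of $\varphi_0$ in $R_\infty$ is norm-dense in the state space, so there is a sequence $(u_n)_n$ in $\mathcal U(R_\infty)$ with $\|u_n\varphi_0 u_n^*-\psi\|\to 0$. Each $\psi_n:=u_n\varphi_0 u_n^*$ is almost periodic, $\Ad(u_n)$ is a state-preserving isomorphism $(R_\infty,\varphi_0)\to(R_\infty,\psi_n)$, and hence $(R_\infty,\psi_n)$ is AFD; transporting the canonical $\sigma^{\varphi_0}$-invariant finite-dimensional filtration of $(R_\infty,\varphi_0)$ by $\Ad(u_n)$ gives an increasing sequence of finite-dimensional subalgebras of $R_\infty$ that is $\sigma^{\psi_n}$-invariant and generates $R_\infty$. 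The essential feature is that $\psi_n\to\psi$ in \emph{norm}.

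Given $\pi$ and the diagonal embedding $d\colon(R_\infty,\psi)\hookrightarrow(R_\infty,\psi)^\u$, I would then re-run the argument underlying Theorem \ref{thm:uniqueness}, namely Lemmas \ref{lem:atomic} and \ref{lem:increasing}, with the error bookkeeping governed by $\|\psi_n-\psi\|$. On each finite-dimensional block $B$ of the $\sigma^{\psi_n}$-invariant filtration one compares the finite systems of matrix units spanning $\pi(B)$ and $d(B)$ as in Lemma \ref{lem:atomic}, using that $B$ carries a system of matrix units whose diagonal projections lie in $(R_\infty)_{\psi_n}$; the partial-isometry construction of that lemma then yields a conjugating unitary which, because $\|\psi_n-\psi\|$ is small, $\|\cdot\|$-almost commutes with $\psi^\u$, so that after conjugation $\pi$ agrees with $d$ on $B$ up to a controlled error. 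A telescoping argument in the spirit of Lemma \ref{lem:increasing} --- organizing the blocks, the indices $n$, and the conjugating unitaries along a decreasing chain of members of $\u$ with empty intersection --- should then compile these data into a single unitary $u\in\mathcal U((R_\infty^\u)_{\psi^\u})$ with $\Ad(u)\circ\pi(x)=x$ for all $x$ in a $\|\cdot\|_\psi^\sharp$-dense subset of $R_\infty$, hence for all $x\in R_\infty$. I expect the main obstacle to be exactly this interplay of the modular flow with the approximation: one must verify that every defect incurred by replacing $\psi$ by $\psi_n$ --- in the state-preservation of $\pi$ and $d$ on the blocks, in the commutation of the conjugating unitaries with $\psi^\u$, and in the telescoping estimates --- tends to $0$ along $\u$, so that the compiled unitary genuinely lands in the centralizer $(R_\infty^\u)_{\psi^\u}$ and the resulting conjugacy is exact.
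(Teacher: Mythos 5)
Your overall instinct --- reduce to the AFD case via Connes--St{\o}rmer transitivity and Theorem \ref{thm:uniqueness} --- is the right one, but as written the plan has a genuine gap: all of the technical content is deferred to unproven ``approximate'' versions of Lemma \ref{lem:atomic} and Lemma \ref{lem:increasing}, and those lemmas cannot be invoked as stated. For fixed $n$, your finite-dimensional blocks are $\sigma^{\psi_n}$-invariant but not $\sigma^{\psi}$-invariant, and $\pi$ preserves $\psi$ but not $\psi_n$ and does not intertwine $\sigma^{\psi_n}$ with $\sigma^{\psi_n^\u}$; consequently the images of the diagonal matrix units lie only \emph{approximately} in $(R_\infty^\u)_{\psi^\u}$ (and not exactly in $(R_\infty^\u)_{\psi_n^\u}$ either), so the hypothesis of Lemma \ref{lem:atomic} --- exact membership of the relevant projections in a type ${\rm II}_1$ centralizer --- fails. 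Comparing two approximately centralizing projections by a partial isometry that approximately centralizes $\psi^\u$, with defects controlled by $\|\psi_n-\psi\|$, is a nontrivial perturbation statement you would have to prove from scratch. Worse, your filtration changes with $n$ (it is $\Ad(u_n)$ of a fixed one), so there is no single increasing exhaustion and the telescoping of Lemma \ref{lem:increasing} does not apply; you would need a new diagonal argument handling simultaneously the varying filtrations, the only-approximate intertwining of $\pi$ and the diagonal embedding, and a quantitative lifting of the stage-$n$ unitaries to representing sequences almost commuting with $\psi$. Finally, the membership $u\in\mathcal U((R_\infty^\u)_{\psi^\u})$ is asserted to ``fall out'' of the bookkeeping, but this is exactly the kind of point that needs its own argument.

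The paper avoids all of this by performing the state change \emph{exactly and upstairs}: with $E:R_\infty^\u\to\pi(R_\infty)$ the expectation satisfying $\psi\circ\pi^{-1}\circ E=\psi^\u$ and $\varphi$ an AFD state as in Example \ref{ex:AFD}(3), it applies the strict homogeneity of the state space of $R_\infty^\u$ (\cite[Theorem 4.20]{AH12}, which is the ultrapower form of Connes--St{\o}rmer transitivity, giving exact rather than approximate unitary conjugacy) to $\phi:=\varphi\circ\pi^{-1}\circ E$, obtaining $w\in\mathcal U(R_\infty^\u)$ with $\phi=\varphi^\u\circ\Ad(w)$. Then $\Ad(w)\circ\pi$ is an honest embedding of the AFD space $(R_\infty,\varphi)$ into $(R_\infty,\varphi)^\u$, Theorem \ref{thm:uniqueness} applies verbatim to give $v\in\mathcal U((R_\infty^\u)_{\varphi^\u})$ with $\Ad(v)\circ\Ad(w)\circ\pi$ the diagonal embedding, and a short conditional-expectation computation (showing $E_\u=\Ad(u)\circ E\circ\Ad(u^*)$ for $u=vw$, hence $\psi^\u\circ\Ad(u^*)=\psi^\u$) places $u$ in $(R_\infty^\u)_{\psi^\u}$. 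If you want to repair your proposal, the cleanest fix is to replace your sequence of approximate conjugations downstairs by this single exact conjugation in the ultrapower and then quote Theorem \ref{thm:uniqueness} as stated.
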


\begin{proof}
Denote by $E : R_\infty^\u \to \pi(R_\infty)$ the unique faithful normal conditional expectation such that $\psi \circ \pi^{-1} \circ E = \psi^\u$. Choose a faithful normal state $\varphi$ on $R_\infty$ such that $(R_\infty, \varphi)$ is an AFD W$^*$-probability space (see Example \ref{ex:AFD}(3)). Set $\phi = \varphi \circ \pi^{-1} \circ E \in (R_\infty^\u)_\ast$. By \cite[Theorem 4.20]{AH12}, there exists $w \in \mathcal U(R_\infty^\mathcal U)$ such that $ \phi  = \varphi^\u \circ \Ad(w)$.  (This result indeed holds for countably incomplete ultrafilters on arbitrary index sets.) Then $\pi_w = \Ad(w) \circ \pi : R_\infty \hookrightarrow R_\infty^{\u}$ is an embedding such that $\varphi^\u \circ \pi_w = \varphi^\u \circ \Ad(w) \circ \pi = \phi \circ \pi = \varphi$. Moreover, the faithful normal conditional expectation $E_w = \Ad(w) \circ E \circ \Ad(w^*) : R_\infty^\u \to w \pi(R_\infty)w^*$ satisfies $\varphi \circ \pi_w^{-1} \circ E_w = \varphi \circ \pi^{-1} \circ \Ad(w^*) \circ \Ad(w) \circ E \circ \Ad(w^*) = \varphi^\u$. Thus, $\pi_w : (R_\infty, \varphi) \hookrightarrow (R_\infty, \varphi)^\u$ is an embedding of W$^*$-probability spaces. 

By Theorem \ref{thm:uniqueness}, there exists $v \in \mathcal U((R_\infty^\u)_{\varphi^\u})$ such that $\Ad(v) \circ \pi_w : (R_\infty, \varphi) \hookrightarrow (R_\infty, \varphi)^\u$ is the diagonal embedding $\iota : (R_\infty, \varphi) \hookrightarrow (R_\infty, \varphi)^\u$. Set $u  = vw \in \mathcal U(R_\infty^\u)$. Then we have $\Ad(u) \circ \pi = \iota$ and $\varphi^\u \circ \Ad(u) \circ \pi = \varphi = \varphi^\u \circ \iota$. Denote by $E_\u : R_\infty^\u \to \iota(R_\infty)$ the canonical faithful normal expectation. Note that $E_u = \Ad(u) \circ E \circ \Ad(u^*) : R_\infty^\u \to u\pi(R_\infty)u^*$ is another faithful normal conditional expectation onto $\iota(R_\infty) = u\pi(R_\infty)u^*$. Since 
\begin{align*}
\varphi \circ \pi^{-1} \circ \Ad(u^*) \circ E_u &= \varphi \circ \pi^{-1} \circ \Ad(u^*) \circ \Ad(u) \circ E \circ \Ad(u^*) \\
&= \phi \circ \Ad(w^*) \circ \Ad(v^*) \\
&= \varphi^\u \\
&= \varphi \circ \iota^{-1} \circ E_\u,
\end{align*}
we have $E_\u = E_u =  \Ad(u) \circ E \circ \Ad(u^*)$. This further implies that 
\begin{align*}
\psi^\u &= \psi \circ \iota^{-1} \circ E_\u \\
&= \psi \circ \pi^{-1} \circ \Ad(u^*) \circ \Ad(u) \circ E \circ \Ad(u^*) \\
&= \psi \circ \pi^{-1} \circ E \circ \Ad(u^*) \\
&= \psi^\u \circ \Ad(u^*)
\end{align*}
and so $u \in \mathcal U((R_\infty^\u)_{\psi^\u})$, finishing the proof.
\end{proof}

We may also apply Theorem \ref{thm:uniqueness} to the structure of ultraproduct type III$_1$ factors. The next result extends \cite[Theorem 4.5]{FGL05}.

\begin{prop}\label{prop:prime}
Let $M$ be any $\sigma$-finite type ${\rm III}_1$ factor. Then $M^\u$ is a prime factor, that is, $M^\u \ncong M_1 \otimes M_2$ for any diffuse factors $M_1, M_2$.
\end{prop}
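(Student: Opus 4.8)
The plan is to exploit the large centralizer of a type ${\rm III}_1$ ultrapower in order to reduce primeness of $M^\u$ to the primeness of an ultrapower of a ${\rm II}_1$ factor. Since $M$ is of type ${\rm III}_1$, so is $M^\u$, and by \cite[Proposition 4.24]{AH12} we may fix a faithful normal state $\varphi$ on $M$ for which $N:=(M^\u)_{\varphi^\u}$ is a ${\rm II}_1$ factor; note that $N$ is globally $\sigma^{\varphi^\u}$-invariant, so there is a $\varphi^\u$-preserving normal conditional expectation $M^\u\to N$ by Takesaki's theorem. Suppose, toward a contradiction, that $M^\u=M_1\otimes M_2$ for diffuse factors $M_1,M_2$.

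First I would arrange that the state splits across the decomposition. Pick faithful normal states $\psi_i$ on $M_i$; then $\psi_1\otimes\psi_2$ is a faithful normal state on $M^\u$, so by the strict homogeneity of the state space of $M^\u$ (\cite[Theorem 4.20]{AH12}) there is $w\in\mathcal U(M^\u)$ with $(\psi_1\otimes\psi_2)\circ\Ad(w)=\varphi^\u$. Replacing $M_1\otimes M_2$ by the conjugate decomposition $\Ad(w^*)(M_1)\otimes\Ad(w^*)(M_2)$ (still a tensor product of diffuse factors, and $\varphi^\u$ is unchanged), I may assume $\varphi^\u=\psi_1\otimes\psi_2$. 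Then $\sigma^{\varphi^\u}=\sigma^{\psi_1}\otimes\sigma^{\psi_2}$, whence $N=(M^\u)_{\varphi^\u}=(M_1)_{\psi_1}\otimes(M_2)_{\psi_2}$. Since $N$ is a factor, so are both $(M_i)_{\psi_i}$, and since $N$ is finite they are finite. A short analysis of the possible types, using that $N$ must be a factor, eliminates $M_i$ of type ${\rm III}_0$ and of properly infinite type ${\rm I}$ or ${\rm II}$ (a faithful normal state on such a factor never has factorial centralizer); using in addition that $M^\u$ is ${\rm III}_1$, one sees that each $M_i$ is of type ${\rm II}_1$ or ${\rm III}_\lambda$ with $\lambda\in(0,1]$, and that the $\psi_i$ can be chosen so that each $(M_i)_{\psi_i}$ is a ${\rm II}_1$ factor (the trace when $M_i$ is ${\rm II}_1$, a periodic state when $M_i$ is ${\rm III}_\lambda$ with $\lambda<1$, and any faithful normal state when $M_i$ is ${\rm III}_1$, since on a ${\rm III}_1$ factor the centralizer of a faithful normal state is infinite-dimensional, hence here a ${\rm II}_1$ factor).

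With this in hand, $N=(M_1)_{\psi_1}\otimes(M_2)_{\psi_2}$ exhibits $N$ as a nontrivial tensor product of two ${\rm II}_1$ factors. On the other hand, $N=(M^\u)_{\varphi^\u}$ is the centralizer of the ultrapower state of a type ${\rm III}_1$ factor, and as such it is, for the purposes of primeness, an ultrapower-type ${\rm II}_1$ factor: when $\varphi$ can be taken almost periodic it is literally the tracial ultrapower $(M_\varphi)^\u$ (\cite[Proposition 4.27]{AH12}), and in general it inherits enough saturation from $M^\u$ that the argument of \cite[Theorem 4.5]{FGL05} applies to it. This primeness of $N$ contradicts the displayed tensor decomposition, completing the proof.

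The step I expect to be the main obstacle is the last one: showing that the centralizer of the ultrapower state of an arbitrary (in particular, not almost periodic) type ${\rm III}_1$ factor is itself prime, that is, that the primeness of ultrapowers of ${\rm II}_1$ factors \cite[Theorem 4.5]{FGL05} genuinely covers $N=(M^\u)_{\varphi^\u}$. A lesser technical point is the type analysis of the second paragraph, which must guarantee that both tensor factors of $N$ are honest ${\rm II}_1$ factors rather than, say, finite-dimensional.
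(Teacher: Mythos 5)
Your reduction breaks at its very first load-bearing step. From $\varphi^\u=\psi_1\otimes\psi_2$ and $\sigma^{\varphi^\u}=\sigma^{\psi_1}\otimes\sigma^{\psi_2}$ you conclude $(M^\u)_{\varphi^\u}=(M_1)_{\psi_1}\otimes(M_2)_{\psi_2}$, but the centralizer of a tensor product state only \emph{contains} the tensor product of the centralizers, and in general strictly: the fixed-point algebra of a tensor product of modular flows does not split. A concrete witness: on $R_\lambda\otimes R_\lambda$ with the state $\varphi_\lambda\otimes\varphi_\lambda$, take the matrix unit $e_{12}$ sitting in one $2\times 2$ tensor slot of the first copy and $e_{21}$ in a slot of the second copy; then $\sigma^{\varphi_\lambda}_t(e_{12})=\lambda^{{\rm i}t}e_{12}$ and $\sigma^{\varphi_\lambda}_t(e_{21})=\lambda^{-{\rm i}t}e_{21}$, so $e_{12}\otimes e_{21}$ lies in $(R_\lambda\otimes R_\lambda)_{\varphi_\lambda\otimes\varphi_\lambda}$, yet its image under $\id\otimes E_{(R_\lambda)_{\varphi_\lambda}}$ vanishes, so it is not in $(R_\lambda)_{\varphi_\lambda}\otimes(R_\lambda)_{\varphi_\lambda}$. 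Hence a tensor decomposition of $M^\u$ need not induce the tensor decomposition of $N=(M^\u)_{\varphi^\u}$ that your argument requires, and the type analysis in your second paragraph (which already presupposes that each $(M_i)_{\psi_i}$ is a factor because $N$ is) never gets off the ground. The second load-bearing claim, that $N$ is prime because ``the argument of \cite[Theorem 4.5]{FGL05} applies,'' is also not established: for a general $\sigma$-finite ${\rm III}_1$ factor the state $\varphi$ need not be almost periodic, $M_\varphi$ can be very small, and $(M^\u)_{\varphi^\u}$ is not a tracial ultrapower of a ${\rm II}_1$ factor, so proving its primeness is essentially a problem of the same depth as the proposition itself.

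The paper avoids both issues by running the asymptotic-orthogonality argument directly inside $M^\u$, and the ingredient your proposal is missing is Theorem \ref{thm:uniqueness}: any AFD W$^*$-probability space embeds into $(M,\varphi)^\u$ uniquely up to conjugation by a unitary of $(M^\u)_{\varphi^\u}$. Given $M^\u=M_1\otimes M_2$ with $M_1,M_2$ diffuse, one chooses separable diffuse abelian subalgebras $A_i\subset M_i$ with expectation; uniqueness of embeddings yields $w\in\mathcal U(M^\u)$ with $wA_1w^*=A_2$. Choosing states $\varphi_i$ on $M_i$ with $A_i\subset (M_i)_{\varphi_i}$ and unitaries $u_n\in\mathcal U(A_1)$ tending weakly to $0$, one computes $E_{A_2}\bigl(x(u_n\otimes 1)y\bigr)\to 0$ for all $x,y\in M^\u$ (first for elementary tensors, then by density), and taking $x=w$, $y=w^*$ contradicts $\|w(u_n\otimes 1)w^*\|_{\varphi_2}=1$ since $w(u_n\otimes1)w^*\in A_2$. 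Without the uniqueness theorem (or some substitute producing a single unitary matching up the two ``independent'' diffuse directions), neither your splitting of the centralizer nor the primeness of $N$ can be pushed through, so the proposal as written has a genuine gap rather than a repairable technicality.
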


\begin{proof}
By contradiction, assume that $M^\u$ is not prime and write $M^\u = M_1 \otimes M_2$. For every $i = 1, 2$, choose a separable diffuse abelian von Neumann subalgebra $A_i \subset M_i$ with expectation. By Theorem \ref{thm:uniqueness}, there exists a unitary $w \in \mathcal U(M^\u)$ such that $w A_1 w^* = A_2$. 

For every $i = 1, 2$, choose a faithful normal state $\varphi_i$ on $M_i$ such that $A_i \subset (M_i)_{\varphi_i}$ and define the faithful normal conditional expectation $E_2 : M_1 \otimes M_2 \to M_2$ by $E_2 = \varphi_1 \otimes \id_{M_2}$. Choose a sequence of unitaries $(u_n)$ in $\mathcal U(A_1)$ such that $u_n \to 0$ weakly. For all $i \in \{1, 2\}$ and all $x_i,y_i \in M_i$, we have
$$\lim_n E_{A_2}((x_1 \otimes x_2)(u_n \otimes 1)(y_1 \otimes y_2)) = \lim_n \varphi_1(x_1 u_n y_1) x_2y_2 = 0.$$
By strong density of linear combinations of elementary tensors in $M_1 \otimes M_2$ and since $u_n \in A_1 \subset (M_1)_{\varphi_1}$, it follows that for all $x, y \in M_1 \otimes M_2 = M^\u$, we have
$$\lim_n E_{A_2}(x(u_n \otimes 1)y) = 0.$$
Applying the above result to $x = w$ and $y = w^*$ and since $w A_1 w^* = A_2$, we obtain 
$$1 = \lim_n \|w(u_n \otimes 1)w^*\|_{\varphi_2} = \lim_n \|E_{A_2}(w(u_n \otimes 1)w^*)\|_{\varphi_2} = 0.$$
This is a contradiction.
\end{proof}

\section{Keisler's Sandwich Theorem}

In this section we prove Keisler's Sandwich Theorem (Theorem \ref{keisler}).  To simplify the matter, we work in the traditional $[0,1]$-valued version of continuous logic presented in \cite{BBHU08}.  We freely use the notation and terminology established in \cite{BBHU08}.

Fix a language $L$.  Let $L_\exists$ be the language obtained by adding a predicate $P_\varphi$ for every existential $L$-formula $\varphi$.  It is clear that an embedding between $L_\exists$-structures is an existential embedding of their $L$-reducts and, conversely, any existential embedding between $L$-structures is an embedding of their canonical expansions to $L_\exists$-structures.

\begin{lem}
Any restricted quantifier-free $L_\exists$-formula is equivalent to both an $\forall_2$ $L$-formula and a $\exists_2$ $L$-formula.
\end{lem}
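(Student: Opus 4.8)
The plan is to push the existential quantifiers hidden inside the predicates $P_\varphi$ to the front of the formula, one occurrence at a time, keeping track of how each one flips between $\inf$ and $\sup$ as it crosses the connectives, and then to reorder the resulting quantifier prefix in two different ways. Recall that an atomic $L_\exists$-formula is either an atomic $L$-formula (which is quantifier-free) or a predicate $P_\varphi(\bar x)$, where $\varphi$ is an existential $L$-formula; writing $\varphi(\bar x)=\inf_{\bar w}\psi(\bar x,\bar w)$ with $\psi$ quantifier-free, the symbol $P_\varphi$ is interpreted by $\varphi$ in the canonical $L_\exists$-expansion of any $L$-structure, so it suffices to work over $L$-structures. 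A restricted quantifier-free $L_\exists$-formula $\Theta(\bar x)$ has the form $F(\beta_1,\dots,\beta_k)$, where the $\beta_i$ are atomic $L_\exists$-formulas and $F$ is a term, i.e.\ a finite tree whose internal nodes are the basic restricted connectives and whose leaves are labelled by the $\beta_i$ (a single $\beta_i$ may label several leaves). For the usual generating systems, each basic restricted connective is monotone --- non-increasing or non-decreasing --- in each of its arguments, and we fix such a system; note that the composite function $F$ itself need not be monotone, but only the nodes of the tree will be used.

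First I would rename variables so that each leaf occurrence $\ell$ labelled by a predicate $P_{\varphi_\ell}$, $\varphi_\ell=\inf_{\bar w_\ell}\psi_\ell$, uses its own fresh block $\bar w_\ell$, disjoint from $\bar x$ and from all the other such blocks; this does not change the value of $\Theta$. Then pull quantifiers out leaf by leaf, in an order to be chosen later. For a single leaf $\ell$, the block $\inf_{\bar w_\ell}$ can be pulled to the front of the formula: passing a tree node that is non-decreasing in the relevant argument leaves an $\inf_{\bar w_\ell}$ (since $g(\dots,\inf_{\bar w}h,\dots)=\inf_{\bar w}g(\dots,h,\dots)$ for continuous non-decreasing $g$, and likewise for $\sup$), while passing a node that is non-increasing turns it into $\sup_{\bar w_\ell}$, and vice versa. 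Hence $\ell$ contributes exactly one quantifier block $Q_\ell\,\bar w_\ell$ with $Q_\ell\in\{\inf,\sup\}$ determined by the parity of the number of non-increasing nodes on the tree-path from $\ell$ to the root of $F$ --- a quantity independent of the order in which leaves are processed, since substituting the quantifier-free kernels of already-processed leaves does not alter the tree $F$. Processing the leaves in an order $\ell_1,\dots,\ell_N$ therefore yields $\Theta\equiv Q_{\ell_1}\bar w_{\ell_1}\cdots Q_{\ell_N}\bar w_{\ell_N}\,\theta$, where $\theta$ is the quantifier-free $L$-formula obtained from $F$ by replacing each leaf by its quantifier-free kernel ($\psi_\ell$ for a $P_{\varphi_\ell}$-leaf, the atomic $L$-formula itself otherwise); crucially, no $\inf$ is ever commuted past a $\sup$, the blocks being simply stacked in the order the leaves are processed.

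Finally I would choose the processing order twice. Listing first all leaves $\ell$ with $Q_\ell=\inf$ and then all leaves with $Q_\ell=\sup$, and merging consecutive like quantifiers, gives $\Theta\equiv\inf_{\bar u}\sup_{\bar v}\theta$, an $\exists_2$ $L$-formula; listing the $\sup$-leaves first gives $\Theta\equiv\sup_{\bar v}\inf_{\bar u}\theta$, a $\forall_2$ $L$-formula. (If $\bar u$ or $\bar v$ ends up empty --- in particular if $\Theta$ has no $P_\varphi$-leaf at all, so that $\Theta$ is already a quantifier-free $L$-formula --- insert a dummy block of variables not occurring in $\theta$.) I expect the only real obstacle to be making precise and correct the two points on which the reordering rests: that each atomic $L_\exists$-leaf contributes exactly one quantifier block, and that these blocks may consequently be arranged in the prefix in any desired order without ever invoking the (false) identity $\sup\inf=\inf\sup$; everything else is the elementary interaction of $\sup$, $\inf$, and continuous monotone functions, together with the standard choice of a monotone generating system of restricted connectives.
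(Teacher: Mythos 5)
Your proof is correct, and it takes a somewhat different route from the paper's. The paper argues by induction on the complexity of the restricted formula, carrying the $\forall_2$ and the $\exists_2$ forms simultaneously; the only nontrivial step is the connective $\dminus$, which is handled by the two displayed identities, and those identities silently use the fact that a $\sup$ and an $\inf$ binding variables occurring in different arguments of $\dminus$ may be interchanged (legitimate because $\dminus$ is continuous and monotone in each argument and the variables are separated, so both sides compute the same extremal value). You instead perform one global prenex extraction over the connective tree: each $P_\varphi$-leaf contributes a single quantifier block whose polarity is the parity of the antitone argument-passages along its path to the root (your phrase ``non-increasing nodes'' should really refer to argument positions, since a node can be monotone in one argument and antitone in another, but you do use it this way), and since the extraction order is free you may list all $\sup$-blocks first to get the $\forall_2$ form or all $\inf$-blocks first to get the $\exists_2$ form, so no interchange of $\inf$ with $\sup$ is ever invoked. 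Both arguments rest on the same two facts---continuity plus coordinatewise monotonicity of the restricted connectives, and disjointness of the hidden variable blocks---but the paper's induction is shorter and parallels the inductive definition of restricted formulas, while your one-pass prenexification makes explicit why no minimax-type interchange is needed. Your caveat about fixing a monotone generating system is harmless in this setting, since the restricted connectives of \cite{BBHU08} ($0$, $1$, $x/2$, $\dminus$) are all monotone in each argument.
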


\begin{proof}
We prove the lemma by induction on the complexity of formulae.  The main case is the connective $\dminus$.  This follows from the following calculations:
\begin{itemize}
    \item $(\sup_x \inf_y \varphi)\dminus (\inf_z\sup_w\psi)\equiv \sup_x\sup_z\inf_y\inf_w (\varphi\dminus \psi)$ and
    \item $(\inf_x\sup_y \varphi)\dminus (\sup_z\inf_w\psi)\equiv \inf_x\inf_z\sup_y\sup_w (\varphi\dminus \psi).$\qedhere
\end{itemize}
\end{proof}

In what follows, given an $L$-structure $N$, $\Th_{\forall_n}(N)$ denotes the closed conditions of the form $\sigma=0$, where $\sigma$ is a $\forall_n$-sentence for which $\sigma^N=0$.  Similarly, if $T$ is a theory, we let $T_{\forall_n}$ denote the collection of closed conditions $\sigma=0$, where $\sigma$ is a $\forall_n$-sentence for which $T\models \sigma=0$.  The corresponding notions with $\forall_n$ replaced by $\exists_n$ are defined analogously.  It is routine to verify that $M\models \Th_{\forall_n}(N)$ if and only if $N\models \Th_{\exists_n}(M)$.

\begin{thm}\label{keisler1}
For $L$-structures $M$ and $N$, the following are equivalent:
\begin{enumerate}
    \item $M\models \Th_{\forall_n}(N)$;.
    \item There is an elementary extension $N'$ of $N$ for which there is an $(M,N',n)$-ultrapower sandwich.
\end{enumerate}
\end{thm}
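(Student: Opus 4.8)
The plan is to prove Theorem~\ref{keisler1} by induction on $n$, and it is convenient to prove the formally stronger (but in fact equivalent) version in which the elementary extension $N'$ in (2) is taken to be an \emph{ultrapower} of $N$; this strengthening is what lets the recursion close. For $n=1$ an $(M,N',1)$-ultrapower sandwich is just an embedding $M\hookrightarrow N'$ with $N'\succeq N$, so (2)$\Rightarrow$(1) holds because universal sentences descend to substructures and $\Th_{\forall_1}(N)=\Th_{\forall_1}(N')$, while (1)$\Rightarrow$(2) is the standard fact — a routine diagram-and-compactness argument, cf.\ \cite{BBHU08} — that $M\models\Th_\forall(N)$ implies $M$ embeds into an ultrapower of $N$.

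Assume the theorem for $n$ and prove it for $n+1$. The direction (2)$\Rightarrow$(1) is direct and does not use the inductive hypothesis. Given an $(M,N',n+1)$-ultrapower sandwich $M\hookrightarrow N'\hookrightarrow M^{\u_1}\hookrightarrow N'^{\u_2}\hookrightarrow\cdots$ and a $\forall_{n+1}$-sentence $\sigma=\sup_{\bar x_1}\inf_{\bar x_2}\cdots$ with $\sigma^{N'}=0$ (note $\Th_{\forall_{n+1}}(N)=\Th_{\forall_{n+1}}(N')$ since $N\preceq N'$), one shows $\sigma^M\le\sigma^{N'}$ by transferring the evaluation of the $n+1$ alternating quantifiers one stage at a time along the chain: at each $\sup$-quantifier one enlarges to the next structure and uses monotonicity of $\sup$ under inclusion, at each $\inf$-quantifier one restricts to the next structure and uses monotonicity of $\inf$, and at each passage between an $M$-stage and an $N'$-stage one re-baselines using that the diagonal embeddings are elementary. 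A chain of length $n+1$ is exactly what is needed to accommodate all $n+1$ quantifiers; since values are nonnegative, $\sigma^M\le\sigma^{N'}=0$ forces $\sigma^M=0$.

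For (1)$\Rightarrow$(2) I would argue as follows. Suppose $M\models\Th_{\forall_{n+1}}(N)$, i.e.\ $N\models\Th_{\exists_{n+1}}(M)$. Since any $\forall_n$-sentence and any $\exists_2$-sentence is, after inserting dummy quantifiers, an $\exists_{n+1}$-sentence, we get $N\models\Th_{\forall_n}(M)$ and, crucially, that every $\exists_2$-sentence with value $0$ in $M$ has value $0$ in $N$. Name the elements of $M$ by constants $(d_a)$ and set
\[
\Sigma:=\mathrm{ElemDiag}(N)\ \cup\ \mathrm{Diag}(M)\ \cup\ \{\,(r_\varphi\dminus\varphi)=0:\varphi\text{ existential in the }d_a,\ r_\varphi\text{ its value in }M\,\}.
\]
A model of $\Sigma$ is an elementary extension of $N$ containing a copy of $M$ in which $\varphi\ge r_\varphi$ for all existential $\varphi$, hence (with the reverse inequality from $M$ being a substructure) one in which $M$ is e.c. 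To check $\Sigma$ is satisfiable one verifies finite satisfiability: writing the existential sentences as $\inf_{\bar z}\chi_j(\bar z,\bar d)$ and using $r\dminus\inf_{\bar z}g=\sup_{\bar z}(r\dminus g)$, a finite fragment amounts to finding a tuple $\bar b$ in an extension of $N$ with $\Theta(\bar b)=0$, where $\Theta(\bar y)$ is the \emph{universal} formula equal to the maximum of the quantifier-free conditions from $\mathrm{Diag}(M)$ and the universal formulas $\sup_{\bar z}(r_j\dminus\chi_j(\bar z,\bar y))$; as $\Theta^M(\bar a)=0$, the $\exists_2$-sentence $\inf_{\bar y}\Theta(\bar y)$ has value $0$ in $M$, hence in $N$, so such a $\bar b$ exists up to the unavoidable $\varepsilon$'s of continuous satisfiability. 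By compactness $\Sigma$ has a model; passing to a sufficiently saturated ultrapower of $N$ into which that model embeds elementarily over $N$, one gets $N'\succeq N$ (itself an ultrapower of $N$) with $M\hookrightarrow N'$ and $M$ e.c. in $N'$, i.e.\ some $\u_1$ with the composite $M\hookrightarrow N'\hookrightarrow M^{\u_1}$ equal to the diagonal. Finally $N'\models\Th_{\forall_n}(M)=\Th_{\forall_n}(M^{\u_1})$, so the inductive hypothesis applied to the pair $(N',M^{\u_1})$ yields an ultrapower $(M^{\u_1})^{\mathcal W}=M^{\u_1\otimes\mathcal W}$ and an $(N',M^{\u_1\otimes\mathcal W},n)$-ultrapower sandwich $N'\hookrightarrow M^{\u_1\otimes\mathcal W}\hookrightarrow N'^{\u_2}\hookrightarrow\cdots$; replacing the witness of ``$M$ e.c. in $N'$'' by its composite with the diagonal $M^{\u_1}\hookrightarrow M^{\u_1\otimes\mathcal W}$ and concatenating produces an $(M,N',n+1)$-ultrapower sandwich.

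The step I expect to be the main obstacle is making this recursion literally conform to the definition of an ultrapower sandwich. The inductive hypothesis returns an elementary extension of $M^{\u_1}$, not $M^{\u_1}$ itself; one must know — this is precisely the reason for proving the strengthened form — that it may be taken to be an iterated ultrapower $M^{\u_1\otimes\mathcal W}$, absorb $\mathcal W$ into $\u_1$, re-check that ``$M$ e.c. in $N'$'' survives the relabeling, and verify all the diagonal-composition requirements together with the compatibilities $\u_{k+1}\cong\u_{k-1}\otimes\mathcal V$, which hold because the later ultrapowers are by construction iterated ultrapowers of the earlier ones. Consistently taking good/regular ultrafilters and sufficiently saturated ultrapowers at each stage is what makes this bookkeeping go through; the remaining continuous-logic subtleties around approximate satisfaction in the compactness step are routine.
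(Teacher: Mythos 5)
Your base case, your direction (2)$\Rightarrow$(1), and your compactness construction of an elementary extension $N'\succeq N$ containing $M$ as an e.c.\ substructure are all fine (the last is essentially the $n=2$ case of the hard direction). The gap is in the concatenation that is supposed to close the induction. The inductive hypothesis applied to the pair $(N',M^{\mathcal U_1})$ produces an elementary extension $B'$ of $M^{\mathcal U_1}$ (even granting your strengthening, $B'=M^{\mathcal U_1\otimes\mathcal W}$ with the diagonal inclusion of $M^{\mathcal U_1}$) together with an $(N',B',n)$-sandwich $N'\xrightarrow{f_1}B'\xrightarrow{f_2}N'^{\mathcal V_1}\hookrightarrow\cdots$; but $f_1$ is produced afresh by that hypothesis and bears no relation to the witness $\iota\colon N'\to M^{\mathcal U_1}$ of ``$M$ e.c.\ in $N'$'' that you fixed earlier, nor to the embedding $M\to N'$. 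If, as you propose, you replace $f_1$ by $j:=(\text{diagonal})\circ\iota$, then the composite $M\to N'\xrightarrow{j}M^{\mathcal U_1\otimes\mathcal W}$ is indeed diagonal, but the next requirement — that $N'\xrightarrow{j}M^{\mathcal U_1\otimes\mathcal W}\xrightarrow{f_2}N'^{\mathcal V_1}$ be the diagonal embedding of $N'$ — is known only for $f_2\circ f_1$, not for $f_2\circ j$; if instead you keep $f_1$, then the requirement that $M\to N'\xrightarrow{f_1}M^{\mathcal U_1\otimes\mathcal W}$ be diagonal is unavailable. Since the theorem (even in your strengthened form) gives no control whatsoever over the embedding it returns, the diagonal-composition conditions of the concatenated chain are never verified. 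So the obstacle is not the one you flag (whether the extensions are literally iterated ultrapowers, which good ultrafilters do handle); it is a compatibility issue about the embeddings themselves, and in a general structure there is no uniqueness-of-embeddings principle to repair it after the fact.

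To make a direct induction of this shape work, one must carry the compatibility through the induction, e.g.\ by inducting on a statement with parameters (working over the diagrams of the structures already placed in the chain), as in Keisler's original argument. The paper instead sidesteps the issue by inducting on the language: add a predicate $P_\varphi$ for each existential $L$-formula, note that quantifier-free formulas of the expanded language $L_\exists$ are both $\forall_2$ and $\exists_2$ over $L$, so that $M\models\Th_{\forall_n}(N)$ over $L$ implies $M\models\Th_{\forall_{n-1}}(N)$ over $L_\exists$; the inductive hypothesis then hands over the \emph{entire} chain of length $n-1$ with all embeddings existential, already mutually compatible, and one only appends a single final ultrapower, which uses nothing more than the last embedding being existential. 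I recommend reorganizing your argument along one of these two lines rather than trying to patch the concatenation.
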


\begin{proof}
We prove this by induction on $n$.  The case $n=1$ is well-known.

Now suppose that $n>1$.  If $S$ is an $n$-ultrapower sandwich, let $S'$ denote the sandwich with the last element removed.

First suppose that there is an $(M,N',n)$-ultrapower sandwich $S$ with $N'$ an elementary extension of $N$.  Then $S$ is an $(M,N',n-1)$-ultrapower sandwich with respect to $L_\exists$.  By induction, we have that $M$ is a model of $\Th_{\forall_{n-1}}(N)$ with respect to the language $L_\exists$.  It is clear that $\Th_{\forall_n}(N)$ with respect to $L$ is contained in $\Th_{\forall_{n-1}}(N)$ with respect to $L_\exists$, so $M\models \Th_{\forall_n}(N)$, as desired.

Now suppose that $M\models \Th_{\forall_n}(N)$.  By the previous lemma, we have that $M\models \Th_{\forall_{n-1}}(N)$ with respect to $L_\exists$.  By induction, there is an elementary extension $N'$ of $N$ for which there is an $(M,N',n-1)$-ultrapower sandwich with respect to $L_\exists$, that is, the embeddings are all existential embeddings.  In particular, the chain can be extended by one more element if the embeddings are not required to be existential.  Thus, there is an $(M,N',n)$-ultrapower sandwich, as desired.
\end{proof}

We can now prove the above promised sandwich theorems:

\begin{cor}
For any $L$-theory $T$, the following are equivalent:
\begin{enumerate}
    \item $T$ is $\forall_n$-axiomatizable.
    \item Whenever there is an $(M,N,n)$-ultrapower sandwich with $N\models T$, we also have that $M\models T$.
\end{enumerate}
\end{cor}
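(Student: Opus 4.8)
The plan is to derive the corollary from Theorem~\ref{keisler1} together with the compactness theorem of continuous logic, following the classical template for sandwich-style results. Two routine facts are used throughout: first, $\forall_n$- and $\exists_n$-sentences are closed under $\max$ and under $t\mapsto\max(t-\delta,0)$ (truncation and subtracting a constant), because $\max(\,\cdot\,,0)$ commutes with both $\sup$ and $\inf$ and hence does not disturb the quantifier prefix; in particular, if $\sigma$ is an $\exists_n$-sentence and $\delta\ge 0$, then $\delta\dminus\sigma$ is a $\forall_n$-sentence. Second, $M\models\Th_{\forall_n}(N)$ if and only if $N\models\Th_{\exists_n}(M)$, as noted just before Theorem~\ref{keisler1}.

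For $(1)\Rightarrow(2)$: suppose $T$ has a $\forall_n$-axiomatization $T_0$ and we are handed an $(M,N,n)$-ultrapower sandwich with $N\models T$. Then $N\models T_0$, so every condition of $T_0$ lies in $\Th_{\forall_n}(N)$. Taking $N'=N$ in Theorem~\ref{keisler1} (legitimate, since $N$ is trivially an elementary extension of itself), the given sandwich witnesses clause~(2) of that theorem, so $M\models\Th_{\forall_n}(N)$; in particular $M\models T_0$, hence $M\models T$.

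For $(2)\Rightarrow(1)$: the candidate axiomatization is $T_{\forall_n}$. Only the inclusion of models of $T_{\forall_n}$ into models of $T$ needs proof, so fix $M\models T_{\forall_n}$. First I would show $T\cup\Th_{\exists_n}(M)$ is satisfiable: by compactness it suffices to handle a finite fragment, and after replacing finitely many $\exists_n$-conditions by their $\max$ this reduces to showing $T\cup\{\sigma=0\}$ is satisfiable for a single $\exists_n$-sentence $\sigma$ with $\sigma^M=0$. If it were not, compactness would yield $\delta>0$ with $\sigma^A\ge\delta$ for every $A\models T$, i.e.\ $T\models(\delta\dminus\sigma)=0$; since $\delta\dminus\sigma$ is a $\forall_n$-sentence, this condition belongs to $T_{\forall_n}$, so $M\models(\delta\dminus\sigma)=0$, forcing $\sigma^M\ge\delta>0$, contradicting $\sigma^M=0$. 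Now pick $N\models T\cup\Th_{\exists_n}(M)$; then $M\models\Th_{\forall_n}(N)$, so by Theorem~\ref{keisler1} there are an elementary extension $N'\succeq N$ and an $(M,N',n)$-ultrapower sandwich. Since $N'\equiv N\models T$ we have $N'\models T$, and applying hypothesis~(2) to this sandwich gives $M\models T$, as desired.

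The only step demanding care is the compactness argument inside $(2)\Rightarrow(1)$: one must make sure that negating satisfiability genuinely produces a \emph{$\forall_n$}-sentence lying in $T_{\forall_n}$, which is exactly the observation that subtracting a constant from an $\exists_n$-sentence and truncating at $0$ yields a $\forall_n$-sentence. Everything else is a direct appeal to Theorem~\ref{keisler1} and standard continuous-logic compactness.
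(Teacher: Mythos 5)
Your proof is correct and follows essentially the same route as the paper: direction $(1)\Rightarrow(2)$ is identical, and direction $(2)\Rightarrow(1)$ uses the same strategy of producing, via compactness and the fact that $M\models T_{\forall_n}$, a model $N\models T$ with $M\models \Th_{\forall_n}(N)$, and then invoking Theorem \ref{keisler1} together with hypothesis (2). The only (cosmetic) difference is bookkeeping: you realize $M\models\Th_{\forall_n}(N)$ by satisfying $T\cup\Th_{\exists_n}(M)$ and dualizing, whereas the paper works directly with the approximate $\forall_n$-conditions $\sigma\geq\epsilon/2$ for $\forall_n$-sentences $\sigma$ with $\sigma^M\geq\epsilon$.
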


\begin{proof}
First suppose that $T$ is $\forall_n$-axiomatizable and consider an $(M,N,n)$-ultrapower sandwich with $N\models T$.  Then by Theorem \ref{keisler1}, we have that $M\models \Th_{\forall_n}(N)$.  Since $N\models T$, we have that $T_{\forall_n}\subseteq \Th_{\forall_n}(N)$.  It follows that $M\models T_{\forall_n}$.  Since $T$ is $\forall_n$-axiomatizable, we have that $M\models T$, as desired.

Conversely, suppose that (2) holds and let $M\models T_{\forall_n}$.  We wish to show that $M\models T$.  Consider the set
$$\Sigma:=T\cup \{\sigma\geq \frac{\epsilon}{2} \ : \ \sigma^M\geq \epsilon, \ \sigma \text{ is a }\forall_n\text{-sentence}\}.$$

If $\Sigma$ were unsatisfiable, then there would be $\sigma_1,\ldots,\sigma_m$ and $\epsilon$ such that $\sigma_i^M\geq \epsilon$ for all $i$ and yet $T\models \max_{1\leq i\leq m}(\sigma_i\dminus \frac{\epsilon}{2})=0$.  Since this latter sentence is still $\forall_n$, we have that it belongs to $T_{\forall_n}$, contradicting the fact that $M\models T_{\forall_n}$.

Let $N\models \Sigma$.  Note then that $N\models T$.  Moreover, $M\models \Th_{\forall_n}(N)$.  Indeed, if $\sigma$ is a $\forall_n$ sentence such that $\sigma^N=0$, then $\sigma^M=0$, else there is $\epsilon>0$ such that $\sigma^M\geq \epsilon$, whence $\sigma^N\geq \frac{\epsilon}{2}$, a contradiction.

By Theorem \ref{keisler1}, there is an elementary extension $N'$ of $N$ for which there is an $(M,N',n)$-ultrapower sandwich.  By (2) and the fact that $N'\models T$, we have that $M\models T$, as desired.
\end{proof}

\begin{cor}
For any $L$-theory $T$, the following are equivalent:
\begin{enumerate}
    \item $T$ is $\exists_n$-axiomatizable.
    \item Whenever there is an $(M,N,n)$-ultrapower sandwich with $M\models T$, we also have that $N\models T$.
\end{enumerate}
\end{cor}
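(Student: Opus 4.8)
The plan is to run the argument from the proof of the preceding corollary with the roles of $\forall_n$ and $\exists_n$ interchanged, the two inputs being Theorem~\ref{keisler1} and the duality noted just before it, namely that $M\models\Th_{\forall_n}(N)$ if and only if $N\models\Th_{\exists_n}(M)$. Recall also that a theory $T$ is $\exists_n$-axiomatizable precisely when it is axiomatized by $T_{\exists_n}$, so that $(1)$ is equivalent to the assertion that every model of $T_{\exists_n}$ is a model of $T$.

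For $(1)\Rightarrow(2)$, suppose $T$ is $\exists_n$-axiomatizable and let there be an $(M,N,n)$-ultrapower sandwich with $M\models T$. Viewing $N$ as an elementary extension of itself, Theorem~\ref{keisler1} gives $M\models\Th_{\forall_n}(N)$, hence $N\models\Th_{\exists_n}(M)$ by the duality. Since $M\models T$, every condition of $T_{\exists_n}$ holds in $M$, so $T_{\exists_n}\subseteq\Th_{\exists_n}(M)$, whence $N\models T_{\exists_n}$ and therefore $N\models T$.

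For $(2)\Rightarrow(1)$, it suffices to show that every $M\models T_{\exists_n}$ satisfies $T$. Fix such an $M$ and set
$$\Sigma:=T\cup\{\sigma\geq\tfrac{\epsilon}{2}\ :\ \sigma\text{ an }\exists_n\text{-sentence with }\sigma^M\geq\epsilon\}.$$
If $\Sigma$ were unsatisfiable, then a compactness argument parallel to the one in the previous corollary (using that the minimum of finitely many $\exists_n$-sentences is, after inserting dummy variables, again $\exists_n$) would produce an $\exists_n$-sentence $\tau$ with $T\models\tau=0$ but $\tau^M>0$, contradicting $M\models T_{\exists_n}$. Hence $\Sigma$ has a model $N$. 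Then $N\models T$, and whenever $\sigma$ is an $\exists_n$-sentence with $\sigma^N=0$ we must have $\sigma^M=0$, for otherwise $\sigma\geq\tfrac{\epsilon}{2}\in\Sigma$ (with $\epsilon:=\sigma^M$) would force $\sigma^N\geq\tfrac{\epsilon}{2}$; thus $M\models\Th_{\exists_n}(N)$. By the duality, $N\models\Th_{\forall_n}(M)$, so Theorem~\ref{keisler1} provides an elementary extension $M'$ of $M$ together with an $(N,M',n)$-ultrapower sandwich. As $N\models T$, hypothesis $(2)$ yields $M'\models T$, and since $M\preceq M'$ we conclude $M\models T$.

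The one point that requires care is the final step of $(2)\Rightarrow(1)$: Theorem~\ref{keisler1} returns an $(N,M',n)$-sandwich whose terminal structure $M'$ is only an elementary extension of $M$, not $M$ itself, so the transfer $M'\models T\Rightarrow M\models T$ must go through elementarity rather than being read off directly from $(2)$. The remaining continuous-logic bookkeeping — the $\dminus$-conditions in the compactness step and the closure of the class of $\exists_n$-sentences under $\min$ — is routine and mirrors the argument already given for the $\forall_n$ case.
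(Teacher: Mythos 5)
Your proof is correct and follows essentially the same route as the paper's: both directions use Theorem \ref{keisler1} together with the duality between $\Th_{\forall_n}$ and $\Th_{\exists_n}$, the same compactness argument with the set $\Sigma$, and the same final passage from $M'\models T$ to $M\models T$ via elementarity. The only differences are cosmetic — you invoke the stated duality remark where the paper re-runs the $\epsilon\dotminus\sigma$ argument, and you spell out the compactness step that the paper compresses to ``the exact same argument as in the previous theorem.''
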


\begin{proof}
First suppose that $T$ is $\exists_n$-axiomatizable and consider an $(M,N,n)$-ultrapower sandwich with $M\models T$.  By Theorem \ref{keisler1}, we have that $M\models \Th_{\forall_n}(N)$.  Note then that $N\models \Th_{\exists_n}(M)$.  Indeed, if $\sigma$ is an $\exists_n$-sentence and $\sigma^M=0$, then $\sigma^N=0$, else $\sigma^N\geq \epsilon$.  Since $\epsilon\dminus \sigma$ is equivalent to a $\forall_n$-sentence, we get that $\epsilon\dminus \sigma\in \Th_{\forall_n}(N)$, whence $(\epsilon\dminus \sigma)^M=0$, contradicting that $\sigma^M=0$.  Since $T$ is $\exists_n$-axiomatizable, we have that $N\models T$, as desired.

Now suppose that (2) holds and $M\models T_{\exists_n}$.  We wish to show that $M\models T$.  The exact same argument as in the previous theorem shows that there is $N\models T$ such that $\sigma^N\geq \epsilon$ whenever $\sigma$ is a $\exists_n$-sentence with $\sigma^M\geq \epsilon$.  It follows that $M\models \Th_{\exists_n}(N)$.  Arguing as above, we have that $N\models \Th_{\forall_n}(M)$, so there is an $(N,M',n)$-ultrapower sandwich with $M'$ an elementary extension of $M$.  By (2) and the fact that $N\models T$, we have that $M'\models T$, whence $M\models T$, as desired.  
\end{proof}

\bibliographystyle{plain}

\end{document}